\def\normo#1{\left\|#1\right\|}
\def\norm#1{\|#1\|}
\newcommand{\R}{{\mathbb R}}
\newcommand{\Z}{{\mathbb Z}}
\newcommand{\X}{{\mathbb X}}
\newcommand{\les}{{\lesssim}}
\def\norm#1{\|#1\|}
\def\normo#1{\left\|#1\right\|}
\newcommand{\p}{\partial}
\newcommand{\EQ}[1]{\begin{equation}\begin{split} #1 \end{split}\end{equation}}
\newcommand{\EQN}[1]{\begin{equation*}\begin{split} #1 \end{split}\end{equation*}}
\newcommand{\Del}[1]{}
\numberwithin{equation}{section}
\newtheorem{theorem}{Theorem}[section]
\newtheorem{lemma}[theorem]{Lemma}
\newtheorem{corollary}[theorem]{Corollary}
\theoremstyle{remark}
\theoremstyle{definition}
\newtheorem{definition}[theorem]{Definition}
\newtheorem{thm}{Theorem}[section]
\newtheorem{prop}[thm]{Proposition}
\newtheorem{rem}[thm]{Remark}
\numberwithin{equation}{section}\allowdisplaybreaks
\def\leq{\leqslant}
\def\leq{\leqslant}
\def\geq{\geqslant}
\def\no{\nonumber}
\def\Z{{\mathbb{Z}}}
\begin{document}

\title{
On the well-posedness of the compressible Navier-Stokes equations
}
\author[Z. Guo]{Zihua Guo}
\address{School of Mathematics, Monash University, Clayton VIC 3800, Australia}
\email{zihua.guo@monash.edu}
\author[M. Yang]{Minghua Yang}
\address{School of Information Management and Mathematics, Jiangxi University of Finance and
Economics, Nanchang, 330032, China}
\email{minghuayang@jxufe.edu.cn}
\author[Z. Zhang]{Zeng Zhang}
\address{School of Mathematics and statistics, Wuhan University of Technology, Wuhan, 430070, China}
\email{zhangzeng534534@whut.edu.cn}
\thispagestyle{empty}

\begin{abstract}
We consider the Cauchy problem to the barotropic compressible Navier-Stokes equations. We obtain optimal local well-posedness in the sense of Hadamard in the critical Besov space $\X_p=\dot{B}_{p,1}^{d/p}\times \dot{B}_{p,1}^{-1+d/p}$ for $1\leq p<2d$ with $d\geq2$. The main new result is the continuity of the solution map from $\X_p$ to $C([0,T]: \X_p)$. In the previous works \cite{D2001, D2005, D2014}, the only known continuity of the solution map was from $\X^{-1}_p$ to $C([0,T]: \X^{-1}_p)$ for $1\leq p<d$ as a direct consequence of the uniqueness argument. 

To prove our results, we first extend the method of frequency envelope (see \cite{Tao04}) to the transport-parabolic setting. By this we obtained enhanced uniform estimates, in particular the uniform high-frequency smallness, for the sequence of smooth approximating solutions. Then we use the Lagrangian approach for the compressible Navier-Stokes equations (see \cite{D2014}) to derive a new difference estimate for the velocity in $L_t^1L_x^\infty$, which allows us to control the difference for low-frequency.  As a by-product of our results, the Lagrangian transform $(a,u)\to (\bar a, \bar u)=(a\circ X, u\circ X)$ used in \cite{D2014} is a continuous bijection and hence bridges the Eulerian and Lagrangian methods.
\end{abstract}

\subjclass[2020]{35Q30, 35B35}
\keywords{Compressible Navier-Stokes equations, Critical spaces, Well-posedness, Continuous dependence}

\maketitle

\tableofcontents

\section{Introduction} \label{intro}

In this paper, we consider the Cauchy problem to the compressible Navier-Stokes equations, which describe the motion of a barotropic fluid in the whole space $\mathbb{R}^d$ for $d \geq 2$:
\begin{equation}\label{E1.1}
\left\{
\begin{aligned}
& \partial_{t}\rho+{\rm div}\, (\rho u)=0,\quad  t>0,\,  x\in \mathbb{R}^{d},\\
&  \partial_{t}(\rho u)+{\rm div}\, (\rho u\otimes u)+\nabla P(\rho)=\mathcal{A}u,\quad  t>0,\,  x\in \mathbb{R}^{d},\\
& \rho(0,x)=\rho_{0}(x),\,   u(0,x)=u_{0}(x),\quad x\in \mathbb{R}^{d}.  \\
\end{aligned}
\right.
\end{equation}
Here, the two unknown functions $\rho(t,x)$ and $u(t,x)$ represent the fluid's density and velocity field, respectively. $P(\rho)$ represents the pressure of the fluid that depends on the density and $\mathcal{A} = \mu\Delta + (\mu + \lambda)\nabla \mathrm{div}$ is the Lamé operator, representing the viscosity, where the constants $\mu$ and $\lambda$ satisfy the elliptic conditions $\mu > 0$ and $2\mu +\lambda > 0$.

We assume that the fluid density $\rho$ is a small perturbation near $1$. Let $a=\rho-1$. To simplify our presentation, we assume that $$\mu=P^{\prime}(1)=1,  \,I(a) = \frac{a}{1 + a}, \,G'(a) = \frac{P'(1+ a)}{1+ a},$$
which allows us to rewrite \eqref{E1.1} as:
\begin{equation}\label{eulercauchy}
\left\{
\begin{aligned}
& a_t+u\cdot \nabla a=-(1+a){\rm div}\, u,\,  t>0,\,  x\in \mathbb{R}^{d},\\
&  \partial_{t}u-\mathcal{A}u=-u\cdot \nabla u-I(a)\mathcal{A}u-\nabla G(a),\,  t>0,\,  x\in \mathbb{R}^{d},
\\
& a(0,x)=a_{0}(x),\,   u(0,x)=u_{0}(x),\, x\in \mathbb{R}^{d}.  \\
\end{aligned}
\right.
\end{equation}
The Cauchy problem \eqref{eulercauchy} is of fundamental importance and has been extensively studied. We refer to \cite{PL1998, RH2015} and the references therein for the detailed introductions.  

We are interested in the well-posedness of \eqref{eulercauchy} in the sense of Hadamard.  First we recall the standard definition. 

\begin{definition}[Well-posedness]  
The Cauchy problem \eqref{eulercauchy} is said to be locally well-posed in a metric space $\X$ if for any $(a_{01},u_{01})\in \X$, there exists a neighbourhood $U$ of $(a_{01},u_{01})$ and $T=T(U)>0$ such that for any $(a_0,u_0)\in U$ we have
\begin{itemize}
\item[(i)] ({\bf Existence}): There exists a distributional solution $(a,u)\in E\subset C([0,T):\X)$ to \eqref{eulercauchy} for some set $E$.
\item[(ii)] ({\bf Uniqueness}): The solution $(a,u)$ is unique in $E$.
\item[(iii)] ({\bf Continuity}): The solution map $(a_0,u_0)\to (a,u):=S_T(a_0,u_0)$ is continuous from $U$ to $C([0,T):\X)$. 
\end{itemize} 
If the above holds for any $T>0$, then we say \eqref{eulercauchy} is globally well-posed in $\X$.
\end{definition}

\begin{rem}
The three properties are important ingredients of well-posedness in the sense of Hadamard.
For the continuity of the solution map, one may impose some weak version, e.g. estimate the difference in weaker topology. Here we employ the strong version, namely the space for the difference is the same as the initial data, so that the equation defines a continuous flow. In \cite{Tao2}, in order to extend the solution beyond blowup, Tao introduced a notion of semi-strong class solutions so that for the mass-critical Schr\"odinger equation one has existence, uniqueness and some weak continuous dependence (in this notion continuous dependence fails).
Here we give another example which shows that one has global existence, uniqueness, and even continuity in time, but the solution map fails to be continuous. Consider the equation
\EQ{\label{eq:ODE}
\partial_t \hat u(\xi,t)=iV(|\hat u(\xi,t)|^2)\hat u(\xi,t)
}
where $V:\R\to \R$, in $L^\infty$ but not continuous. $\hat u$ is Fourier transform of $u$. 
Then we see $|\hat u(\xi,t)|^2$ is preserved under the flow. Thus we can solve \eqref{eq:ODE} explicitly and get
\[\hat u=e^{it V(|\hat u(\xi,0)|^2)}\hat {u}(\xi,0).\]
We have global existence and uniqueness in $H^s$, even $u\in C_t H^s$. However, the solution map $u(x,0)\to u$ is not continuous from $H^s$ to $C_tL^2$.
\end{rem}

In seeking local or global-in-time results, scaling invariance has emerged as a crucial consideration. This approach can be traced back to the pioneering work of Fujita and Kato \cite{FK1964} on the classical incompressible Navier-Stokes equations. For barotropic fluids, it can be verified that under the scaling transformation
\begin{equation}\label{critical}
\left\{
\begin{aligned}
&(a_{0}, u_{0})\rightsquigarrow(a_{0}(\iota x), \iota u_{0}(\iota x)),\\
&  (a(t,x), u(t,x))\rightsquigarrow(a(\iota^{2}t, \iota x), \iota u(\iota^{2}t, \iota x)), \iota>0,\\
\end{aligned}
\right.
\end{equation}
\eqref{eulercauchy} is invariant, provided that the pressure term has been changed accordingly. A natural choice of $\X$ which is scaling invariant for \eqref{eulercauchy} is the critical Besov space
\begin{equation}\label{criticalspace}
(a_{0},\, u_{0})\in \X_p=\dot{B}_{p,1}^{\frac{d}{p}}\times\dot{B}_{p,1}^{-1+\frac{d}{p}}.
\end{equation}
We also denote $\X_p^s=\dot{B}_{p,1}^{s+\frac{d}{p}}\times\dot{B}_{p,1}^{-1+s+\frac{d}{p}}.$
There are much literature studying \eqref{eulercauchy} with data in $\X_p$. 
In \cite{D2001,D2005}, Danchin established local existence for $1\leq p<2d$ and uniqueness for $1\leq p\leq d$. The uniqueness for $d<p<2d$ was not clear until \cite{D2014} where Danchin developed a Lagrangian approach for \eqref{eulercauchy}. After performing a Lagrangian transform $(\bar a,\bar u)=(a\circ X, u\circ X)$, where
\EQ{
X(t,x)=x+\int_0^t u(\tau,X(\tau,x))\mathrm{d}\tau.
}
Danchin \cite{D2014} showed that the new equation for $(\bar a, \bar u)$ can be handled using the contraction mapping principle. 

On the other hand, the range $p<2d$ is optimal for well-posedness in $\X_p$. Chen-Miao-Zhang \cite{CMZ2015} proved the ill-posedness in the sense that the continuity of the solution map $S_T$ fails at the origin in $\X_p$ when $p>2d$. Iwabuchi-Ogawa \cite{IO2022} proved the same result for $p=2d$.  In view of these results, it is natural to ask whether the continuity of the solution map $S_T$ holds in $\X_p$ for $1\leq p<2d$ and thus completes the theory of well-posedness in $\X_p$ in the sense of Hadamard. To the best of our knowledge, this was not addressed in the previous works.  Only some weak continuous dependence results were known as a by-product of the uniqueness argument. For example, in \cite{D2001, D2005} and  Proposition~\ref{diff-1-local} in appendix ~\ref{proofapelwp}, it was proved that for $1\leq p<d$
\begin{equation}\label{eq:diffest}
\begin{aligned}
\|S_T(a_{01},u_{01})-S_T(a_{02},u_{02})\|_{C([0,T]: \mathbb{X}_p^{-1})}
\lesssim\|(a_{01}-a_{02},u_{01}-u_{02})\|_{\mathbb{X}_p^{-1}}.
\end{aligned}
\end{equation}
For the case $p \geq d$, it is not known whether the above difference estimate holds true since the regularity of $u$ is too negative (see Remark \ref{remark1}).  
The first equation of \eqref{eulercauchy} is a transport equation.  It is thus natural to estimate the difference in $\X_p^{-1}$, namely lower-regularity space as one does not expect Lipschitz continuity of $S_T$ to hold in $\X_p$. 

It seems to us that the Lagrangian approach of \cite{D2014} could not address the continuity in the Eulerian coordinate either, although it can prove its uniqueness. The main reason is that although we can have mutual control between $(a,u)$ and $(\bar a, \bar u)$, we could not get the difference estimate in $\X_p$ from the new variables $(\bar a, \bar u)$. Therefore, in the Eulerian coordinate, the only known results concerning the continuous dependence is the weak continuity \eqref{eq:diffest} that is restricted to $1\leq p<d$.

The purpose of this paper is to address the continuity of the solution map $S_T$ in $\X_p$ for $1\leq p<2d$ by introducing a new approach. We believe our approach may be applicable to many other problems in the field. Our main results are

\begin{theorem} [Local well-posedness]\label{lwp}
Assume $d\geq 2$ and $1 \leq p <2d$. Let $c>0$ be small enough (depending on $p, d$). Then for any $(a_{0}, u_{0})\in \X_p$ with $\|a_{0}\|_{\dot{B}_{p,1}^{{d/p}}}\leq c$, there exists a neighbourhood $U$ of $(a_{0}, u_{0})$ in $\X_p$ and $T=T(U)>0$, such that for any data $(\tilde{a}_{0}, \tilde{u}_{0}) \in U$, the Cauchy problem \eqref{eulercauchy} has a unique solution
\EQN{
(\tilde{a}, \tilde{u}):=S_{T}(\tilde{a}_0,\tilde{u}_0)\in Z_p(T):=C([0,T];\dot{B}_{p,1}^{\frac{d}{p}})\times \big(C([0,T];\dot{B}_{p,1}^{-1+\frac{d}{p}})\cap L^{1}([0,T]; \dot{B}_{p,1}^{1+\frac{d}{p}})\big).
}
Moreover, the solution map $S_T$ is continuous from $U$ to $Z_p(T)$.
\end{theorem}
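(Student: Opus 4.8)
The plan is to isolate the genuinely new content---continuity of $S_T$---and to treat existence and uniqueness as already established in \cite{D2001,D2005,D2014}. First I would fix $(a_0,u_0)$ with $\|a_0\|_{\dot B_{p,1}^{d/p}}\le c$ and, shrinking a neighbourhood $U$ if necessary, produce a common lifespan $T=T(U)>0$ together with a uniform bound $\sup_{(\tilde a_0,\tilde u_0)\in U}\|S_T(\tilde a_0,\tilde u_0)\|_{Z_p(T)}\le M$. The smallness of $a_0$ keeps $1+a$ bounded away from $0$, so that $I(a)=a/(1+a)$ and $G(a)$ remain tame and the $u$-equation stays uniformly parabolic; the uniform lifespan then follows from the a priori estimates behind the existence theory together with continuity of the blow-up time in the data.

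The continuity statement to prove is that $(a_{0,n},u_{0,n})\to(a_0,u_0)$ in $\X_p$ implies $(a_n,u_n):=S_T(a_{0,n},u_{0,n})\to(a,u):=S_T(a_0,u_0)$ in $Z_p(T)$. I would attack it by a frequency-envelope argument in the spirit of \cite{Tao04}, adapted to the transport--parabolic structure of \eqref{eulercauchy}. For the limit datum choose a slowly varying envelope $(c_j)_{j\in\Z}$ with $\sum_j c_j\simeq\|(a_0,u_0)\|_{\X_p}$ that dominates its Littlewood--Paley pieces, and similarly for each $(a_{0,n},u_{0,n})$. The first key step is a propagation-of-frequency-envelope bound: $\|\dot\Delta_j a\|_{L^\infty_TL^p}$ and the corresponding parabolic norms of $\dot\Delta_j u$ should be controlled by $c_j$, uniformly in $n$. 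For the parabolic $u$-equation this rests on frequency-localized heat-semigroup (Schauder) estimates; for the transport $a$-equation it rests on commutator estimates $\|[\,u\cdot\na,\dot\Delta_j]a\|_{L^p}$ compatible with the slowly varying property of $(c_j)$. Because the bare transport operator offers no smoothing, I expect this step to be cleanest in Lagrangian variables, where $\bar a=a\circ X$ obeys a genuine ODE in time along particle paths: one propagates the envelope for $(\bar a,\bar u)$ and then transfers back, using that the Lagrangian map $(a,u)\mapsto(\bar a,\bar u)$ is a continuous bijection on the relevant spaces---precisely the by-product flagged in the abstract.

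The second key step is a low-regularity difference estimate. For $1\le p<d$ I would simply invoke \eqref{eq:diffest}, controlling $(a_n-a,u_n-u)$ in $C([0,T]:\X_p^{-1})$ by the datum difference, which tends to $0$. For $d\le p<2d$ this route fails because $u$ sits at negative regularity, and here the new ingredient enters: I would establish a difference estimate for the velocity in $L^1_TL^\infty_x$ (equivalently for the flow), exploiting that the parabolic smoothing places $\delta u:=u_n-u$ in $\widetilde{L}^1_T\dot B_{p,1}^{1+d/p}\hookrightarrow L^1_TL^\infty_x$ and thereby sidesteps the negative Besov indices. The estimate closes by absorbing a small multiple of $\|\delta u\|_{L^1_TL^\infty_x}$ into the left-hand side via the smallness of $a$ and of $T$. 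Since the transport structure of the $a$-equation renders $a_n-a$ controllable by the difference of flows $X_n-X$, and the latter by $\|\delta u\|_{L^1_TL^\infty_x}$, this yields a weak difference bound that vanishes with the data.

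Finally I would assemble the pieces in the usual frequency-envelope fashion. Splitting $\|(a_n-a,u_n-u)\|_{\X_p}$ into a low-frequency part (modes $\le 2^N$) and a high-frequency tail (modes $>2^N$), the tail is $\les\sum_{|j|>N}c_j$ uniformly in $n$ by the propagation step, hence arbitrarily small for $N$ large; the finitely many low-frequency pieces tend to $0$ as $n\to\infty$ by interpolating the weak difference bound against the uniform control $M$ in $Z_p(T)$. Letting $n\to\infty$ and then $N\to\infty$ gives convergence in $Z_p(T)$. I expect the main obstacle to be the envelope-propagation step for the coupled system: arranging that the commutator estimate and the composition estimates for $I(a)\mathcal{A}u$ and $\na G(a)$ respect the slowly varying envelope, and proving that the Eulerian-to-Lagrangian change of variables is bicontinuous on $Z_p(T)$, so that an envelope propagated in Lagrangian coordinates can be pulled back to the Eulerian solution without loss.
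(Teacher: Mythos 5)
Your overall architecture (uniform bounds, a frequency-tail estimate via envelope propagation, a weak difference estimate, and a low/high frequency splitting) matches the paper's three-step scheme, but two of your key steps would not go through as described. First, you propose to propagate the frequency envelope in Lagrangian variables and then ``pull back''. This fails for the same reason the paper says the Lagrangian approach cannot directly yield Eulerian continuity: composition with the flow $X$ preserves Besov \emph{norms} but destroys frequency localization, so a tail bound $\|P_{>N}\bar a\|$ small does not transfer to a tail bound on $a=\bar a\circ X^{-1}$. The paper instead propagates the envelope entirely in Eulerian coordinates, by proving \emph{weighted} versions of the transport estimate (with a commutator bound $\|[v\cdot\nabla,\dot\Delta_j]a\|_{L^p}$ compatible with a slowly varying weight $\omega_{i+1}\le 2^{\delta_0}\omega_i$), of the paraproduct/composition estimates, and of maximal regularity for the Lam\'e system; the transport equation needs no smoothing here, only the fact that the commutator moves frequencies by $O(1)$. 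Your worry about the ``bare transport operator'' is a red herring --- the resolution is the weighted commutator estimate, not a change of coordinates.

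Second, and more seriously, your final assembly step does not close for $d\le p<2d$. In that range you have \emph{no} difference bound on $\delta a=a_n-a$ in any topology (this is exactly why \eqref{eq:diffest} is restricted to $p<d$), so ``interpolating the weak difference bound against the uniform control $M$'' cannot make the low-frequency pieces of $\delta a$ tend to zero: $\|\delta u\|_{L^1_TL^\infty}\to0$ carries no information about $\dot\Delta_j\delta a$ by interpolation alone, and your claim that $a_n-a$ is ``controllable by the difference of flows'' is false in $\dot B^{d/p}_{p,1}$ (composition differences lose a derivative). What is actually needed is a genuine PDE estimate for the low-frequency part of the full difference system: one re-runs the transport and Lam\'e estimates restricted to frequencies $\le 2^{N}$, observes that the only term that cannot be handled at the natural regularity is $\dot T_{\delta u}\nabla a_1$, bounds it at low frequency by $2^{N}\|\delta u\|_{L^1_TL^\infty}\|a_1\|_{L^\infty_T\dot B^{d/p}_{p,1}}$, dumps the remaining bad contributions into high-frequency tails (small by the envelope step), and closes with Gronwall. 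Relatedly, your derivation of the $L^1_TL^\infty$ bound on $\delta u$ by ``absorbing a small multiple into the left-hand side'' in Eulerian variables runs into the same coupling obstruction through $\delta a$; the paper obtains it instead from the Lipschitz well-posedness of the \emph{Lagrangian} system (Danchin's theorem), interpolation $\|\bar u^n-\bar u\|_{L^1_TL^\infty}\lesssim T^{1/2}\|\bar u^n-\bar u\|_{L^\infty_T\dot B^{d/p}_{p,1}}^{1/2}\|\bar u^n-\bar u\|_{L^1_T\dot B^{1+d/p}_{p,1}}^{1/2}$, and a transfer back to Eulerian coordinates using only $L^\infty$ bounds on the flow difference, which is legitimate at that (zero-derivative) level.
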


\begin{rem}
The novelty of the above theorem is the continuity of $S_T$. 
A direct consequence is that the Lagrangian transform $(a,u)\to (\bar a,\bar u)=(a\circ X, u\circ X)$ is a continuous bijection from $C([0,T]:\X_p)$ to $C([0,T]:\X_p)$.
For simplicity, we only prove the continuity assuming that $\|a_{0}\|_{\dot{B}_{p,1}^{{d/p}}}$ sufficiently small. It seems to us that the smallness condition may be replaced by the positivity condition $\inf_{x\in \mathbb{R}^{d}} (1+a_{0})>0$ if combining our approach and the methods from \cite{CMZ2010R, D2007, D2014}. We do not pursue it in this paper. 
\end{rem}

\begin{rem}
In \cite{D2000}, Danchin established the global existence and uniqueness of \eqref{eulercauchy} with small data in $\X_2$ under some additional assumptions on the low frequency of $a_0$. Later, Charve-Danchin \cite{FD2010}, Chen-Miao-Zhang
\cite{CMZ2010} and Haspot \cite{H2011} obtained the global existence and uniqueness in some hybrid Besov spaces related to $\X_p$ for certain range of $p$. In a recent work \cite{GSY},  the first and second named authors together with Song proved global well-posedness in the optimal Besov space under some additional assumption on the low frequency of the density and momentum.  
\end{rem}

In the rest of the introduction, we would like to describe our approach to prove the continuity. Since we do not expect Lipschitz continuity, we have to use some non-perturbative methods. We are aware of two methods:

{\bf Bona-Smith method.} This method was introduced by Bona-Smith in \cite{Bona-Smith} for the KdV equation and was widely used in dispersive PDE.  It was used in \cite{GuLi21, GuLiYi18} for the incompressible Euler equations, and some continuity was proved in some critical Besov and Triebel-Lizorkin spaces. The main ingredients are 1) uniform boundedness; 2) the difference estimate \eqref{eq:diffest}; 3) an extra difference estimate: for $(a_0,u_0)\in U$
\EQ{\label{eq:diffest2}
\norm{S_T(a_0,u_0)-S_T(P_{\leq N} a_0, P_{\leq N}u_0)}_{C([0,T]:\X_p)}\les \norm{(a_0-P_{\leq N}a_0,u_0-P_{\leq N}u_0)}_{\X_p},
}
where $P_{\leq N}$ is some frequency projection operator defined in \eqref{PS}.  We could apply this method to \eqref{eulercauchy}, but we need to restrict the range of  $p$ to $1\leq p<d$ due to the restriction of \eqref{eq:diffest}.  See the appendix for the details.

{\bf Frequency envelope method.} Tao introduced this method in \cite{Tao04} and it was also widely used in dispersive PDE (see for instance \cite{KoTz}).  The main ingredients are 1) uniform boundedness; 2) the difference estimate \eqref{eq:diffest}; 3) uniform tail estimate on frequency: for $(a^n_0,u^n_0)\to (a_0,u_0)\in U$, then $\forall \varepsilon>0$, there exists $N>0$ such that
\EQ{
\norm{P_{>N}[S_T(a_0^n,u_0^n)]}_{C([0,T]: \X_p)}\leq \varepsilon, \quad \forall n.
}

In this paper, we will combine the frequency envelope method, and the Lagrangian approach in \cite{D2014} to replace \eqref{eq:diffest}, to obtain continuity for the full range of $p$: $1\leq p<2d$. More precisely, we prove Theorem~\ref{lwp}
in three steps: assume $(a_0,u_0)\in U$ and a sequence $(a_{0}^{n}, u_0^n)$
converges to $(a_{0},u_{0})$ in $\dot{B}_{p,1}^{\frac{d}{p}}\times \dot{B}_{p,1}^{-1+\frac{d}{p}}$ as well as $(a^n,u^n): =S_T(a_0^n,u_0^n)$.

{\bf Step 1.} Uniform boundedness and small tails estimate on frequency: for $1\leq p<2d$, there exists a $T>0$ such that
\EQ{\norm{(a^n,u^n)}_{Z_p(T)}\leq C, \quad \forall\, n}
and
$\forall \varepsilon>0$, there exists $N>0$ such that
\EQ{
\norm{(P_{>N}a^n,P_{>N}u^n)]}_{Z_p(T)}\leq \varepsilon, \quad \forall n.
} This will be proved using the frequency envelope method.

{\bf Step 2.} A new difference estimate of the velocity field. We show for $1\leq p<2d$, there holds
\EQ{
\norm{u-u^{n}}_{L^1([0,T];L^\infty)}\les \norm{(a_{0}^{n}-a_{0},u_{0}^{n}-u_{0})}_{\X_p}.
}
This will be proved using the Lagrangian approach in \cite{D2014}.

{\bf Step 3.} Difference estimate for the low-frequency component. For fixed large $N>0$, we need to derive the difference estimate for the sequence $(P_{\leq N} a^n, P_{\leq N} u^n)$ using the results in Steps 1 and 2.

The remainder of this paper is structured as follows. In Section~\ref{listestimate}, we list some well-known results and establish the key estimates needed for the proof of main results. Section~\ref{prooflwp} is dedicated to proving Theorem~\ref{lwp}. In the appendix, we prove the continuity of the solution maps using the Bona-Smith method.

\section{Auxiliary results }\label{listestimate}

Throughout this paper, the symbols $C, c, c_{i}, C_{i} (i\in \mathbb{Z})$  represent generic constants, which may vary from line to line, and the meaning of which will be clear from the context. In some places, we will use the notation $A \lesssim B$ to indicate that $A \leq CB$.

We use $\mathcal{S}$ and $\mathcal{S}'$ to denote the sets of Schwartz functions and tempered distributions over $\mathbb{R}^d$, respectively. The Fourier transform of a function $f$ is denoted by $\mathcal{F} f = \hat{f}=(2\pi)^{-\frac{d}{2}}\int_{\R^d}e^{-ix\xi}f(x)dx$, and its inverse Fourier transform is denoted by $\mathcal{F}^{-1} f = \check{f}$.  All function spaces are defined over $\mathbb{R}^d$. For simplicity, the domain will often be omitted; for example, we use $L^p$ instead of $L^p(\mathbb{R}^d)$ unless otherwise indicated.

Suppose $\varphi \in C^{\infty}(\R^d)$ satisfies $0\leq \varphi \leq 1$, $\varphi = 1$ on $B(1/2)$ and $\varphi = 0$ outside $B(1)$. Set $\psi(\xi) = \varphi(\xi/2) - \varphi(\xi)$, we denote  $\psi_j(\xi) = \psi(\xi/2^j)$ and $\varphi_j(\xi) = \varphi(\xi/2^j) $. The Littlewood-Paley frequency projection operator is defined by
\EQ{ \label{PS}
\dot{\Delta}_j:=(\mathcal{F}^{-1} \psi_j)*, \ \ \ \dot{S}_{j}=P_{\leq j}: = (\mathcal{F}^{-1}\varphi_j)*, \ \ \ P_{> j}: =I_{d} - \dot{S}_{j},
}
where $*$ is the convolution operator in $\R^d$. It is easy to see $\dot{S}_{j}f=\sum_{k\leq j-1}\dot{\Delta}_{k}f$ for $j\in \mathbb{Z}$.
Formally, any product
of two distributions $f$ and $g$ may be decomposed into (\cite{Bo81})
\begin{equation}\label{fg}
\begin{aligned}
fg=\dot{T}_fg +\dot{ T}_gf+ \dot{R}(f,g),
\end{aligned}
\end{equation}
where
\begin{equation}\label{TRfg}
\begin{aligned}
\dot{T}_fg:= \sum_{j}\dot{S}_{j-1}f \dot{\Delta}_{j}g , \quad \dot{R}(f,g):=\sum_{|j-k|\leq1}\dot{\Delta}_{j} f \dot{\Delta}_k g
=\sum_{j}\dot{\Delta}_j f \tilde{\dot{\Delta}}_{j}g.
\end{aligned}
\end{equation}
With our choice of $\varphi$, one can verify that
\begin{equation}\label{S1fg}
\begin{aligned}
\dot{\Delta}_{j}\dot{\Delta}_{k}f=0,\, \,|j-k|\geq2
\end{aligned}
\end{equation}
and
\begin{equation}\label{Sfg}
\begin{aligned}\dot{\Delta}_{j}(\dot{S}_{k-1}f\dot{\Delta}_{k}f)=0,\, \, |j-k|\geq5.
\end{aligned}
\end{equation}
Let $\mathcal{S}'\backslash \mathcal{P}$ denote the tempered distribution modulo the polynomials. For $1 \leq p, q \leq \infty, s\in\mathbb{R}$, denote
\begin{align}
\dot{B}^s_{p,q} := \big \{f\in \mathcal{S}'\backslash \mathcal{P}:\ \  \|f\|_{\dot{B}^s_{p,q}} < \infty  \big\}, \nonumber
\end{align}
where
\begin{align*}
\|f\|_{\dot{B}^s_{p,q}} := \Big (\sum_{j \in \mathbb{Z}} 2^{jsq} \|\dot{\Delta}_j f\|^q_{L^p}   \Big)^{\frac{1}{q}}.
\end{align*}
As we shall work with time-dependent functions valued in Besov spaces, we introduce the
norms:
$$\|f(t,\cdot)\|_{L^{q}_{T}(\dot{B}_{p,r}^{s})}:=\normo{\|f(t,\cdot)\|_{\dot{B}_{p,r}^{s}}
}_{L^{q}_{T}}.
$$

As highlighted in \cite{BaChDa11}, when applying parabolic estimates in Besov spaces, it seems natural to compute the Besov norm by first taking the time-Lebesgue norm before performing the summation. This approach leads to the introduction of the following quantities:
$$\|f(t,\cdot)\|_{\tilde{L}^{q}_{T}(\dot{B}_{p,r}^{s})}:=\| 2^{js}\|\dot{\Delta}_{j}f(t,\cdot)\|_{L_{T}^{q}(L^{p})}\|_{\ell^{r}(\mathbb{Z})}.
$$
 It's worth emphasizing that, due to the Minkowski inequality if $r \leq q$, then
\begin{equation}\label{ebedding}
\begin{aligned}
\|f(t,\cdot)\|_{L^{q}_{T}(\dot{B}_{p,r}^{s})}\leq \|f(t,\cdot)\|_{\tilde{L}^{q}_{T}(\dot{B}_{p,r}^{s})},
\end{aligned}
\end{equation}
with equality if and only if $q = r$. Conversely, the reverse inequality holds if $r \geq q$.
The corresponding truncated semi-norms are defined as follows for some $m_{0}\in\mathbb{Z}$:
$$\|f\|_{\dot{B}_{p,1}^{\sigma}}^{P_{\leq m_0}}:=\sum_{k\leq m_{0}}2^{k\sigma}\|\dot{\Delta}_{k}f\|_{L^{p}},\,\|f\|_{\dot{B}_{p,1}^{\sigma}}^{P_{> m_0}}:=
\sum_{k> m_{0}}2^{k\sigma}\|\dot{\Delta}_{k}f\|_{L^{p}}.
$$

In this paper, we use the frequency envelope method  \cite{Tao04} to show the continuity of the solution map. To this aim, we first introduce the following definition:
\begin{definition}\label{deft}
Let $\delta_{0}>0$. An acceptable frequency weight $\omega=\{\omega_{i}\}$ is defined as a sequence satisfying $\omega_{i}=1 $ for $ i\leq 0 $ and $ 1\leq \omega_{i}\leq \omega_{i+1}\leq 2^{\delta_{0}}\omega_{i}$ for $i>0$. We denote it by $\omega\in AF(\delta_{0})$.
\end{definition}
With an acceptable frequency weight $\omega$, we slightly modulate the homogeneous Besov spaces in the following way:
for $s\in \mathbb{R}$ and
 $1 \leq p, r \leq \infty$, we define $\dot{B}_{p,r}^{s}(\omega)$ with the norm
$$
  \|f\|_{\dot{B}_{p,r}^{s}(\omega)}
  :=(\sum_{i=-\infty}^{\infty}\omega_{i}^{r}2^{isr}\|\dot{\Delta}_{i}f\|_{L^{p}}^{r})^{1/r}.
$$
Note that $\dot{B}_{p,r}^{s}(\omega)=\dot{B}_{p,r}^{s}$ when we choose $\omega_{i}=1$ for any $i\in \mathbb{Z}$. Simlarly, we can define time-dependent functions valued in weighted Besov spaces
$$\|f(t,\cdot)\|_{L^{q}_{T}(\dot{B}_{p,r}^{s}(\omega))}:
=\|\|f(t,\cdot)\|_{\dot{B}_{p,r}^{s}(\omega)}
\|_{L^{q}_{T}}
$$
and
$$\|f(t,\cdot)\|_{\tilde{L}^{q}_{T}(\dot{B}_{p,r}^{s}(\omega))}:=\| 2^{js}\omega_{j}\|\dot{\Delta}_{j}f(t,\cdot)\|_{L_{T}^{q}L^{p}}\|_{\ell^{r}(\mathbb{Z})}.
$$

The following Bernstein's inequalities describe how derivatives act on spectrally localized functions.
\begin{lemma}[see Lemma 2.1 in \cite{BaChDa11}] \label{bernstein} Let $ \mathcal{B}$ be a Ball and $\mathcal{C}$ be an annulus. There exists a constant $C>0$ such that for all $k\in \mathbb{N}\cup \{0\}$, any positive real number $\lambda$ and any function $f\in L^p$ with $1\leq p \leq q \leq \infty$, we have
\begin{align*}
&{\rm{supp}}\hat{f}\subset \lambda \mathcal{B}\;\Rightarrow\; \|D^{k}f\|_{L^q}\leq C^{k+1}\lambda^{k+(\frac{1}{p}-\frac{1}{q})}\|f\|_{L^p},  \\
&{\rm{supp}}\hat{f}\subset \lambda \mathcal{C}\;\Rightarrow\; C^{-k-1}\lambda^{k}\|f\|_{L^p} \leq \|D^{k}f\|_{L^p} \leq C^{k+1}\lambda^{k}\|f\|_{L^p}.
\end{align*}
\end{lemma}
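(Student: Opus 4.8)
The plan is to deduce both inequalities from a single mechanism: because $\hat f$ is spectrally localized, each differential operator $\partial^\alpha$ can be realized as convolution against an explicit rescaled kernel, after which Young's convolution inequality does all the work. I would fix once and for all a radial cutoff $\chi\in C_c^\infty(\R^d)$ equal to $1$ on a neighbourhood of $\mathcal B$ (for the first estimate) or on a fixed annulus containing $\mathcal C$ (for the second), so that $\hat f(\xi)=\chi(\xi/\lambda)\hat f(\xi)$ whenever $\supp\hat f$ lies in $\lambda\mathcal B$ or $\lambda\mathcal C$.

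\textbf{Ball case.} For a multi-index $\alpha$ with $|\alpha|=k$ I would write $\partial^\alpha f=\mathcal F^{-1}\big[(i\xi)^\alpha\chi(\xi/\lambda)\hat f\big]=g_{\lambda,\alpha}*f$, where $g_{\lambda,\alpha}=\mathcal F^{-1}[(i\xi)^\alpha\chi(\cdot/\lambda)]$. The substitution $\xi=\lambda\zeta$ gives the exact scaling identity $g_{\lambda,\alpha}(x)=\lambda^{d+k}G_\alpha(\lambda x)$, with $G_\alpha:=\partial^\alpha\mathcal F^{-1}\chi$ a fixed Schwartz function independent of $\lambda$. Young's inequality $\|g*f\|_{L^q}\le\|g\|_{L^r}\|f\|_{L^p}$ with $1+\tfrac1q=\tfrac1r+\tfrac1p$, together with $\|g_{\lambda,\alpha}\|_{L^r}=\lambda^{\,k+d(1/p-1/q)}\|G_\alpha\|_{L^r}$ (the power coming from $d+k-d/r=k+d(1/p-1/q)$), then produces the stated bound once $\|G_\alpha\|_{L^r}\le C^{k+1}$ is known.

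\textbf{Annulus case.} The upper bound is just the ball estimate with $q=p$ (hence $r=1$), applied with the annular cutoff. For the lower bound I would reconstruct $f$ from its order-$k$ derivatives through the multinomial identity $|\xi|^{2k}=\sum_{|\alpha|=k}\binom{k}{\alpha}\xi^{2\alpha}$. Since $|\xi|\sim\lambda$ on $\lambda\mathcal C$, dividing by $|\xi|^{2k}$ and inserting the cutoff gives
\[
\hat f=(-i)^k\lambda^{-k}\sum_{|\alpha|=k}\binom{k}{\alpha}\,m_\alpha(\xi/\lambda)\,\widehat{\partial^\alpha f},\qquad m_\alpha(\zeta):=\frac{\zeta^\alpha\chi(\zeta)}{|\zeta|^{2k}},
\]
where each $m_\alpha$ is smooth and supported in a fixed annulus (so the factor $|\zeta|^{-2k}$ is harmless). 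Inverting and applying Young's inequality with the scale-invariant $L^1$ kernels $\mathcal F^{-1}m_\alpha$ yields $\lambda^k\|f\|_{L^p}\le\big(\sum_{|\alpha|=k}\binom{k}{\alpha}\|\mathcal F^{-1}m_\alpha\|_{L^1}\big)\|D^k f\|_{L^p}$, which is the desired lower bound provided the bracketed sum is $\le C^{k+1}$.

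\textbf{Main obstacle.} The only point that is not routine bookkeeping is the uniform geometric control $\|G_\alpha\|_{L^r},\,\|\mathcal F^{-1}m_\alpha\|_{L^1}\le C_0^{\,k+1}$ for all $|\alpha|=k$; this is exactly what upgrades a $k$-dependent constant to the sharp $C^{k+1}$. I would get the $L^\infty$ half from $\|G_\alpha\|_{L^\infty}\le\|\xi^\alpha\chi\|_{L^1}\le R_0^{\,k}\|\chi\|_{L^1}$, where $\supp\chi\subset B(0,R_0)$, and the $L^1$ half from a weighted Plancherel estimate $\|G_\alpha\|_{L^1}\lesssim\|\langle\cdot\rangle^{d+1}G_\alpha\|_{L^2}\lesssim\sum_{|\beta|\le d+1}\|\partial_\xi^\beta[(i\xi)^\alpha\chi]\|_{L^2}$: here Leibniz produces binomial factors bounded by $2^{k}$ and monomials bounded by $R_0^{k}$ on the compact support, and only finitely many ($|\beta|\le d+1$) derivatives act, so the total grows at most like $k^{d+1}R_0^{k}\le C_0^{k+1}$. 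Interpolating $\|G_\alpha\|_{L^r}\le\|G_\alpha\|_{L^1}^{1/r}\|G_\alpha\|_{L^\infty}^{1-1/r}$ closes the ball case, and the same bookkeeping (now with the extra polynomial-in-$k$ factors from differentiating $|\zeta|^{-2k}$, again absorbed into $C_0^{k+1}$) handles $m_\alpha$, where finally $\sum_{|\alpha|=k}\binom{k}{\alpha}=d^{\,k}$ keeps the combinatorial prefactor geometric.
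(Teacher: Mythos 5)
The paper does not prove this lemma at all---it quotes it verbatim from Lemma 2.1 of Bahouri--Chemin--Danchin---and your argument is precisely the standard proof given in that reference: spectral cutoff, rescaled convolution kernel plus Young's inequality for the direct bounds, the identity $|\xi|^{2k}=\sum_{|\alpha|=k}\binom{k}{\alpha}\xi^{2\alpha}$ for the reverse bound, and a weighted-Plancherel/Leibniz count that correctly keeps the constants geometric in $k$. One remark: your (correct) computation yields the exponent $\lambda^{k+d(\frac{1}{p}-\frac{1}{q})}$, so the factor $d$ missing from the exponent in the paper's statement is a typo that your derivation implicitly corrects.
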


We now prove a frequency envelope version of continuity properties about the paraproduct and remainder operators. The following lemma will be frequently used.

\begin{lemma} \label{paraproduct}
Let $s\in\mathbb{R}, 1\leq p, r\leq\infty$ and $\frac{1}{p}+\frac{1}{p'}=1$. Assume $\omega\in AF(\delta_0)$.  Then there exists a constant $C$ such that
\begin{itemize}

\item[(1)]
\begin{equation}
\begin{aligned} 
\|\dot{T}_{f}g\|_{\dot{B}_{p,r
}^{s+t}(\omega)} \leq C\|f\|_{\dot{B}_{p,\infty}^{t+\frac{d}{p}}}\|g\|_{\dot{B}_{p,r}^{s}(\omega)}, \, \, t<0  \no,
\end{aligned}
\end{equation}
\begin{equation}\label{Tfg1}
\begin{aligned} \|\dot{T}_{f}g\|_{\dot{B}_{p,r
}^{s+t}(\omega)} \leq C\|f\|_{\dot{B}_{p,1}^{t+
\frac{d}{p}}}\|g\|_{\dot{B}_{p,r}^{s}(\omega)}, \, \, t\leq0.
\end{aligned}
\end{equation}

\item[(2)]
\begin{align}
\|\dot{T}_{f}g\|_{\dot{B}_{p,r
}^{s+t}(\omega)}
\leq C\|g\|_{\dot{B}_{p,r}^{s}} \|f\|_{\dot{B}_{p,\infty}^{t+\frac{d}{p}}(\omega)}, \,  \, t+\delta_{0}<0, \no
\end{align}
\begin{align}
\|\dot{T}_{f}g\|_{\dot{B}_{p,r
}^{s+t}(\omega)}
\leq C\|g\|_{\dot{B}_{p,\infty}^{s}} \|f\|_{\dot{B}_{p,r}^{t+\frac{d}{p}}(\omega)}, \,  \, t+\delta_{0}<0, \no
\end{align}
\begin{align}\label{Tfg4}
\|\dot{T}_{f}g\|_{\dot{B}_{p,r
}^{s+t}(\omega)}
\leq C\|g\|_{\dot{B}_{p,r}^{s}} \|f\|_{\dot{B}_{p,1}^{t+\frac{d}{p}}(\omega)}, \, \, t+\delta_{0}\leq0.
\end{align}
\item[(3)] if $t_{1}+t_{2}>-\min\{\frac{d}{p},\frac{d}{p'}\}$, then
\begin{equation}
\begin{aligned} \|\dot{R}(f,g)\|_{\dot{B}_{p,r
}^{t_{1}+t_{2}}(\omega)} \leq C\|f\|_{\dot{B}_{p,r}^{t_{1}+\frac{d}{p}}}\|g\|_{\dot{B}_{p,\infty}^{t_{2}}(\omega)},
\end{aligned}
\end{equation}
\begin{equation*}
\begin{aligned}
\|\dot{R}(f,g)\|_{\dot{B}_{p,r
}^{t_{1}+t_{2}}(\omega)}\leq C\|f\|_{\dot{B}_{p,\infty}^{t_{1}+\frac{d}{p}}}\|g\|_{\dot{B}_{p,r}^{t_{2}}(\omega)}.
\end{aligned}
\end{equation*}
\item[(4)] if $s_{1}\leq \frac{d}{p},s_{2}\leq \frac{d}{p}$ and $s_{1}+s_{2}> d\max\{0,\frac{2}{p}-1\}$, then we have
\begin{equation}
\begin{aligned} \|fg\|_{\dot{B}_{p,1
}^{s_{1}+s_{2}-\frac{d}{p}}(\omega)} \leq C\|f\|_{\dot{B}_{p,1}^{s_{1}}(\omega)}\|g\|_{\dot{B}_{p,1}^{s_{2}}}
+C\|f\|_{\dot{B}_{p,1}^{s_{1}}}\|g\|_{\dot{B}_{p,1}^{s_{2}}(\omega)}.
\end{aligned}
\end{equation}
\item[(5)] if $s_{1}\leq \frac{d}{p},s_{2}\leq \frac{d}{p}-\delta_{0}$ and $s_{1}+s_{2}>d\max\{0,\frac{2}{p}-1\}$, then we have
\begin{equation}
\begin{aligned} \|fg\|_{\dot{B}_{p,1
}^{s_{1}+s_{2}-\frac{d}{p}}(\omega)} \leq C\|f\|_{\dot{B}_{p,1}^{s_{1}}}\|g\|_{\dot{B}_{p,1}^{s_{2}}(\omega)}.
\end{aligned}
\end{equation}
\end{itemize}
\end{lemma}
\begin{proof}
(1) When $t>0$,
due to frequency interaction \eqref{TRfg}, \eqref{Sfg},  Lemma~\ref{bernstein} and Definition~\ref{deft}, one can obtain
\begin{align}
2^{m(s+t)}\omega_{m} \|\dot{\Delta}_m T_fg \|_{L^{p}}
 & \lesssim \sum_{|j-m| \leq 4} \frac{\omega_{m}}{\omega_{j}}2^{(m-j)(s+t)}2^{js}\omega_{j}\|\dot{\Delta}_j g\|_{L^{p}}\sum_{\ell \leq j-2}2^{(j-\ell) t}2^{\ell t}\|
 \dot{\Delta}_{\ell} f\|_{L^{\infty}}  \no \\
 & \lesssim \sum_{|j-m| \leq 4} 2^{js}2^{(m-j)(s+t)}\omega_{j}\|\dot{\Delta}_j g\|_{L^{p}}\sum_{\ell \leq j-2}2^{(j-\ell)t}2^{\ell (t+\frac{d}{p})}\|
 \dot{\Delta}_{\ell} f\|_{L^{p}}  \no \\
& \lesssim\|f\|_{\dot{B}_{p,\infty}^{t+\frac{d}{p}}}\sum_{|j-m| \leq 4} 2^{(m-j)s}2^{js}\omega_{j}\|\dot{\Delta}_j g\|_{L^{p}}.\no
\end{align}
Similarly, for $t\leq0$, we have
\begin{align}
2^{m(s+t)}\omega_{m} \|\dot{\Delta}_m T_fg \|_{L^{p}}
& \lesssim\|f\|_{\dot{B}_{p,1}^{t+\frac{d}{p}}}\sum_{|j-m| \leq 4} 2^{(m-j)s}2^{js}\omega_{j}\|\dot{\Delta}_j g\|_{L^{p}}.\no
\end{align}
Applying Young's inequality for the above series ensures that
\begin{equation*}
\begin{aligned} \|\dot{T}_{f}g\|_{\dot{B}_{p,r
}^{s+t}(\omega)} \leq C\|f\|_{\dot{B}_{p,\infty}^{t+\frac{d}{p}}}\|g\|_{\dot{B}_{p,r}^{s}(\omega)}, \, \, t<0
\end{aligned}
\end{equation*}
and
\begin{equation*}
\begin{aligned} \|\dot{T}_{f}g\|_{\dot{B}_{p,r
}^{s+t}(\omega)} \leq C\|f\|_{\dot{B}_{p,1}^{t+\frac{d}{p}}}\|g\|_{\dot{B}_{p,r}^{s}(\omega)}, \, \, t\leq0.
\end{aligned}
\end{equation*}
(2) When $t+\delta_{0}<0$, by \eqref{TRfg}, \eqref{Sfg} and
the H\"{o}lder inequality, we observe that
\begin{align*}
2^{m(s+t)}\omega_{m} \|\Delta_m \dot{T}_fg \|_{L^{p}}
 & \lesssim \sum_{|j-m| \leq 4} \frac{2^{(m-j)(s+t)}\omega_{m}}{\omega_{j}}\omega_{j}2^{jt}\|\dot{S}_{j-1} f\|_{L^{\infty}}2^{js}\|\dot{\Delta}_j g\|_{L^{p}},
\end{align*}
from which, by the Young inequality and  the H\"{o}lder inequality, we have
\begin{align*}
\|\dot{T}_{f}g\|_{\dot{B}_{p,r
}^{s+t}(\omega)}
& \lesssim \|g\|_{\dot{B}_{p,r}^{s}} \sup_{j\in\mathbb{ Z}}\{\sum_{k\leq j-2}\frac{\omega_{j}}{\omega_{k}}2^{(j-k)t}\omega_{k}2^{kt}\|\dot{\Delta}_{k} f\|_{L^{\infty}}\}  \no \\
& \lesssim \|g\|_{\dot{B}_{p,r}^{s}} \sup_{j\in\mathbb{ Z}}\{\sum_{k\leq j-2}2^{\delta_{0}(j-k)}2^{(j-k)t}\omega_{k}2^{k(t+\frac{d}{p})}\|\dot{\Delta}_{k} f\|_{L^{p}}\}  \no \\
& \lesssim \|g\|_{\dot{B}_{p,r}^{s}} \|f\|_{\dot{B}_{p,\infty}^{t+\frac{d}{p}}(\omega)}\no \\
\end{align*}
and
\begin{align*}
\|\dot{T}_{f}g\|_{\dot{B}_{p,r
}^{s+t}(\omega)}
& \lesssim \|g\|_{\dot{B}_{p,\infty}^{s}} \|\{\sum_{k\leq j-2}\frac{\omega_{j}}{\omega_{k}}2^{(j-k)t}\omega_{k}2^{kt}\|\dot{\Delta}_{k} f\|_{L^{\infty}}\}_{j} \|_{\ell^{r}(\mathbb{Z})} \no \\
& \lesssim \|g\|_{\dot{B}_{p,\infty}^{s}} \|\{\sum_{k\leq j-2}2^{\delta_{0}(j-k)}2^{(j-k)t}\omega_{k}2^{k(t+\frac{d}{p})}\|\dot{\Delta}_{k} f\|_{L^{p}}\}_{j} \|_{\ell^{r}(\mathbb{Z})}  \no \\
& \lesssim \|g\|_{\dot{B}_{p,\infty}^{s}} \|f\|_{\dot{B}_{p,r}^{t+\frac{d}{p}}(\omega)}. \no \\
\end{align*}
Similarly, when $t+\delta_{0}\leq0$, we get
\begin{align*}
\|\dot{T}_{f}g\|_{\dot{B}_{p,r
}^{s+t}(\omega)}
& \lesssim \|g\|_{\dot{B}_{p,r}^{s}} \sup_{j\in\mathbb{ Z}}(\sum_{k\leq j-2}2^{\delta(j-k)}2^{(j-k)t}\omega_{k}2^{k(t+\frac{d}{p})}\|\dot{\Delta}_{k} f\|_{L^{p}})  \no \\
& \lesssim \|g\|_{\dot{B}_{p,r}^{s}} \|f\|_{\dot{B}_{p,1}^{t+\frac{d}{p}}(\omega)}. \no \\
\end{align*}
(3) Notice that $1\leq p_{1}, p_{2}\leq\infty, \frac{1}{p}=\frac{1}{p_{1}}+\frac{1}{p_{2}}$ and
 $$\dot{\Delta}_{m}(\dot{R}(f,g))=\sum_{j\geq m-3} \dot{\Delta}_{m}(\dot{\Delta}_j f \tilde{\dot{\Delta}}_j g),$$
by the Hölder inequality, it follows that
\begin{align}
2^{m(s_{1}+s_{2})}\omega_{m} \|\dot{\Delta}_m \dot{R}(f,g) \|_{L^{p}}
 & \lesssim \sum_{m\leq j+3}2^{(m-j)(s_{1}+s_{2})} \frac{\omega_{m}}{\omega_{j+3}}\frac{\omega_{j+3}}{\omega_{j}}\omega_{j}2^{js_{1}}\|\dot{\Delta}_{j} f\|_{L^{p_{1}}}2^{js_{2}}\|\dot{\tilde{\Delta}}_j g\|_{L^{p_{2}}}  \no \\
  & \lesssim \sum_{m\leq j+3}2^{(m-j)(s_{1}+s_{2})} \omega_{j}2^{js_{1}}\|\dot{\Delta}_{j} f\|_{L^{p_{1}}}2^{js_{2}}\|\tilde{\dot{\Delta}}_j g\|_{L^{p_{2}}}.  \no
\end{align}
When $s_{1}+s_{2}>0$, applying Young's inequality for series yields
\begin{align}\label{rfg}
\|\dot{R}(f,g) \|_{_{\dot{B}_{p,r
}^{s_{1}+s_{2}}(\omega)}}
  & \lesssim \|g\|_{\dot{B}_{p_{2},r
}^{s_{2}}} \|f\|_{\dot{B}_{p_{1},\infty
}^{s_{1}}(\omega)} \sum_{m\leq3}2^{m(s_{1}+s_{2})} \lesssim \|g\|_{\dot{B}_{p_{2},r
}^{s_{2}}} \|f\|_{\dot{B}_{p_{1},\infty
}^{s_{1}}(\omega)}.
\end{align}
 When $p\geq2$, noting that $t_{1}+t_{2}+\frac{d}{p}>0$ and using \eqref{rfg} and Lemma~\ref{bernstein}, we obtain
 \begin{equation}\label{Rfg3}
\begin{aligned}
\|\dot{R}(f,g)\|_{\dot{B}_{p,r
}^{t_{1}+t_{2}}(\omega)} \lesssim\|\dot{R}(f,g)\|_{\dot{B}_{p/2,r
}^{t_{1}+t_{2}+\frac{d}{p}}(\omega)}\lesssim C\|f\|_{\dot{B}_{p,r}^{t_{1}+\frac{d}{p}}}\|g\|_{\dot{B}_{p,\infty}^{t_{2}}(\omega)}.
\end{aligned}
\end{equation}
For $1\leq p<2$, when $t_{1}+t_{2}+\frac{d}{p'}>0$, by \eqref{rfg} and Lemma~\ref{bernstein}, one has
 \begin{equation}\label{Rfg4}
\begin{aligned}
&\|\dot{R}(f,g)\|_{\dot{B}_{p,r
}^{t_{1}+t_{2}}(\omega)} \lesssim\|\dot{R}(f,g)\|_{\dot{B}_{1,r
}^{t_{1}+t_{2}+\frac{d}{p'}}(\omega)}\\
&\   \    \   \     \lesssim \|f\|_{\dot{B}_{p',r}^{t_{1}+\frac{d}{p'}}}\|g\|_{\dot{B}_{p,\infty}^{t_{2}}(\omega)}
\lesssim\|f\|_{\dot{B}_{p,r}^{t_{1}+\frac{d}{p}}}\|g\|_{\dot{B}_{p,\infty}^{t_{2}}(\omega)}.
\end{aligned}
\end{equation}
Thus by \eqref{Rfg3} and \eqref{Rfg4}, when $t_{1}+t_{2}>-\min\{\frac{d}{p},\frac{d}{p'}\}$, we can obtain
\begin{equation*}
\begin{aligned}
\|\dot{R}(f,g)\|_{\dot{B}_{p,r
}^{t_{1}+t_{2}}(\omega)}\lesssim\|f\|_{\dot{B}_{p,r}^{t_{1}+\frac{d}{p}}}\|g\|_{\dot{B}_{p,\infty}^{t_{2}}(\omega)}.
\end{aligned}
\end{equation*}
Similarly, when $t_{1}+t_{2}>-\min\{\frac{d}{p},\frac{d}{p'}\}$, we can obtain
\begin{equation*}
\begin{aligned}
\|\dot{R}(f,g)\|_{\dot{B}_{p,r
}^{t_{1}+t_{2}}(\omega)}\lesssim\|f\|_{\dot{B}_{p,\infty}^{t_{1}+\frac{d}{p}}}\|g\|_{\dot{B}_{p,r}^{t_{2}}(\omega)}.
\end{aligned}
\end{equation*}
(4) By the results of (1), for $s_{1}-\frac{d}{p}\leq 0$, we have
$$
\|\dot{T}_{f}g\|_{\dot{B}_{p,1}^{s_{1}+s_{2}-\frac{d}{p}}(\omega)}\lesssim\|f\|_{\dot{B}_{p,1}^{s_{1}}}\|g\|_{\dot{B}_{p,1}^{s_{2}}(\omega)},$$ and for $s_{2}-\frac{d}{p}\leq 0$, one has
$$
\|\dot{T}_{g}f\|_{\dot{B}_{p,1}^{s_{1}+s_{2}-\frac{d}{p}}(\omega)}\lesssim\|f\|_{\dot{B}_{p,1}^{s_{1}}(\omega)}\|g\|_{\dot{B}_{p,1}^{s_{2}}}.
$$
When $s_{1}+s_{2}> d\max\{0,\frac{2}{p}-1\}$, using the conclusions of (3), we have
$$\|\dot{R}(f,g)\|_{\dot{B}_{p,1}^{s_{1}+s_{2}-\frac{d}{p}}(\omega)}\lesssim\|f\|_{\dot{B}_{p,1}^{s_{1}}(\omega)}\|g\|_{\dot{B}_{p,1}^{s_{2}}}.$$
Therefore, by \eqref{fg}, we have the desired results.

(5) Similarly, by the results of (1) we have
$$\|\dot{T}_{f}g\|_{\dot{B}_{p,1}^{s_{1}+s_{2}-\frac{d}{p}}(\omega)}
\lesssim\|f\|_{\dot{B}_{p,1}^{s_{1}}}\|g\|_{\dot{B}_{p,1}^{s_{2}}(\omega)}
$$ for $s_{1}-\frac{d}{p}\leq 0$.
When $s_{2}-\frac{d}{p}\leq -\delta_{0}$, according to (2), one may write
$$
\|\dot{T}_{g}f\|_{\dot{B}_{p,1}^{s_{1}+s_{2}-\frac{d}{p}}(\omega)}
\lesssim\|f\|_{\dot{B}_{p,1}^{s_{1}}}\|g\|_{\dot{B}_{p,1}^{s_{2}}(\omega)}.
$$
For $s_{1}+s_{2}> d\max\{0,\frac{2}{p}-1\}$, Using the result of (3), one has
$$
\|\dot{R}(f,g)\|_{\dot{B}_{p,1}^{s_{1}+s_{2}-\frac{d}{p}}(\omega)}\lesssim\|f\|_{\dot{B}_{p,1}^{s_{1}}}\|g\|_{\dot{B}_{p,1}^{s_{2}}(\omega)}.
$$
At last, by \eqref{fg}, we have the desired results.
\end{proof}
 \begin{rem} \label{remark}
 From (4) of Lemma~\ref{paraproduct}, for some constant $C>0$, whenever
$1\leq p<\infty$ we deduce that
\begin{equation}\label{bfg}
\begin{aligned} \|fg\|_{\dot{B}_{p,1}^{\frac{d}{p}}(\omega)}\leq C\|f\|_{\dot{B}_{p,1}^{\frac{d}{p}}}\|g\|_{\dot{B}_{p,1}^{\frac{d}{p}}(\omega)}+C\|f\|_{\dot{B}_{p,1}^{\frac{d}{p}}(\omega)}\|g\|_{\dot{B}_{p,1}^{\frac{d}{p}}}.\end{aligned}
\end{equation}
From (5) of Lemma~\ref{paraproduct}, when $d\geq2, 1\leq p<2d, 0<\delta_{0}\leq1$, there exists some constant $C>0$ such that
\begin{equation}\label{Cfg}
\begin{aligned}\|fg\|_{\dot{B}_{p,1}^{-1+\frac{d}{p}}(\omega)}\leq C\|f\|_{\dot{B}_{p,1}^{\frac{d}{p}}}\|g\|_{\dot{B}_{p,1}^{-1+\frac{d}{p}}(\omega)}.\end{aligned}
\end{equation}
\end{rem}
We  consider the following transport system:
\begin{align}\label{eq:TDep}
\begin{cases}
\p_t a+v\cdot \nabla a+\lambda a=f,\\
a(0)=a_0,
\end{cases}
\end{align}
where $a_0=a_0(x)$ represents the initial data, $f=f(t,x)$ is the source term, $\lambda\geq 0$ is the damping coefficient, and $v=v(t,x)$ is the time-dependent transport field. We will provide estimates in Besov spaces for the transport equation with a frequency envelope version.

\begin{lemma}\label{transport}
Let $1\leq p,r\leq \infty$ and
\begin{align*}
-d \min(\frac{1}{p},1-\frac{1}{p}) <s<1+\frac{d}{p}-\delta_0.
\end{align*}
Assume that $\omega\in AF(\delta_0), a_{0}\in\dot{B}_{p,r}^{s}(\omega)$ and
$f\in\tilde{L}_{t}^{1}(\dot{B}_{p,r}^{s}(\omega))$, then there exists a constant $C>0$ such that for any smooth solution $a$ of \eqref{eq:TDep} and $t\geq 0$ we have
\begin{equation*}
\begin{aligned}
\|a\|_{\tilde{L}_{t}^{\infty}(\dot{B}_{p,r}^{s}(\omega))}+
\lambda\|a\|_{\tilde{L}_{t}^{1}(\dot{B}_{p,r}^{s}(\omega))}
\leq Ce^{CV(t)}(\|a_{0}\|_{\dot{B}_{p,r}^{s}(\omega)}
+\|f\|_{\tilde{L}_{t}^{1}(\dot{B}_{p,r}^{s}(\omega))})
\end{aligned}
\end{equation*}
with
\begin{equation*}
\begin{aligned}
V(t)=\int_{0}^{t}\|\nabla v(\tau)\|_{\dot{B}^{\frac{d}{p}}_{p,\infty}\cap L^\infty}d\tau.
\end{aligned}
\end{equation*}
\end{lemma}
\begin{proof}
   Applying the frequency localization operator $\dot{\Delta}_j $ to the transport equation \eqref{eq:TDep}, one can find
\begin{align*}
\partial_t \dot{\Delta}_j a+ v\cdot \nabla \dot{\Delta}_j a+ \lambda\dot{\Delta}_j a=\dot{\Delta}_j f+ [v\cdot \nabla,\dot{\Delta}_j] a.  \no
\end{align*}
We first claim that the commutator satisfies
 \begin{equation}\label{CR}
\begin{aligned}
&2^{js}\omega_{j}\|[v\cdot \nabla,\dot{\Delta}_j] a\|_{L^{p}} \lesssim c_{j}(t)\|\nabla v(t)\|_{\dot{B}^{\frac{d}{p}}_{p,\infty}\cap L^\infty}\|a(t)\|_{\dot{B}_{p,r}^{s}(\omega)}
\end{aligned}
\end{equation} with $\|c_{j}(t)\|_{\ell^{r}}=1$.
Indeed, using Bony's decomposition, we infer that
\begin{equation}\label{Rfg1}
\begin{aligned}
&[v\cdot \nabla,\dot{\Delta}_j] a=[\dot{T}_{v^{k}},\dot{\Delta}_j]\partial_{k}a+(T_{\partial_{k}\dot{\Delta}_ja}v^{k}
+\dot{R}(\partial_{k}\dot{\Delta}_ja,v^{k}))-\dot{\Delta}_{j}
\dot{T}_{\partial_{k}a}v^{k}-\dot{\Delta}_{j}\dot{R}(
\partial_{k}a,v^{k}).
\end{aligned}
\end{equation}
Due to the spectral localization properties of the Littlewood-Paley decomposition, we first observe that
\begin{equation*}
\begin{aligned}
&[\dot{T}_{v^{k}},\dot{\Delta}_j]\partial_{k}a=\sum_{|j-j'|\leq 4}[\dot{S}_{j'-1}v^{k},\dot{\Delta}_j]\partial_{k}\dot{\Delta}_ja,
\end{aligned}
\end{equation*}
from which, using commutator estimates of Lemma 2.97 in \cite{BaChDa11}, page 110, one can  obtain
\begin{equation}\label{CT1}
\begin{aligned}
&2^{js}\omega_{j}\|[\dot{T}_{v^{k}},\dot{\Delta}_j]\partial_{k}a\|_{L^{p}}\lesssim \sum_{|j-j'|\leq 4}2^{js}\omega_{j}2^{-j}
\|\dot{S}_{j'-1}\nabla v\|_{L^{\infty}}
\|\partial_{k}\dot{\Delta}_{j'}a\|_{L^{p}}\\
&\lesssim
\|\nabla v\|_{L^{\infty}}
\sum_{|j-j'|\leq 4}2^{(j-j')s}2^{-j+j'}\frac{\omega_{j}}{\omega_{j'}}2^{j's}\omega_{j'}\|\dot{\Delta}_{j'}a\|_{L^{p}}\\
&\lesssim c_{j}(t)\| a\|_{\dot{B}_{p,r}^{s}(\omega)}\|\nabla v\|_{L^{\infty}}.\\
\end{aligned}
\end{equation}
By (2) of Lemma~\ref{paraproduct}, when $s-1-\frac{d}{p}<-\delta_{0}$, we have
\begin{equation}\label{CT2}
\begin{aligned}
&2^{js}\omega_{j}\|\dot{\Delta}_{j}
\dot{T}_{\partial_{k}a}v^{k}\|_{L^{p}}=c_{j}(t)\|
\dot{T}_{\partial_{k}a}v^{k}\|_{\dot{B}_{p,r}^{s}(\omega)}\lesssim c_{j}(t)\|\nabla a\|_{\dot{B}_{p,r}^{s-1}(\omega)}\|v\|_{\dot{B}_{p,\infty}^{1+\frac{d}{p}}}\\
&\lesssim c_{j}(t)\| a\|_{\dot{B}_{p,r}^{s}(\omega)}\|\nabla v\|_{\dot{B}_{p,\infty}^{\frac{d}{p}}}.
\end{aligned}
\end{equation}
From (3) of Lemma~\ref{paraproduct} , for $s>-{\text{min}}\{\frac{d}{p},\frac{d}{p'}\}$, we get
\begin{equation}\label{CT3}
\begin{aligned}
&2^{js}\omega_{j}\|\dot{\Delta}_{j}\dot{R}(
\partial_{k}a,v^{k})\|_{L^{p}}=c_{j}(t)\|
\dot{R}(
\partial_{k}a,v^{k})\|_{\dot{B}_{p,r}^{s}(\omega)}\lesssim c_{j}(t)\|\nabla a\|_{\dot{B}_{p,r}^{s-1}(\omega)}\|v\|_{\dot{B}_{p,\infty}^{1+\frac{d}{p}}}\\
&\lesssim c_{j}(t)\| a\|_{\dot{B}_{p,r}^{s}(\omega)}\|\nabla v\|_{\dot{B}_{p,\infty}^{\frac{d}{p}}}.\\
\end{aligned}
\end{equation}
Notice that $$T_{\partial_{k}\dot{\Delta}_ja}v^{k}
+\dot{R}(\partial_{k}\dot{\Delta}_ja,v^{k})=\sum_{j\leq j'+2}S_{j'+2}\partial_{k}
\dot{\Delta}_ja\dot{\Delta}_{j'}v^{k}.$$
We get by Lemma~\ref{bernstein} that
 \begin{equation}\label{CT4}
\begin{aligned}
&2^{js}\omega_{j}\|T_{\partial_{k}\dot{\Delta}_ja}v^{k}
+\dot{R}(\partial_{k}\dot{\Delta}_ja,v^{k})\|_{L^{p}}=2^{js}\omega_{j}\sum_{j\leq j'+2}\|S_{j'+2}\partial_{k}
\dot{\Delta}_ja\|_{L^{p}}\|\dot{\Delta}_{j'}v^{k}\|_{L^{\infty}}\\
&\lesssim 2^{js}\omega_{j}\sum_{j\leq j'+2}2^{j-j'}\|
\dot{\Delta}_ja\|_{L^{p}}\|\nabla v^{k}\|_{L^{\infty}}\\
&\lesssim 2^{js}\omega_{j}\|
\dot{\Delta}_ja\|_{L^{p}}\|\nabla v\|_{L^{\infty}}\lesssim c_{j}(t)\| a\|_{\dot{B}_{p,r}^{s}(\omega)}\|\nabla v\|_{L^{\infty}}.\\
\end{aligned}
\end{equation}
Combining \eqref{Rfg1}-\eqref{CT4}, we can obtain \eqref{CR}.
Applying classical $L^{p}$ estimates \cite[Proposition 2.1]{D2001} for the transport equation
 implies
 \begin{equation}\label{Lp1}
\begin{aligned}
&\|\dot{\Delta}_{j}a(t)\|_{L^{p}} +\lambda2^{js}\omega_{j}\|\dot{\Delta}_{j}a\|_{L_{t}^{1}(L^{p})}\leq\|\dot{\Delta}_j a_{0}\|_{L^{p}}\\&+\int_0^t (\|\dot{\Delta}_{j}f\|_{L^{p}}+\frac{\|{\rm div}\, v\|_{L^{\infty}}}{p}\|\dot{\Delta}_{j} a\|_{L^{p}}+\|[v\cdot \nabla,\dot{\Delta}_{j}] a\|_{L^{p}}) d\tau.
\end{aligned}
\end{equation}
Multiplying the above terms by $2^{js}\omega_{j}$ and using
\eqref{CR}, taking the $\ell^{r}$ norm in \eqref{Lp1} imply
\begin{equation}\label{a-pri-bef}
\begin{aligned}
\|a\|_{\tilde{L}_{t}^{\infty}\dot{B}_{p,r}^{s}(\omega)}+
\lambda\|a\|_{\tilde{L}_{t}^{1}\dot{B}_{p,r}^{s}(\omega)}
\lesssim\|a_{0}\|_{\dot{B}_{p,r}^{s}(\omega)}+\|f\|_{\tilde{L}_{t}^{1}\dot{B}_{p,r}^{s}(\omega)}+
\int_{0}^{t}V'(\tau)\|a(\tau)\|_{\dot{B}_{p,r}^{s}(\omega)}d\tau.
\end{aligned}
\end{equation}
Then applying Gronwall's lemma gives the desired inequality for $a$.
\end{proof}
\begin{lemma} \label{function1}
Let $F: \mathbb{R}\rightarrow \mathbb{R}$ be
smooth with $F(0) = 0$. For all $1\leq p,r\leq\infty$ and $s>0$, $\omega\in AF(\delta_0)$, we have
$F(f)\in \dot{B}_{p,r}^{s}(\omega)$ for all $f\in \dot{B}_{p,r}^{s}(\omega)\cap L^{\infty}$, and
$$\|F(f)\|_{\dot{B}_{p,r}^{s}(\omega)}\leq C\|f\|_{\dot{B}_{p,r}^{s}(\omega)},
$$
with $C$ depending only on $\|f\|_{L^{\infty}}$, $F$, $s, p, r, d$ and $\delta_{0}$.
\end{lemma}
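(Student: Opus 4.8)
The plan is to adapt the classical Meyer-type composition estimate of \cite{BaChDa11} to the weighted setting; the only genuinely new point is the bookkeeping of the weight $\omega$. I would start from the telescoping identity
\[
F(f)=\sum_{j\in\mathbb{Z}}\big(F(\dot{S}_{j+1}f)-F(\dot{S}_{j}f)\big)=\sum_{j\in\mathbb{Z}}m_{j}\,\dot{\Delta}_{j}f,\qquad m_{j}:=\int_{0}^{1}F'\big(\dot{S}_{j}f+\tau\dot{\Delta}_{j}f\big)\,d\tau,
\]
which follows from $\dot{S}_{j+1}f=\dot{S}_{j}f+\dot{\Delta}_{j}f$ and the fundamental theorem of calculus. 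Since $F(0)=0$ and $f\in L^{\infty}$, the series telescopes to $F(f)$ modulo polynomials; the convergence as $j\to\pm\infty$ in the homogeneous space is the usual technicality, handled exactly as in the unweighted case.

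Next I would record the pointwise derivative bounds $\|\partial^{\alpha}m_{j}\|_{L^{\infty}}\lesssim 2^{j|\alpha|}$ for each multi-index $\alpha$, with implicit constant depending only on $\|f\|_{L^{\infty}}$ and finitely many derivatives of $F$. These follow from Bernstein's inequality (Lemma~\ref{bernstein}), which gives $\|\partial^{\beta}\dot{S}_{j}f\|_{L^{\infty}}+\|\partial^{\beta}\dot{\Delta}_{j}f\|_{L^{\infty}}\lesssim 2^{j|\beta|}\|f\|_{L^{\infty}}$, together with the chain rule (Fa\`a di Bruno formula) and the fact that the derivatives of $F$ are bounded on $[-\|f\|_{L^{\infty}},\|f\|_{L^{\infty}}]$. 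Setting $g_{j}:=m_{j}\dot{\Delta}_{j}f$, I then estimate $\|\dot{\Delta}_{k}g_{j}\|_{L^{p}}$ in two regimes. For $k\leq j$ I use the crude bound $\|\dot{\Delta}_{k}g_{j}\|_{L^{p}}\lesssim\|g_{j}\|_{L^{p}}\lesssim\|\dot{\Delta}_{j}f\|_{L^{p}}$. For $k>j$ I exploit that the symbol of $\dot{\Delta}_{k}$ vanishes near the origin to write $\dot{\Delta}_{k}g_{j}=2^{-kM}\sum_{|\alpha|=M}\dot{\Delta}_{k}^{\alpha}\partial^{\alpha}g_{j}$ with uniformly $L^{p}$-bounded operators $\dot{\Delta}_{k}^{\alpha}$; the Leibniz rule and the bound on $\partial^{\alpha}m_{j}$ then give $\|\dot{\Delta}_{k}g_{j}\|_{L^{p}}\lesssim 2^{-(k-j)M}\|\dot{\Delta}_{j}f\|_{L^{p}}$ for any prescribed integer $M$.

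It remains to sum with the weight. Writing $\dot{\Delta}_{k}F(f)=\sum_{j}\dot{\Delta}_{k}g_{j}$, I would bound $\omega_{k}2^{ks}\|\dot{\Delta}_{k}F(f)\|_{L^{p}}$ and take the $\ell^{r}$ norm in $k$, splitting the sum at $j=k$. For $j\geq k$ the monotonicity $\omega_{k}\leq\omega_{j}$ yields the kernel $\tfrac{\omega_{k}}{\omega_{j}}2^{(k-j)s}\leq 2^{-(j-k)s}$, which is summable because $s>0$. For $j<k$ the $AF(\delta_{0})$ property gives $\tfrac{\omega_{k}}{\omega_{j}}\leq 2^{\delta_{0}(k-j)}$, so the kernel is at most $2^{(k-j)(s+\delta_{0}-M)}$, summable once $M>s+\delta_{0}$. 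In each regime the estimate is a discrete convolution of a fixed $\ell^{1}$ kernel against the sequence $\{\omega_{j}2^{js}\|\dot{\Delta}_{j}f\|_{L^{p}}\}_{j}\in\ell^{r}$, so Young's inequality for series delivers $\|F(f)\|_{\dot{B}_{p,r}^{s}(\omega)}\lesssim\|f\|_{\dot{B}_{p,r}^{s}(\omega)}$ with the asserted dependence of the constant.

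I expect the main (indeed only) obstacle to be the control of the weight ratio $\omega_{k}/\omega_{j}$ in the high-output-frequency regime $k>j$, where the weight may grow by a factor $2^{\delta_{0}(k-j)}$; this is exactly what forces the choice $M>s+\delta_{0}$ and explains why the constant depends on $\delta_{0}$. The convergence of the telescoping series in the homogeneous space is a standard technical point and does not affect the estimate.
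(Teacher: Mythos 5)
Your proposal is correct and follows essentially the same route as the paper: the same telescoping decomposition $F(f)=\sum_j m_j\dot{\Delta}_j f$ with $\|\partial^{\alpha}m_j\|_{L^\infty}\lesssim 2^{j|\alpha|}$, the same two-regime split (crude bound when the output frequency is at most the input frequency, a gain of $M$ derivatives when it exceeds it), the same weight bookkeeping via $\omega_k/\omega_j\leq 2^{\delta_0(k-j)}$ forcing $M>s+\delta_0$, and Young's inequality for series to conclude. The only difference is notational (you phrase the derivative gain by inverting the symbol of $\dot{\Delta}_k$ on its annulus, the paper writes the equivalent reverse Bernstein bound $\|\dot{\Delta}_j F_{j'}\|_{L^p}\lesssim 2^{-j|\alpha|}\|\dot{\Delta}_j D^{\alpha}F_{j'}\|_{L^p}$), so there is nothing further to add.
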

\begin{proof}
Since $\lim\limits_{j\rightarrow\infty}\|\dot{S}_{j}u-u\|_{L^{p}}=0$ and $F(0)=0$. From the mean value theorem, we introduce the telescopic series
\begin{equation}\label{Ff}
\begin{aligned}
F(f)=\sum_{j}(F(\dot{S}_{j+1}f)-F(\dot{S}_{j}f))
:=\sum_{j}m_{j}\dot{\Delta}_{j}f,
\end{aligned}
\end{equation}
with $m_{j}=\int_{0}^{1}F'(\dot{S}_{j}f+\theta\dot{\Delta}_{j}f)d\theta$. Let
$F_{j}:=m_{j}\dot{\Delta}_{j}f$. For $\gamma\in \mathbb{N}^{d}$, using the arguments of Lemma 2.63 in \cite{BaChDa11}, page 95, one can obtain $\|D^{\gamma}m_{j}\|_{L^{\infty}}\leq C(\gamma, F,\|f\|_{L^{\infty}})2^{j|\gamma|}$, from which and \eqref{Ff}, it follows that
\begin{equation*}
\begin{aligned}
\|\dot{\Delta}_{j}F(f)\|_{L^{p}}&\lesssim\sum_{j'\leq j}\|\dot{\Delta}_{j}F_{j'}\|_{L^{p}}+\sum_{j'>j}\|\dot{\Delta}_{j}F_{j'}\|_{L^{p}}\\
&\lesssim\sum_{j'\leq j}2^{-j|\alpha|}\|\dot{\Delta}_{j}D^{\alpha}F_{j'}\|_{L^{p}}+\sum_{j'>j}
\|\dot{\Delta}_{j}F_{j'}\|_{L^{p}}\\
&\lesssim\sum_{j'\leq j}2^{-j|\alpha|}\sum_{\beta\leq\alpha}\|D^{\alpha-\beta}m_{j'}\|_{L^{\infty}}\|D^{\beta}\dot{\Delta}_{j'}f\|_{L^{p}}+\sum_{j'>j}
\|\dot{\Delta}_{j'}f\|_{L^{p}}\|m_{j'}\|_{L^{\infty}}\\
&\lesssim\sum_{j'\leq j}2^{(j'-j)|\alpha|}\|\dot{\Delta}_{j'}f\|_{L^{p}}+\sum_{j'>j}
\|\dot{\Delta}_{j'}f\|_{L^{p}},\\
\end{aligned}
\end{equation*}
with $\alpha\in \mathbb{N}^{d}$ to be determined later.
Multiplying the above term by $2^{js}\omega_{j}$  implies
\begin{equation}\label{wj}
\begin{aligned}
2^{js}\omega_{j}&\|\dot{\Delta}_{j}F(f)\|_{L^{p}}\\
&\lesssim\sum_{j'\leq j}2^{(j'-j)(|\alpha|-s)}\frac{\omega_{j}}{\omega_{j'}}\omega_{j'}2^{j's}\|\dot{\Delta}_{j'}u\|_{L^{p}}  +\sum_{j'>j}
2^{(j-j')s}\frac{\omega_{j}}{\omega_{j'}}\omega_{j'}2^{j's}\|\dot{\Delta}_{j'}f\|_{L^{p}}\\
&\lesssim\sum_{j'\leq j}2^{(j'-j)(|\alpha|-s-\delta_{0})}\omega_{j'}2^{j's}\|\dot{\Delta}_{j'}u\|_{L^{p}} +\sum_{j'>j}
2^{(j-j')s}\omega_{j'}2^{j's}\|\dot{\Delta}_{j'}f\|_{L^{p}},\\
\end{aligned}
\end{equation}
 if we take $|\alpha|>0$ large enough such that $|\alpha|>s+\delta_{0}$ with $s>0$, summing up over $j$ for \eqref{wj}, from Young's inequality, we have $$\|F(f)\|_{\dot{B}_{p,r}^{s}(\omega)}\leq C\|f\|_{\dot{B}_{p,r}^{s}(\omega)}.
$$
We thus finish the proof of Lemma ~\ref{function1}.
\end{proof}
We prove a frequency envelope version of maximal regularity for the following heat equations:
\begin{equation} \label{heat}
\begin{cases}
\partial_t u - \mu\Delta u    =  f,  & \\
u(0,x)= u_0(x).&
\end{cases}
\end{equation}
The external source term $f=f(t,x)$ and the initial data $u_{0}=u_{0}(x)$ are given.
\begin{lemma}\label{heatm}
Let $T>0, s\in\mathbb{R}$ and $1\leq \rho_{1}, \rho,p, r\leq\infty$. Assume that  $\omega\in AF(\delta_0)$, $u_{0}\in\dot{B}_{p,r}^{s}(\omega)$ and $f\in \tilde{L}_{T}^{\rho}(\dot{B}_{p,r}^{s-2+\frac{2}{\rho}}(\omega))$.Then
\eqref{heat} has a uniqueness solution $u$ in $\tilde{L}_{T}^{1}(\dot{B}_{p,r}^{s+2}(\omega))\cap\tilde{L}_{T}^{\infty}
(\dot{B}_{p,r}^{s}(\omega))$ and there exists a constant $C$ depending only on $d$ such that for all $\rho_{1}\in[\rho,+\infty]$, we have
\begin{equation*}
\begin{aligned}
&\mu^{1/\rho_{1}}\|u\|_{\tilde{L}_{T}^{\rho_{1}}(\dot{B}_{p,r}^{s+\frac{2}{\rho_{1}}}(\omega))}
\leq
C(\|u_{0}\|_{\dot{B}_{p,r}^{s}(\omega)}
+\mu^{1/\rho-1}\|f\|_{\tilde{L}_{T}^{\rho}(\dot{B}_{p,r}^{s-2+\frac{2}{\rho}}(\omega))}).
\end{aligned}
\end{equation*}
\end{lemma}
\begin{proof}
The solution  $u$ of \eqref{heat} can be written as
 \begin{align}
  u(t)=e^{\mu t\Delta}u_{0}+\int_{0}^{t}e^{\mu (t-\tau)\Delta}f(\tau)d\tau,  \no
\end{align}
from which and  Lemma 2.4 in \cite{BaChDa11}, page 54, we  have for some $\kappa>0$,
\begin{equation}\label{qu}
\begin{aligned}
\|\dot{\Delta}_q u(t)\|_{L^{p}}\lesssim e^{-\kappa\mu 2^{2q}t}\|\dot{\Delta}_q u_{0}\|_{L^{p}}+\int_{0}^{t}e^{\mu\kappa2^{2q}(t-\tau)\Delta}\|\dot{\Delta}_q f(\tau)\|_{L^{p}}d\tau.
\end{aligned}
\end{equation}
Taking $L^{\rho_{1}}$ norm for \eqref{qu} with respect to $t$ gives
\begin{equation*}\label{2.30}
\begin{aligned}
\|\dot{\Delta}_q u(t)\|_{L_{T}^{\rho_{1}}(L^{p})}\lesssim (\frac{1-e^{-\kappa\mu T\rho_{1} 2^{2q}}}{\kappa\mu \rho_{1} 2^{2q}})^{\frac{1}{\rho_{1}}}\|\dot{\Delta}_q u_{0}\|_{L^{p}}+
(\frac{1-e^{-\kappa\mu T\rho_{2} 2^{2q}}}{\kappa\mu \rho_{2} 2^{2q}})^{\frac{1}{\rho_{2}}}
\|\dot{\Delta}_q f(\tau)\|_{L_{T}^{\rho}(L^{p})}
\end{aligned}
\end{equation*}
with $\frac{1}{\rho_{2}}=1+\frac{1}{\rho_{1}}-\frac{1}{\rho}$.
Multiplying the above terms by $2^{q(s+\frac{2}{\rho_{1}})}\omega_{q}$ and taking the $\ell^{r}(\mathbb{Z})$ norm, we conclude that
\begin{equation*}
\begin{aligned}
&\|u\|_{\tilde{L}_{T}^{\rho_{1}}(\dot{B}_{p,r}^{s+2/\rho_{1}}(\omega))}
\lesssim \left[\sum_{q\in \mathbb{Z}}\left(\frac{1-e^{-\kappa\mu T\rho_{1} 2^{2q}}}{\kappa\mu \rho_{1} }\right)^{\frac{r}{\rho_{1}}}2^{qsr}\omega_{q}^{r}\|\dot{\Delta}_q u_{0}\|_{L^{p}}^{r}\right]^{\frac{1}{r}}
\\
&+
\left[\sum_{q\in \Z}\left(\frac{1-e^{-\kappa\mu T\rho_{2} 2^{2q}}}{\kappa\mu \rho_{2} }\right)^{\frac{r}{\rho_{2}}}2^{q(s-2+\frac{2}{\rho})r}
\omega_{q}^{r}\|\dot{\Delta}_q f(\tau)\|_{L_{T}^{\rho}(L^{p})}^{r}\right]^{\frac{1}{r}}
\\
&
\lesssim\mu^{-\frac{1}{\rho_{1}}}\|u_{0}\|_{\dot{B}_{p,r}^{s}(\omega)}
+\mu^{\frac{1}{\rho}-1-\frac{1}{\rho_{1}}}\|f\|_{\tilde{L}_{T}^{\rho}(\dot{B}_{p,r}^{s-2+\frac{2}{\rho}}(\omega))},
\end{aligned}
\end{equation*}
which ensures that $u\in\tilde{L}_{T}^{1}(\dot{B}_{p,r}^{s+2}(\omega))\cap\tilde{L}_{T}^{\infty}
(\dot{B}_{p,r}^{s}(\omega))$ and yields the desired inequality.
\end{proof}
As an application of Lemma \ref{heatm}, we get similar estimates for the following Lamé system
\begin{equation} \label{lame}
\begin{cases}
\partial_t u - \mu\Delta u -(\lambda+\mu)\nabla  {\rm div}\, u =  f,  & \\
u(0,x)= u_0(x).&
\end{cases}
\end{equation}
\begin{corollary}\label{cor:heatm}
Let $T>0, \omega \in AF(\delta_0),  s\in\mathbb{R}, 1\leq p\leq\infty, \mu>0$ and $\lambda+2\mu>0$. Assume that $u_{0}\in\dot{B}_{p,1}^{s}(\omega)$ and $f\in \tilde{L}_{T}^{1}(\dot{B}_{p,1}^{s}(\omega))$. Then
\eqref{lame} has a unique solution $u$ in $\tilde{L}_{T}^{1}(\dot{B}_{p, 1}^{s+2}(\omega))\cap\tilde{L}_{T}^{\infty}
(\dot{B}_{p,1}^{s}(\omega))$ and there exists a constant $C$ depending only on $d$ such that
\begin{equation*}
\begin{aligned}
&\|u\|_{\tilde{L}_{T}^{\infty}(\dot{B}_{p,1}^{s}(\omega))}+\min\{\mu, 2\mu+\lambda\}\|u\|_{\tilde{L}_{T}^{1}(\dot{B}_{p,1}^{s+2}(\omega))}
\leq
C(\|u_{0}\|_{\dot{B}_{p,1}^{s}(\omega)}
+ \|f\|_{\tilde{L}_{T}^{1}(\dot{B}_{p,1}^{s}(\omega))}).
\end{aligned}
\end{equation*}
\end{corollary}
\begin{proof}
Let $\mathbb{P}:=I-\nabla(-\Delta)^{-1}{\rm div}$ and $\mathbb{Q}:=\nabla(-\Delta)^{-1}{\rm div}$ be the orthogonal projectors over divergence-free and potential vector fields. Applying $\mathbb{P}$ and $\mathbb{Q}$ to \eqref{lame}  gives
 \begin{equation} \label{plame}
\partial_t \mathbb{P}u - \mu\Delta \mathbb{P}u=\mathbb{P}  f, \  \ \ \,
\partial_t \mathbb{Q}u - (2\mu+\lambda)\Delta \mathbb{P}u=\mathbb{Q}  f.
\end{equation}
Using Lemma \ref{heatm} for \eqref{plame}  yields that
 \begin{equation*}
\begin{aligned}
&\|\mathbb{P}u\|_{\tilde{L}_{T}^{\infty}(\dot{B}_{p,1}^{s}(\omega))}+\mu
\|\mathbb{P}u\|_{\tilde{L}_{T}^{1}(\dot{B}_{p,1}^{s+2}(\omega))}
\leq
C(\|\mathbb{P}u_{0}\|_{\dot{B}_{p,r}^{s}(\omega)}
+\|\mathbb{P}f\|_{\tilde{L}_{T}^{1}
(\dot{B}_{p,1}^{s}(\omega))})
\end{aligned}
\end{equation*}
and
\begin{equation*}
\begin{aligned}
&\|\mathbb{Q}u\|_{\tilde{L}_{T}^{\infty}(\dot{B}_{p,1}^{s}(\omega))}+ (2\mu+\lambda)\|\mathbb{Q}u\|_{\tilde{L}_{T}^{1}(\dot{B}_{p,1}^{s+2}(\omega))}\leq
C(\|\mathbb{Q}u_{0}\|_{\dot{B}_{p,1}^{s}(\omega)}
+\|\mathbb{Q}f\|_{\tilde{L}_{T}^{1}(\dot{B}_{p,1}^{s}(\omega))}).
\end{aligned}
\end{equation*}
As $\mathbb{P}$ and $\mathbb{Q}$ are $0$ order multiplies, we conclude
the desired result.
\end{proof}
\begin{rem} \label{remark1}
In particular, when $\delta_{0}=0$ and $\omega_{i}=1$ for all $i\in \mathbb{Z}$, all the conclusions of Remark \ref{remark}, Lemma \ref{paraproduct}, Lemma \ref{transport}, Lemma \ref{function1}, corollary \ref{cor:heatm} also hold true. From (5) of Lemma~\ref{paraproduct}, when $d>2, 1\leq p<d$, there exists some constant $C>0$ such that
$$
\|fg\|_{\dot{B}_{p,1}^{-2+\frac{d}{p}}}\leq C\|f\|_{\dot{B}_{p,1}^{\frac{d}{p}}}\|g\|_{\dot{B}_{p,1}^{-2+\frac{d}{p}}},\quad
\|fg\|_{\dot{B}_{p,1}^{-2+\frac{d}{p}}}\leq C\|f\|_{\dot{B}_{p,1}^{-1+\frac{d}{p}}}\|g\|_{\dot{B}_{p,1}^{-1
+\frac{d}{p}}}.
$$
\end{rem}

\section{Frequency envelope method: $1\leq p<2d$}\label{prooflwp}

In this section, we shall prove Theorem \ref{lwp}. We fix $1\leq p<2d$ throughout this section. The local existence and uniqueness results have been established in \cite{D2014, RH2015}. The main task is to show the continuity of the solution map.

We first recall the existence and uniqueness results obtained in \cite{D2014, RH2015}.
Suppose that the initial data $(a_0,u_0) \in \dot{B}_{p,1}^{\frac{d}{p}}\times\dot{B}_{p,1}^{-1+\frac{d}{p}}$ and for a sufficiently small constant $c$, $\|a_{0}\|_{\dot{B}_{p,1}^{\frac{d}{p}}}\leq c$.  Then there exists some time $T$ satisfying
\begin{align*}
\sum_{j\in \mathbb{Z}}(1-e^{-C_02^{2j}T})
2^{j(-1+\frac{d}{p})}\|\dot{\Delta}_j u_0\|_{L^p}
\leq \frac{c}{1+\|u_0\|_{\dot{B}_{p,1}^{\frac{d}{p}}}},
\end{align*}
such that the system \eqref{eulercauchy} admits a unique  solution $(a,u)\in Z_p(T)$. Moreover, $(a,u)$ satisfies
\begin{align*}
\|a\|_{L_{T}^{\infty}(\dot{B}_{p,1}^{\frac{d}{p}})}\lesssim c,\,
\|u\|_{L^1_T(B^{1+\frac{d}{p}}_{p,1})}\lesssim \log(1+c),\,
\|u\|_{E_p(T)}\lesssim \|u_0\|_{\dot{B}_{p,1}^{\frac{d}{p}}}+T,
\end{align*}
where
$$E_p(T):=C([0,T];\dot{B}_{p,1}^{-1+\frac{d}{p}})\cap L^{1}([0,T]; \dot{B}_{p,1}^{1+\frac{d}{p}}).$$
Then we conclude that for a fixed
$(a_0,u_0)$, we can find a neighborhood $U$ and a common time $T(U)$ within which the constructed solution has uniform estimates. That is, there exists
a $\eta>0$ and a uniform time and  uniform bounds $C_1,C_2,C_3$, such that for any $(\tilde{a_0},\tilde{u_0})$ with $\|(\tilde{a_0},\tilde{u_0})-(a_0,u_0)\|_{\dot{B}_{p,1}^{\frac{d}{p}}\times \dot{B}_{p,1}^{-1+\frac{d}{p}}}\leq \eta$, the solution $(\tilde{a},\tilde{u})\in C([0,T];\dot{B}^{-1+\frac{d}{p}}_{p,1})\times E_p(T)$ exists, is unique and belongs to
$\mathbb{E}_{uni}$, where
\begin{align}
\label{uniform000}
\mathbb{E}_{uni}:=\{(f,g): \|f\|_{L_{T}^{\infty}(\dot{B}_{p,1}^{\frac{d}{p}})}\leq C_1c,\,
\|g\|_{L^1_T(B^{1+\frac{d}{p}}_{p,1})}\leq C_2\log(1+c),\,
\|g\|_{E_p(T)}\leq C_3\}.
\end{align}

Now we assume a sequence $(a_{0}^{n}, u_0^n)$
converges to $(a_{0},u_{0})$ in $\dot{B}_{p,1}^{\frac{d}{p}}\times \dot{B}_{p,1}^{-1+\frac{d}{p}}$. We claim that there exists a sequence $\{\omega_i\}$ of positive numbers which satisfies
\begin{equation}\label{weight}
\begin{aligned}
0<\omega_{i}\leq\omega_{i+1}\leq 2^{\frac{1}{2}}\omega_{i},\, \,\lim\limits_{i\rightarrow +\infty}\omega_i=+\infty,
\end{aligned}
\end{equation}
here we take $\delta_{0}=\frac{1}{2}$ in Definition~\ref{deft}, such that
\begin{align}\label{au-ini-omega}
\sup_{n}\big(\|a_{0}^{n}\|_{\dot{B}_{p,1}^{\frac{d}{p}}(\omega)}+
\|u_{0}^{n}\|_{\dot{B}_{p,1}^{-1+\frac{d}{p}}(\omega)}\big)<\infty.
\end{align}
This can be done by following the same lines as that of Lemma 4.1 in \cite{KoTz}. For the convenience of the readers, we give a sketch of the proof here. Define
$$A_{i}^{n}: =2^{j\frac{d}{p}}\|\Delta_{j}a_{0}^{n}\|_{L^{p}}+2^{j(-1+\frac{d}{p})}\|\Delta_{j}u_{0}^{n}\|_{L^{p}}.$$ Since $(a_{0}^{n}, u_0^n)$
 converges to $(a_{0}^{\infty},u_{0}^{\infty}):=(a_{0},u_{0})$ as $n\rightarrow\infty$ in $\dot{B}_{p,1}^{\frac{d}{p}}\times \dot{B}_{p,1}^{-1+\frac{d}{p}}$, which implies that
$(A_{i}^n)|_{i\in \mathbb{Z}}\rightarrow (A_i^\infty)|_{i\in \mathbb{Z}}$ in $l^1(\mathbb{Z})$. Then for all $k\in \mathbb{N}$, there exist a strictly monotone $\mathcal{N}_k$ such that $\sup\limits_{n}\sum\limits_{i=\mathcal{N}_k}^{+\infty}A_i^n<2^{-k}.$ If $i<\mathcal{N}_1$, we set $\omega_i=1$, whereas if $i\geq \mathcal{N}_1$, there exists a unique $k\in \mathbb{N}$ such that $\mathcal{N}_k\leq i<\mathcal{N}_{k+1}$, we set $\omega_i=2^{\frac{k}{2}}$. Then we can obtain that
\begin{align*}
\sum_{i=-\infty}^{+\infty}w_iA_i^n
=\sum_{i=-\infty}^{\mathcal{N}_1-1}A_i^n+\sum_{k=1}^{+\infty}\sum_{\mathcal{N}_k}^{\mathcal{N}_{k+1}}2^{\frac{k}{2}}A_i^n\leq
\sum_{i=-\infty}^{+\infty}A_i^n+\sum_{k=1}^{+\infty}2^{\frac{k}{2}}2^{-k},
\end{align*}
which thus implies that the uniform estimate \eqref{au-ini-omega} holds true.

The main task is to show $(a^n,u^n)$ converges to $(a,u)$ in $Z_p(T)$. We will prove this by three steps.

{\bf Step 1.} Uniform small tails estimate on frequency:
$\forall \varepsilon>0$, there exists $N>0$ such that
\begin{equation}\label{highfreN}
\begin{aligned}
\|(P_{>N}a^n, P_{>N}u^n))\|_{Z_p(T)}\leq \varepsilon, \quad \forall n.
\end{aligned}
\end{equation}
We will prove this using the following proposition.

\begin{prop}\label{au-omega-prior}
Assume $\omega\in AF(\delta_0)$.
Let $(a,u)$ be a solution of \eqref{eulercauchy} and $0\leq t\leq T$. Assume that
$\|a\|_{L^\infty_T(\dot{B}_{p,1}^{\frac{d}{p}})}\leq c_{0}$
for some sufficiently small $c_{0}$ and $(a_0,u_0) \in \dot{B}_{p,1}^{\frac{d}{p}}(\omega)\times\dot{B}_{p,1}^{\frac{d}{p}}(\omega)$. Then there holds
\begin{align*}
\|a\|_{\tilde{L}_{T}^{\infty}(\dot{B}_{p,1}^{\frac{d}{p}}(\omega))}&+\|u\|_{\tilde{L}_{T}^{\infty}(\dot{B}_{p,1}^{-1+\frac{d}{p}}(\omega))}+\|u\|_{L^1_T(\dot{B}_{p,1}^{1+\frac{d}{p}}(\omega))}\\ \leq &C\exp\big\{C\int_0^T(1+\|u\|_{\dot{B}_{p,1}^{1+\frac{d}{p}}})\mathrm{d}t\big\}
(\|a_{0}\|_{\dot{B}_{p,1}^{\frac{d}{p}}(\omega)}+\|u_0\|
_{\dot{B}_{p,1}^{-1+\frac{d}{p}}(\omega))}).
\end{align*}
\end{prop}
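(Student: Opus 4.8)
The plan is to run the frequency--envelope a priori estimate on the two equations of \eqref{eulercauchy} separately and then close a coupled Gronwall inequality. Abbreviate $\mathcal A_\omega(t):=\|a\|_{\tilde{L}^\infty_t(\dot{B}^{\frac{d}{p}}_{p,1}(\omega))}$ and $\mathcal U_\omega(t):=\|u\|_{\tilde{L}^\infty_t(\dot{B}^{-1+\frac{d}{p}}_{p,1}(\omega))}+\|u\|_{L^1_t(\dot{B}^{1+\frac{d}{p}}_{p,1}(\omega))}$; since $r=1$ the $\tilde{L}^1$ and $L^1$ norms coincide and the $L^\infty$ norm we need is dominated by its tilde version, so it suffices to bound $\mathcal A_\omega(T)+\mathcal U_\omega(T)$. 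The guiding principle is that in every nonlinear estimate the weight $\omega$ must sit on exactly one factor, the other being measured in an unweighted norm already controlled by the uniform bounds $\|a\|_{L^\infty_T(\dot B^{d/p}_{p,1})}\le c_0$, $\|u\|_{L^1_T(\dot B^{1+d/p}_{p,1})}\lesssim\log(1+c)$ and $\|u\|_{L^\infty_T(\dot B^{-1+d/p}_{p,1})}\lesssim 1$; this keeps the whole estimate \emph{linear} in the weighted quantities, which is what makes the final bound linear in the weighted data.

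First I would treat the density. Viewing the first equation as the transport equation \eqref{eq:TDep} with $v=u$, $\lambda=0$, $s=\frac{d}{p}$ and forcing $f=-(1+a)\,\mathrm{div}\,u$, Lemma~\ref{transport} applies (the window $-d\min(\tfrac1p,1-\tfrac1p)<\tfrac dp<1+\tfrac dp-\delta_0$ holds for $\delta_0<1$, in particular for the choice $\delta_0=\tfrac12$). I would use the pre-Gronwall form \eqref{a-pri-bef}, so as not to hide the $a$-dependence of $f$. Splitting $f=-\mathrm{div}\,u-a\,\mathrm{div}\,u$, the first piece is $\lesssim\|u\|_{\dot B^{1+\frac dp}_{p,1}(\omega)}$, while the weighted product law \eqref{bfg} gives $\|a\,\mathrm{div}\,u\|_{\dot B^{\frac dp}_{p,1}(\omega)}\lesssim c_0\|u\|_{\dot B^{1+\frac dp}_{p,1}(\omega)}+\|a\|_{\dot B^{\frac dp}_{p,1}(\omega)}\|u\|_{\dot B^{1+\frac dp}_{p,1}}$. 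Since $\|\nabla u\|_{\dot B^{\frac dp}_{p,\infty}\cap L^\infty}\lesssim\|u\|_{\dot B^{1+\frac dp}_{p,1}}$, the commutator term in \eqref{a-pri-bef} and the $a\,\mathrm{div}\,u$ term merge, yielding
\begin{equation*}
\mathcal A_\omega(t)\lesssim \|a_0\|_{\dot B^{\frac dp}_{p,1}(\omega)}+\mathcal U_\omega(t)+\int_0^t\|u(\tau)\|_{\dot B^{1+\frac dp}_{p,1}}\,\mathcal A_\omega(\tau)\,d\tau. \tag{T}
\end{equation*}

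Next the velocity. Apply Corollary~\ref{heatm} to $\partial_t u-\mathcal Au=F$ with $s=-1+\frac dp$ and $F=-u\cdot\nabla u-I(a)\mathcal Au-\nabla G(a)$, and estimate the three pieces in $\tilde{L}^1_t(\dot B^{-1+\frac dp}_{p,1}(\omega))$. For $u\cdot\nabla u=\mathrm{div}(u\otimes u)-(\mathrm{div}\,u)u$, the laws \eqref{bfg}/\eqref{Cfg} give the pointwise bound $\lesssim\|u\|_{\dot B^{\frac dp}_{p,1}}\|u\|_{\dot B^{\frac dp}_{p,1}(\omega)}$; interpolating $\dot B^{\frac dp}_{p,1}$ between $\dot B^{-1+\frac dp}_{p,1}$ and $\dot B^{1+\frac dp}_{p,1}$ and using Cauchy--Schwarz in time bounds its time integral by $\theta\,\mathcal U_\omega(t)$ with $\theta:=\|u\|_{L^\infty_T(\dot B^{-1+\frac dp}_{p,1})}^{1/2}\|u\|_{L^1_T(\dot B^{1+\frac dp}_{p,1})}^{1/2}$. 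For $I(a)\mathcal Au$, the weighted law \eqref{Cfg} with Lemma~\ref{function1} gives $\lesssim c_0\|u\|_{\dot B^{1+\frac dp}_{p,1}(\omega)}$; for $\nabla G(a)$, Lemma~\ref{function1} gives $\lesssim\|a\|_{\dot B^{\frac dp}_{p,1}(\omega)}$. As $\theta$ and $c_0$ are small (the former because $\|u\|_{L^1_T(\dot B^{1+d/p}_{p,1})}\lesssim\log(1+c)$ is small while $\|u\|_{L^\infty_T(\dot B^{-1+d/p}_{p,1})}$ is bounded), the first two contributions absorb into $\mathcal U_\omega(t)$, leaving
\begin{equation*}
\mathcal U_\omega(t)\lesssim \|u_0\|_{\dot B^{-1+\frac dp}_{p,1}(\omega)}+\int_0^t\mathcal A_\omega(\tau)\,d\tau. \tag{L}
\end{equation*}

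Finally I would close the loop: substituting (L) into (T) eliminates $\mathcal U_\omega$ from the right and gives
\[
\mathcal A_\omega(t)\lesssim \|a_0\|_{\dot B^{\frac dp}_{p,1}(\omega)}+\|u_0\|_{\dot B^{-1+\frac dp}_{p,1}(\omega)}+\int_0^t\big(1+\|u(\tau)\|_{\dot B^{1+\frac dp}_{p,1}}\big)\mathcal A_\omega(\tau)\,d\tau,
\]
so Gronwall's lemma produces the claimed exponential bound for $\mathcal A_\omega(T)$, and feeding it back into (L) controls $\mathcal U_\omega(T)$. I expect the main obstacle to be the convective term $u\cdot\nabla u$: being genuinely quadratic at the critical level $\dot B^{\frac dp}_{p,1}$, its weighted estimate has to be rendered linear in $\mathcal U_\omega$ by spending the small factor $\theta$, and one must check that absorbing the $\theta$- and $c_0$-terms to the left is legitimate, which requires knowing a priori that $\mathcal U_\omega(T)<\infty$. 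This is precisely the role of the qualitative hypothesis $u_0\in\dot B^{\frac dp}_{p,1}(\omega)$ (stronger than the $\dot B^{-1+\frac dp}_{p,1}(\omega)$ appearing in the bound): it guarantees that the weighted norms propagate and stay finite, so that the bootstrap genuinely closes rather than being merely formal.
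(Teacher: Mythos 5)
Your overall architecture is the same as the paper's: transport estimate for $a$ via the pre-Gronwall form \eqref{a-pri-bef}, Lam\'e maximal regularity for $u$ via Corollary~\ref{heatm}, weighted product laws that keep $\omega$ on exactly one factor, and a coupled Gronwall closure (the paper combines the two inequalities as $\sigma\times(\text{T})+(\text{L})$ with $\sigma$ small rather than substituting (L) into (T), but that is cosmetic). The density estimate (T) and your treatment of $I(a)\mathcal{A}u$ and $\nabla G(a)$ match the paper.

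The genuine issue is your treatment of the convective term. You estimate $\|u\cdot\nabla u\|_{\dot B^{-1+d/p}_{p,1}(\omega)}\lesssim\|u\|_{\dot B^{d/p}_{p,1}}\|u\|_{\dot B^{d/p}_{p,1}(\omega)}$ and then absorb its time integral into $\mathcal U_\omega$ using the small factor $\theta=\|u\|_{L^\infty_T(\dot B^{-1+d/p}_{p,1})}^{1/2}\|u\|_{L^1_T(\dot B^{1+d/p}_{p,1})}^{1/2}$. The smallness of $\theta$ rests on $\|u\|_{L^1_T(\dot B^{1+d/p}_{p,1})}\lesssim\log(1+c)$, which is a property of the particular solutions in $\mathbb{E}_{uni}$ coming from the existence theory, \emph{not} a hypothesis of Proposition~\ref{au-omega-prior}: the proposition only assumes $\|a\|_{L^\infty_T(\dot B^{d/p}_{p,1})}\leq c_0$ and allows $\int_0^T\|u\|_{\dot B^{1+d/p}_{p,1}}\,dt$ to be large (it appears inside the exponential in the conclusion precisely because it need not be small). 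So as written your absorption step fails for a general solution satisfying the stated hypotheses. The paper sidesteps this entirely by using the asymmetric product law \eqref{Cfg}, $\|u\cdot\nabla u\|_{\dot B^{-1+d/p}_{p,1}(\omega)}\lesssim\|\nabla u\|_{\dot B^{d/p}_{p,1}}\|u\|_{\dot B^{-1+d/p}_{p,1}(\omega)}$, which puts the unweighted factor at the $L^1_t$-regularity level $\dot B^{1+d/p}_{p,1}$ and the weight on the $L^\infty_t$-controlled quantity; the term then enters the Gronwall integrand with coefficient $\|u\|_{\dot B^{1+d/p}_{p,1}}$ and no absorption is needed. Your version is repairable without smallness — interpolate only the weighted factor and apply Young's inequality, $\|u\|_{\dot B^{d/p}_{p,1}}\|u\|_{\dot B^{d/p}_{p,1}(\omega)}\leq\epsilon\|u\|_{\dot B^{1+d/p}_{p,1}(\omega)}+C_\epsilon\|u\|_{\dot B^{d/p}_{p,1}}^2\|u\|_{\dot B^{-1+d/p}_{p,1}(\omega)}$ — at the price of an extra $\|u\|_{\dot B^{d/p}_{p,1}}^2$ in the Gronwall weight, which changes the exponential factor in the conclusion (it can be reconciled with the stated one only by again invoking a bound on $\|u\|_{L^\infty_T(\dot B^{-1+d/p}_{p,1})}$). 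Either fix the product estimate as in \eqref{Cfg}, or state the extra hypothesis you are using.
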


Let's first assume the above proposition holds true. Applying Proposition \ref{au-omega-prior} to the sequence $(a^n,u^n)$ and  $\omega$ defined in~\eqref{weight}, by the uniform bounds $(a^{n},u^{n})\in\mathbb{E}_{uni}$ and \eqref{au-ini-omega} we can get an improved a-priori estimate: there exists a uniform $C_4$, such that
\begin{align*}
\|a^n\|_{\tilde{L}_{T}^{\infty}(\dot{B}_{p,1}^{\frac{d}{p}}(\omega))}&+\|u^n\|_{\tilde{L}_{T}^{\infty}(\dot{B}_{p,1}^{-1+\frac{d}{p}}(\omega))}+\|u^n\|_{L^1_T(\dot{B}_{p,1}^{1+\frac{d}{p}}(\omega))}\leq C_4, \quad \forall \, n.
\end{align*}
$\forall\, \varepsilon>0$, since $\lim\limits_{i\rightarrow +\infty}\omega_i=+\infty$, there exists a  number $N>0$ such that $\omega_{N}\geq \frac{C_4}{\varepsilon}$. Then using the monotone property $w_i\leq w_{i+1}$ for $i\in \mathbb{Z}$, we have
\begin{equation}\label{highfresmall}
\begin{aligned}
\|a^n&\|_{\tilde{L}_{T}^{\infty}(\dot{B}_{p,1}^{\frac{d}{p}})}^{p>N}+\|u^n\|_{\tilde{L}_{T}^{\infty}(\dot{B}_{p,1}^{-1+\frac{d}{p}})}^{p>N}+\|u^n\|_{L^1_T(\dot{B}_{p,1}^{1+\frac{d}{p}})}^{p>N}
 \\ \leq &\frac{1}{\omega_{N}}(
\|a^n\|_{\tilde{L}_{T}^{\infty}(\dot{B}_{p,1}^{\frac{d}{p}}(\omega))}^{p>N}+\|u^n\|_{\tilde{L}_{T}^{\infty}(\dot{B}_{p,1}^{-1+\frac{d}{p}}(\omega))}^{p>N}+\|u^n\|_{L^1_T(\dot{B}_{p,1}^{1+\frac{d}{p}}(\omega))}^{p>N})
\leq \frac{C_4}{\omega_{N}} \leq \varepsilon, \quad \forall \, n.
\end{aligned}
\end{equation}
Hence taking advantage of embbedding \eqref{ebedding} for $q=\infty$, from \eqref{highfresmall}, we obtain the desired result \eqref{highfreN}.

\begin{proof}[Proof of Proposition \ref{au-omega-prior}]
We use the term (4) of Lemma \ref{paraproduct} and Remark \ref{remark} to obtain that
\begin{align*}
&\|(1+a) {\rm div}\,u\|_{\dot{B}_{p,1}^{\frac{d}{p}}(\omega)}\lesssim (1+\|a\|_{\dot{B}_{p,1}^{\frac{d}{p}}})\|{\rm div}\,u\|_{\dot{B}_{p,1}^{\frac{d}{p}}(\omega)}+\|a\|_{\dot{B}_{p,1}^{\frac{d}{p}}(\omega)}\|{\rm div}\,u\|_{\dot{B}_{p,1}^{\frac{d}{p}}}.
\end{align*}
From the estimate \eqref{a-pri-bef} in Lemma \ref{transport} with $s=\frac{d}{p}$, we deduce that
\begin{align}\label{a-omega}
\|a\|_{\tilde{L}_{t}^{\infty}(\dot{B}_{p,1}^{\frac{d}{p}}(\omega))}\leq \|a_{0}\|_{\dot{B}_{p,1}^{\frac{d}{p}}(\omega)}+C\int_0^t[&(1+\|a\|_{\dot{B}_{p,1}^{\frac{d}{p}}})\|  u\|_{\dot{B}_{p,1}^{1+\frac{d}{p}}(\omega)}+\| u\|_{\dot{B}_{p,1}^{1+\frac{d}{p}}}\|a\|_{\dot{B}_{p,1}^{\frac{d}{p}}(\omega)}]\mathrm{d}\tau.
\end{align}
We apply the term (5) of Lemma \ref{paraproduct} and Remark \ref{remark} to get that
$$\|u\cdot\nabla u \|_{\dot{B}_{p,1}^{-1+\frac{d}{p}}(\omega)}\lesssim \|\nabla u\|_{\dot{B}_{p,1}^{\frac{d}{p}}}\|u\|_{\dot{B}_{p,1}^{-1+\frac{d}{p}}(\omega)},$$\,
$$\|I(a)\mathcal{A}(u) \|_{\dot{B}_{p,1}^{-1+\frac{d}{p}}(\omega)}\lesssim \|I(a)\|_{\dot{B}_{p,1}^{\frac{d}{p}}}\|\mathcal{A}u\|_{\dot{B}_{p,1}^{-1+\frac{d}{p}}(\omega)}.
$$
Then by virtue of Lemma \ref{function1}, we have
\begin{align*}
&\|I(a)\mathcal{A}(u) \|_{\dot{B}_{p,1}^{-1+\frac{d}{p}}(\omega)}
\lesssim\|a\|_{\dot{B}_{p,1}^{\frac{d}{p}}}\|u\|_{\dot{B}_{p,1}^{1+\frac{d}{p}}(\omega)},\,
\|\nabla G(a)\|_{\dot{B}_{p,1}^{-1+\frac{d}{p}}(\omega)}\lesssim \|a\|_{\dot{B}_{p,1}^{\frac{d}{p}}(\omega)}.
\end{align*}
Together with taking $s=-1+\frac{d}{p}$ in corollary \ref{cor:heatm}, we get
\begin{align}\label{u-omega}
\|u\|_{\tilde{L}_{t}^{\infty}(\dot{B}_{p,1}^{-1+\frac{d}{p}}(\omega))}&+\|u\|_{L^1_t(\dot{B}_{p,1}^{1+\frac{d}{p}}(\omega))}\leq
C\|u_0\|_{\dot{B}_{p,1}^{-1+\frac{d}{p}}(\omega)}\no\\&
+C\int_0^t(\|u\|_{\dot{B}_{p,1}^{1+\frac{d}{p}}}\|  u\|_{\dot{B}_{p,1}^{-1+\frac{d}{p}}(\omega)}+\|a\|_{B_{p,1}^{\frac{d}{p}}}\| u\|_{\dot{B}_{p,1}^{1+\frac{d}{p}}(\omega)}+\|a\|_{\dot{B}_{p,1}^{\frac{d}{p}}(\omega)})\mathrm{d}\tau.
\end{align}
Recall that $\|a\|_{L^\infty_T(\dot{B}_{p,1}^{\frac{d}{p}})}\leq c_0.$ Hence, if  $c_{0}$ are sufficiently small such that
$Cc_0\leq \tfrac{1}{2}$, then  $$C\int_0^t\|a\|_{B_{p,1}^{\frac{d}{p}}}\| u\|_{\dot{B}_{p,1}^{1+\frac{d}{p}}(\omega)}d\tau,$$ which is the second term on the second line of \eqref{u-omega}, can be absorbed by $\|u\|_{L^1_t(\dot{B}_{p,1}^{1+\frac{d}{p}}(\omega))}$, which is the left-hand side of \eqref{u-omega}. Next,
choosing a small enough positive number $\sigma$, we consider $\sigma\times \eqref{a-omega}+\eqref{u-omega}$, then
\begin{align*}
\|a\|_{\tilde{L}_{t}^{\infty}(\dot{B}_{p,1}^{\frac{d}{p}}(\omega))}+\|u\|_{\tilde{L}_{t}^{\infty}(\dot{B}_{p,1}^{-1+\frac{d}{p}}(\omega))}&+\|u\|_{L^1_t(\dot{B}_{p,1}^{1+\frac{d}{p}}(\omega))}\leq C(\|a_{0}\|_{\dot{B}_{p,1}^{\frac{d}{p}}(\omega)}+\|u_0\|_{\dot{B}_{p,1}^{-1+\frac{d}{p}}(\omega)})\no\\&+C\int_0^t(1+\|u\|_{\dot{B}_{p,1}^{1+\frac{d}{p}}})  (\|a\|_{\dot{B}_{p,1}^{\frac{d}{p}}(\omega)}+\| u\|_{\dot{B}_{p,1}^{-1+\frac{d}{p}}(\omega)})\mathrm{d}\tau.
\end{align*}
Applying the Gronwall inequality completes the proof of Proposition \ref{au-omega-prior}.
\end{proof}

{\bf Step 2. } A difference estimate of velocity field.

The high-frequency component has been handled in Step 1. Next, we will need a difference estimate to handle the low-frequency component. A new difference estimate of velocity field in the following, which play a special role in dealing with the low-frequency component. We need to prove
\begin{equation}\label{L1Linfi}
\begin{aligned}
\|u^n-u\|_{L^1_T(L^\infty)}
\leq C \|(a_{0}^n-a_{0},u_{0}^n-u_{0})\|_{\mathbb{X}_p},
\end{aligned}
\end{equation}
here $C$ may depend on $T$.
This will be proved using the Lagrangian approach in \cite{D2014}. In fact, let $X^n$ be the flow of $u^n$ and $X$ be the flow of $u$ defined as
\begin{align*}
X^n(t,y)=y+\int_0^t u^n(\tau,X^n(\tau,y))\mathrm{d}\tau,
\end{align*}
\begin{align*}
X(t,y)=y+\int_0^t u(\tau,X(\tau,y))\mathrm{d}\tau.
\end{align*}
For all $t\in [0,T]$, the maps $X^n(t,\cdot)$ and  $X(t,\cdot)$ are both $C^1$-diffeomorphism over $\mathbb{R}^d$.
Denote
\begin{align*}
\bar{a}^n=a^n(t,X^n(t,y)),\,\, \bar{u}^n=u^n(t,X^n(t,y)),
\end{align*}
\begin{align*}
\bar{a}=a(t,X(t,y)),\,\, \bar{u}=u(t,X(t,y)).
\end{align*}
According to Proposition 3.7 in \cite{D2014}, $(\bar{a}^n,\bar{u}^n)$ belongs to $C([0,T];\dot{B}^{\frac{d}{p}}_{p,1})\times E_p(T)$, and satisfies the Lagrangian coordinate version of the system \eqref{eulercauchy}, one can see~\cite{D2014}, page 757.
Then by Theorem 2.2 in \cite{D2014}, we know the fact that the flow map of this Lagrangian version system is Lipschitz continuous from $\dot{B}_{p,1}^{\frac{d}{p}}\times\dot{B}_{p,1}^{-1+\frac{d}{p}}$ to $C([0,T];\dot{B}^{\frac{d}{p}}_{p,1})\times E_p(T)$, namely
\begin{align}\label{lag-con}
\|(\bar a^n-\bar a, \bar u^n-\bar u)\|_{Z_p(T)}\leq C\|(a_{0}^n-a_{0},u_{0}^n-u_{0})\|_{\mathbb{X}_p}.
\end{align}
Next, using \eqref {lag-con}, the interpolation inequality and the embedding $\dot{B}_{p,1}^{\frac{d}{p}}\hookrightarrow  L^\infty$, we have
\begin{equation}\label{infinite}
\begin{aligned}
\|\bar{u}^n-\bar{u}\|_{L^1_T(L^\infty)}
\lesssim T^{\frac{1}{2}} \|\bar{u}^n-\bar{u}\|_{L^2_T(\dot{B}_{p,1}^{\frac{d}{p}})} \lesssim& T^{\frac{1}{2}} \|\bar{u}^n-\bar{u}\|_{L^\infty_T(\dot{B}_{p,1}^{-1+\frac{d}{p}})}^{\frac{1}{2}}\|\bar{u}^n-\bar{u}\|_{L^1_T(\dot{B}_{p,1}^{1+\frac{d}{p}})}^{\frac{1}{2}}\\
\lesssim& T^{\frac{1}{2}}\|(a_{0}^n-a_{0},u_{0}^n-u_{0})\|_{\mathbb{X}_p}.
\end{aligned}
\end{equation}
By the definitions of $X^n,\,\, X$ and  the uniform bounds $(a^{n},u^{n})\in\mathbb{E}_{uni}$,  it can be checked that
\begin{equation}\label{unlinfin}
\begin{aligned}
\|u^n(X(t,y))-u^n(X^n(t,y))\|_{L^1_T(L^\infty)} \lesssim &\|\nabla u^n\|_{L^1_T(\dot{B}_{p,1}^{\frac{d}{p}})}\|X^n(t,y)-X(t,y)\|_{L^\infty_T(L^\infty)}\\
\lesssim& \|\bar{u}^n-\bar{u}\|_{L^1_T(L^\infty)}.
\end{aligned}
\end{equation}
Going back to the Eulerian coordinate, by \eqref{unlinfin}  and \eqref{infinite} we easily infer
\begin{equation*}
\begin{aligned}
\|u^n(y)-u(y)\|_{L^1_T(L^\infty)}=&\|u^n(X(t,y))-u(X(t,y))\|_{L^1_T(L^\infty)}\\
\leq&\|u^n(X(t,y))-u^n(X^n(t,y))\|_{L^1_T(L^\infty)}+\|\bar{u}^n-\bar{u}\|_{L^1_T(L^\infty)}\\
\lesssim& \|\bar{u}^n-\bar{u}\|_{L^1_T(L^\infty)}\les \|(a_{0}^n-a_{0},u_{0}^n-u_{0})\|_{\mathbb{X}_p},
\end{aligned}
\end{equation*}
from which, we conclude \eqref{L1Linfi}.

{\bf Step 3.} Low frequency difference estimates

\begin{prop}\label{diff-1ow}
Let $(a_1,u_1)$ and $(a_2,u_2)$
be two solutions of the system \eqref{eulercauchy} and $m_{0}\in \mathbb{Z}$. Assume that
$\|a_i\|_{L^\infty_T(\dot{B}_{p,1}^{\frac{d}{p}})}\leq \sigma_0$ for $i=1,2$
with some sufficiently small $\sigma_0$ and $T>0$. Then the low frequency part of the difference $(\delta a,\delta u)=(a_2-a_1,u_2-u_1)$ satisfies
 \begin{align*}
&\|\delta a\|_{\tilde{L}^\infty_T(\dot{B}_{p,1}^{\frac{d}{p}})}^{P_{\leq m_0}}+\|\delta u\|_{\tilde{L}^\infty_T(\dot{B}_{p,1}^{-1+\frac{d}{p}})}^{P_{\leq m_0}}
+\|\delta u\|_{L^1_T(\dot{B}_{p,1}^{1+\frac{d}{p}})}^{P_{\leq m_0}}\\
\leq& C\exp\left\{\int_0^t\alpha(\tau)\mathrm{d}\tau\right\}\Big(\| \delta a_0\|_{\dot{B}_{p,1}^{\frac{d}{p}}}^{P_{\leq m_0}}+
   \|\delta u_0\|_{\dot{B}_{p,1}^{-1+\frac{d} {p}}}^{P_{\leq m_0}}
+2^{m_0} \|\delta u\|_{L^1_T(L^\infty)}\|a_1\|_{L^\infty_t(\dot{B}_{p,1}^{\frac{d}{p}})}
\\
&\qquad +\|\delta u\|_{L^1_t(\dot{B}_{p,1}^{1+\frac{d}{p}})}^{P_{>m_0}}+\int_0^T\alpha(\tau)\mathrm{d}\tau
   \big(\|\delta a\|_{L^\infty_T(\dot{B}_{p,1}^{\frac{d}{p}})}^{P_{>m_0}}+\|\delta u\|_{L^\infty_T(\dot{B}_{p,1}^{-1+\frac{d}{p}})}^{P>_{m_0}}\big)\Big)
\end{align*}
with $\alpha(\tau)=1+ \| u_1(\tau)\|_{\dot{B}_{p,1}^{1+\frac{d}{p}}} +\| u_2(\tau)\|_{\dot{B}_{p,1}^{1+\frac{d}{p}}}+\| u_1(\tau)\|_{\dot{B}_{p,1}^{\frac{d}{p}}}^2 +\| u_2(\tau)\|_{\dot{B}_{p,1}^{\frac{d}{p}}}^2.$
\end{prop}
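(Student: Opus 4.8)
\emph{Setup.} The plan is to work directly with the system satisfied by the difference $(\delta a,\delta u)=(a_2-a_1,u_2-u_1)$. Subtracting the two copies of \eqref{eulercauchy} and regrouping yields a transport equation for the density,
\[
\partial_t\delta a+u_2\cdot\nabla\delta a=-\delta u\cdot\nabla a_1-(1+a_2){\rm div}\,\delta u-\delta a\,{\rm div}\, u_1,
\]
and a Lamé system for the velocity,
\[
\partial_t\delta u-\mathcal{A}\delta u=-u_2\cdot\nabla\delta u-\delta u\cdot\nabla u_1-I(a_2)\mathcal{A}\delta u-\big(I(a_2)-I(a_1)\big)\mathcal{A}u_1-\nabla\big(G(a_2)-G(a_1)\big).
\]
I would apply $\dot{\Delta}_k$ to each equation and run the blockwise $L^p$ estimates underlying Lemma~\ref{transport} together with the maximal regularity of Corollary~\ref{heatm}, but sum the resulting inequalities only over $k\leq m_0$. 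The truncation of the sum is the whole point: the transport commutator $[u_2\cdot\nabla,\dot{\Delta}_k]\delta a$ and the low-frequency projections of the nonlinear products both reach arbitrarily high frequencies of $(\delta a,\delta u)$, and it is exactly these contributions that surface on the right-hand side as $\|\delta u\|^{P_{>m_0}}_{L^1_T(\dot{B}_{p,1}^{1+\frac{d}{p}})}$ and as $\int_0^T\alpha\,(\|\delta a\|^{P_{>m_0}}+\|\delta u\|^{P_{>m_0}})$, while the low-frequency self-interactions stay on the left and are eventually absorbed by Gronwall's lemma.

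\emph{The decisive term.} The heart of the matter is the coupling $\delta u\cdot\nabla a_1$ in the density equation: a naive product estimate in $\dot{B}_{p,1}^{\frac{d}{p}}$ would force control of $\delta u$ in $\dot{B}_{p,1}^{1+\frac{d}{p}}$, which is \emph{not} Lipschitz-controlled by the data difference. This is precisely where the new $L^1_T(L^\infty)$ estimate \eqref{L1Linfi} of Step~2 is designed to intervene. Decomposing $\delta u\cdot\nabla a_1$ \`a la Bony, in the paraproduct piece $\dot{T}_{\delta u^j}\partial_j a_1$ the output frequency is carried by $a_1$, so Bernstein's inequality (Lemma~\ref{bernstein}) lets me keep $\delta u$ in $L^\infty$ and convert the truncation $k\leq m_0$ into a single gain $2^{m_0}$:
\[
\sum_{k\leq m_0}2^{k\frac{d}{p}}\|\dot{\Delta}_k(\dot{T}_{\delta u^j}\partial_j a_1)\|_{L^p}\lesssim 2^{m_0}\|\delta u\|_{L^\infty}\sum_{\ell\leq m_0+4}2^{\ell\frac{d}{p}}\|\dot{\Delta}_\ell a_1\|_{L^p}\lesssim 2^{m_0}\|\delta u\|_{L^\infty}\|a_1\|_{\dot{B}_{p,1}^{\frac{d}{p}}}.
\]
After integrating in time this is exactly the term $2^{m_0}\|\delta u\|_{L^1_T(L^\infty)}\|a_1\|_{L^\infty_t(\dot{B}_{p,1}^{\frac{d}{p}})}$ in the statement. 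The remaining paraproduct and remainder pieces of $\delta u\cdot\nabla a_1$ lack this favourable frequency balance, so I would absorb them using the smallness of $a_1$ (recall $\|a_1\|_{\dot{B}_{p,1}^{\frac{d}{p}}}\leq\sigma_0$) after splitting $\delta u$ into its $P_{\leq m_0}$ part (fed into the Gronwall weight $\alpha$) and its $P_{>m_0}$ part (sent to the error bucket $\|\delta u\|^{P_{>m_0}}_{L^1_T(\dot{B}_{p,1}^{1+\frac{d}{p}})}$).

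\emph{The remaining nonlinearities and closing.} The term $(1+a_2){\rm div}\,\delta u$ I would split through $\delta u=P_{\leq m_0}\delta u+P_{>m_0}\delta u$: the high part gives $\|\delta u\|^{P_{>m_0}}_{L^1_T(\dot{B}_{p,1}^{1+\frac{d}{p}})}$, while the $a_2$-weighted low part, together with the viscous perturbation $I(a_2)\mathcal{A}\delta u$ in the velocity equation, is absorbed into the left-hand side using $\|a_2\|_{\dot{B}_{p,1}^{\frac{d}{p}}}\leq\sigma_0$ and Lemma~\ref{function1}. The genuinely quadratic terms $\delta a\,{\rm div}\, u_1$, $\delta u\cdot\nabla u_1$, $(I(a_2)-I(a_1))\mathcal{A}u_1$ and $\nabla(G(a_2)-G(a_1))$ I would estimate with the product and composition rules (Lemma~\ref{paraproduct}(4)(5), Lemma~\ref{function1}), which produce the factors $\|u_i\|_{\dot{B}_{p,1}^{1+\frac{d}{p}}}$ and $\|u_i\|_{\dot{B}_{p,1}^{\frac{d}{p}}}^2$ making up $\alpha(\tau)$; their low-frequency parts are absorbed by Gronwall, their high-frequency parts produce the $\int_0^T\alpha\,(\|\delta a\|^{P_{>m_0}}+\|\delta u\|^{P_{>m_0}})$ on the right.

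\emph{Main obstacle.} Finally, forming $\sigma\times(\text{density estimate})+(\text{velocity estimate})$ with $\sigma$ small, choosing $\sigma_0$ small enough to absorb all $a_i$-weighted and low-frequency self-coupling terms into the left-hand side, and applying Gronwall's inequality closes the estimate. The main difficulty throughout is the loss-of-derivative coupling $\delta u\cdot\nabla a_1$; everything is arranged so that the single derivative it costs is paid by the Bernstein factor $2^{m_0}$ against the $L^1_T(L^\infty)$ difference estimate \eqref{L1Linfi}, which is the structural reason that bound was isolated as a separate step.
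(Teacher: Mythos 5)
Your proposal is correct and follows essentially the same route as the paper's proof: the same difference system, the same Bony decomposition of $\delta u\cdot\nabla a_1$ with the paraproduct piece $\dot{T}_{\delta u}\nabla a_1$ paying its derivative through the $2^{m_0}$ Bernstein gain against $\|\delta u\|_{L^1_T(L^\infty)}$, the same low/high splitting of the self-coupling terms, absorption via the smallness of $a_i$ and the $\sigma$-weighted combination of the two estimates, and Gronwall to close. The only cosmetic difference is your regrouping of the convective and viscous perturbation terms in the velocity equation (e.g.\ $u_2\cdot\nabla\delta u+\delta u\cdot\nabla u_1$ versus the paper's $\delta u\cdot\nabla u_2+u_1\cdot\nabla\delta u$), which is algebraically equivalent and yields the same bounds since both $a_1$ and $a_2$ are assumed small.
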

\begin{proof}
The difference $(\delta a,\delta u)$ reads
\begin{align}\label{energy-00}
 \left\{
 \begin{array}{l}
 \partial_t\delta a+u_2\cdot\nabla \delta a=I_1+I_2+I_3,\\
 \partial_t \delta a-\mathcal{A}\delta u=J_1+J_2+J_3+J_4+J_5,\\
 \delta a|_{t=0}=a_{20}-a_{10}, \delta u|_{t=0}=u_{20}-u_{10},
 \end{array}
 \right.
 \end{align}
 with  $I_1=-\delta u\cdot\nabla a_1$, $I_2=-(1+a_2){\rm div}\,\delta u$, $I_3
 =-\delta a{\rm div}\, u_1$, $J_1=-\delta u\cdot\nabla u_2$,
 $J_2=-u_1\cdot \nabla\delta u$,
 $J_3=-(I(a_2)-I(a_1))\mathcal{A} u_2$,
 $J_4=-I(a_1)\mathcal{A}\delta u$ and
 $J_5=-\nabla(G(a_2)-G(a_1))$.
Using the Bony decomposition, we write
\begin{align*}
    I_1=-\dot{T}_{\delta u}\nabla a_1-\dot{T}_{\nabla a_1}\delta u-\dot{R}(\delta u,\nabla a_1):=I_{11}+I_{12}+I_{13}.
\end{align*}
For $0\leq t\leq T$, applying the operator $\dot{\Delta}_j$ to the first equation of \eqref{energy-00}, taking the $L^p$ estimates, proceeding similar proof of
Lemma~\ref{transport}, we end up with
\begin{align} \label{delta j a}
\|\dot{\Delta}_j \delta a(t)\|_{L^{p}}  \leq &\|\Delta_j \delta a_{0}\|_{L^{p}}+\int_0^t \|\dot{\Delta}_j I_{11}\|_{L^p}\mathrm{d} \tau \no\\&+\int_0^t \big(\|\dot{\Delta}_j \sum_{i=2}^{3}(I_{1i}+I_{i})\|_{L^{p}}+\frac{\|{\rm div}\, u_2\|_{L^{\infty}}}{p}\|\dot{\Delta}_j \delta a\|_{L^{p}}+\|R_j\|_{L^{p}}\big) d\tau,
\end{align} where $R_j=
[u_{2}\cdot \nabla,\dot{\Delta}_j]\delta a.$
Multiplying the above inequality by $2^{j\frac{d}{p}}$, summing up over $-\infty<j\leq m_0$. Note that summing up over $-\infty<j\leq m_0$ for the second line of \eqref{delta j a} can be controlled by summing up over $-\infty<j<+\infty$.  Therefore, we conclude that
 \begin{align} \label{differ-a-low}
  \|\delta a\|_{\tilde{L}^\infty_t(\dot{B}_{p,1}^{\frac{d}{p}})}^{P_{\leq m_0}}
\leq &\| \delta a_0\|_{\dot{B}_{p,1}^{\frac{d}{p}}}^{P_{\leq m_0}}+\int_0^t \| I_{11}\|_{\dot{B}_{p,1}^{\frac{d}{p}}}^{P_{\leq m_0}}\mathrm{d} \tau
   \no\\
 &+\int_0^t(\|\sum_{i=2}^{3}(I_{1i}+I_{i})\|_{\dot{B}_{p,1}^{\frac{d}{p}}}+\|\nabla u_2\|_{\dot{B}_{p,1}^{\frac{d}{p}}}
   \|\delta a\|_{\dot{B}_{p,1}^{\frac{d}{p}}})\mathrm{d}\tau,
\end{align}
where we have used the following fact $\sum\limits_{j\in \mathbb{Z}}2^{j\frac{d}{p}}\|R_j\|_{L^p}\lesssim \|\nabla u_2\|_{\dot{B}_{p,1}^{\frac{d}{p}}}
   \|\delta a\|_{\dot{B}_{p,1}^{\frac{d}{p}}}$ (see \eqref{CR} with $s=\frac{d}{p}, r=1, w_{j}=1$ for all $j\in \mathbb{Z}$). Similarly, for $\delta u$, applying the operator $\dot{\Delta}_j$ to the second equation of \eqref{energy-00}, using the same arguments for Lemma \ref{heatm}, we have
 \begin{align}\label{differ-u-low}
  \|\delta u\|_{\tilde{L}^\infty_t(\dot{B}_{p,1}^{-1+\frac{d}{p}})}^{P_{\leq m_0}}
+\|\delta u\|_{L^1_t(\dot{B}_{p,1}^{1+\frac{d}{p}})}^{P_{\leq m_0}}\leq C
   \|\delta u_0\|_{\dot{B}_{p,1}^{-1+\frac{d}{p}}}^{P_{\leq m_0}}
+C\int_0^t\sum_{k=1}^5\|J_k\|_{\dot{B}_{p,1}^{-1+\frac{d}{p}}}\mathrm{d}\tau.
\end{align}
Using Lemma \ref{paraproduct}, Remark \ref{remark} and  Remark \ref{remark1}, we know that the product of two functions maps
$\dot{B}_{p,1}^{\frac{d}{p}}\times\dot{B}_{p,1}^{-1+\frac{d}{p}}$ in
$\dot{B}_{p,1}^{-1+\frac{d}{p}}$ for $1\leq p<2d$, and $\dot{B}_{p,1}^{\frac{d}{p}}$ is a Banach space, which are used frequently in the following. First, for $I_{11}$, we have
\begin{align*}
\sigma\|I_{11}\|_{L^1_t(
\dot{B}_{p,1}^{\frac{d}{p}})}^{P_{\leq m_0}}\leq \sigma 2^{m_0}\|I_{11}\|_{L^1_t(
\dot{B}_{p,1}^{-1+\frac{d}{p}})}
\leq C\sigma 2^{m_0} \|\delta u\|_{L^1_t(L^\infty)}\|\nabla a_1\|_{L^\infty_t(\dot{B}_{p,1}^{-1+\frac{d}{p}})}.
\end{align*}
Notice that $\|a_i\|_{L^\infty_T(\dot{B}_{p,1}^{\frac{d}{p}})}\leq \sigma_0, i=1,2$. If $\sigma_{0}$ is sufficiently small, then we have
\begin{align*}
C\|J_4\|_{L^1_t(\dot{B}_{p,1}^{-1+\frac{d}{p}})}
\leq C\|I(a_1)\|_{L^\infty_t(\dot{B}_{p,1}^{\frac{d}{p}})}
\|\mathcal{A} \,\delta u\|_{L^1_t(\dot{B}_{p,1}^{-1+\frac{d}{p}})}
\leq \frac{1}{4}\|\delta u\|_{L^1_t(\dot{B}_{p,1}^{1+\frac{d}{p}})}.
\end{align*}
If $\sigma$ is sufficiently small, by \cite[Proposition 2.3]{RH2015} and  Remark \ref{remark1} we discover that
\begin{align*}
\sigma\|I_{12}&,I_{13},I_{2}\|_{L^1_t(\dot{B}_{p,1}^{\frac{d}{p}})}\\
\leq &C\sigma  \|\nabla a_1\|_{L^\infty_t(\dot{B}_{\infty,\infty}^{-1})}\|\delta u\|_{L^1_t(\dot{B}_{p,1}^{1+\frac{d}{p}})}+\sigma(1+ \|a_2\|_{L^\infty_t(\dot{B}_{p,1}^{\frac{d}{p}})})\|{\rm div}\,\delta u\|_{L^1_t(\dot{B}_{p,1}^{\frac{d}{p}})}\\
\leq &C\sigma(1+\|a_1\|_{L^\infty_t(\dot{B}_{p,1}^{\frac{d}{p}})}+\|a_2\|_{L^\infty_t(\dot{B}_{p,1}^{\frac{d}{p}})})\|\delta u\|_{L^1_t(\dot{B}_{p,1}^{1+\frac{d}{p}})}
\leq \frac{1}{4}\|\delta u\|_{L^1_t(\dot{B}_{p,1}^{1+\frac{d}{p}})}.
\end{align*}
Next, using the interpolation inequality and the H\"{o}lder inequality, we have
\begin{align*} \label{I_1}
C\|J_1,J_2\|_{L^1_t(\dot{B}_{p,1}^{-1+\frac{d}{p}})}
\leq& C
 \int_0^t(
 \|\delta u\|_{\dot{B}_{p,1}^{\frac{d}{p}}}\|\nabla u_2\|_{\dot{B}_{p,1}^{-1+\frac{d}{p}}}+\|u_1\|_{\dot{B}_{p,1}^{\frac{d}{p}}}\|\nabla \delta u\|_{\dot{B}_{p,1}^{-1+\frac{d}{p}}})\mathrm{d}\tau\no\\
 \leq &
 C\int_0^t(
\|u_1\|_{\dot{B}_{p,1}^{\frac{d}{p}}}^2+
 \|u_2\|_{\dot{B}_{p,1}^{\frac{d}{p}}}^2)
    \|\delta u\|_{\dot{B}_{p,1}^{-1+\frac{d}{p}}}
    \mathrm{d}\tau+\frac{1}{4} \|\delta u\|_{L^1_t(\dot{B}_{p,1}^{1+\frac{d}{p}})}.
\end{align*}
Finally, the direct calculation gives
\begin{align*}
\sigma\|I_3\|_{L^1_t(\dot{B}_{p,1}^{\frac{d}{p}})}+C\|J_3,J_5\|_{L^1_t(\dot{B}_{p,1}^{-1+\frac{d}{p}})}\leq C
\|\delta a\|_{\dot{B}_{p,1}^{\frac{d}{p}}}(1+ \| u_1\|_{\dot{B}_{p,1}^{1+\frac{d}{p}}} +\| u_2\|_{\dot{B}_{p,1}^{1+\frac{d}{p}}}).
 \end{align*}
Keep in mind that
\begin{align*}
  \|\delta u\|_{L^1_t(\dot{B}_{p,1}^{1+\frac{d}{p}})}=\|\delta u\|_{L^1_t(\dot{B}_{p,1}^{1+\frac{d}{p}})}^{P_{\leq m_0}}+\|\delta u\|_{L^1_t(\dot{B}_{p,1}^{1+\frac{d}{p}})}^{P_{>m_0}}.
 \end{align*}
We observe that $\frac{3}{4}\|\delta u\|_{L^1_t(\dot{B}_{p,1}^{1+\frac{d}{p}})}^{P_{\leq m_0}}$  can be absorbed by the left hand-side of \eqref{differ-u-low}. Plugging all the above inequalities into  the inequality $\sigma\times \eqref{differ-a-low}+\eqref{differ-u-low}$ for $\sigma>0$ small enough, we conclude that
 \begin{align*}
  \|\delta a\|_{\tilde{L}^\infty_t(\dot{B}_{p,1}^{\frac{d}{p}})}^{P_{\leq m_0}}+&\|\delta u\|_{\tilde{L}^\infty_t(\dot{B}_{p,1}^{-1+\frac{d}{p}})}^{P_{\leq m_0}}
+\|\delta u\|_{L^1_t(\dot{B}_{p,1}^{1+\frac{d}{p}})}^{P_{\leq m_0}}\no\\
\leq &C\big(\| \delta a_0\|_{\dot{B}_{p,1}^{\frac{d}{p}}}^{P_{\leq m_0}}+
   \|\delta u_0\|_{\dot{B}_{p,1}^{-1+\frac{d}{p}}}^{P_{\leq m_0}}
+2^{m_0} \|\delta u\|_{L^1_t(L^\infty)}\|a_1\|_{L^\infty_t(\dot{B}_{p,1}^{\frac{d}{p}})}\no\\
&+\|\delta u\|_{L^1_t(\dot{B}_{p,1}^{1+\frac{d}{p}})}^{P_{>m_0}}
 +\int_0^t\alpha(\tau)
   (\|\delta a\|_{\dot{B}_{p,1}^{\frac{d}{p}}}+\|\delta u\|_{\dot{B}_{p,1}^{-1+\frac{d}{p}}})\mathrm{d}\tau\big)
\end{align*}
with $\alpha(\tau)=1+ \| u_1(\tau)\|_{\dot{B}_{p,1}^{1+\frac{d}{p}}} +\| u_2(\tau)\|_{\dot{B}_{p,1}^{1+\frac{d}{p}}}+\| u_1(\tau)\|_{\dot{B}_{p,1}^{\frac{d}{p}}}^2 +\| u_2(\tau)\|_{\dot{B}_{p,1}^{\frac{d}{p}}}^2.$
Again we  divide  $ \|\delta a\|_{\dot{B}_{p,1}^{\frac{d}{p}}}$ and $\|\delta u\|_{\dot{B}_{p,1}^{-1+\frac{d}{p}}}$ for above term into the following parts: \begin{align*}
  \|\delta a\|_{\dot{B}_{p,1}^{\frac{d}{p}}}=\|\delta a\|_{\dot{B}_{p,1}^{\frac{d}{p}}}^{P_{\leq m_0}}+\|\delta a\|_{\dot{B}_{p,1}^{\frac{d}{p}}}^{P_{>m_0}},\,
   \|\delta u\|_{\dot{B}_{p,1}^{-1+\frac{d}{p}}}=\|\delta u\|_{\dot{B}_{p,1}^{-1+\frac{d}{p}}}^{P_{\leq m_0}}+\|\delta u\|_{\dot{B}_{p,1}^{-1+\frac{d}{p}}}^{P_{>m_0}}.
 \end{align*}
Then applying the Gronwall inequality for $0\leq t\leq T$ completes the proof of Proposition \ref{diff-1ow}.
\end{proof}

We are ready to prove that $(a^n,u^n)$ converges to $(a,u)$ in $Z_p(T)$. For any $\varepsilon>0$, by Step 1, there exists a $K>0$ such that
\begin{equation}\label{highfrediffer}
\begin{aligned}
  \|(a^n-a, u^n-u)\|_{Z_p(T)}^{P_{>K}}\lesssim\varepsilon, \quad \forall\, n.
\end{aligned}
\end{equation}
Fixing $K$, by Proposition \ref{diff-1ow} in Step 3, we have
\begin{equation*}
\begin{aligned}
  \|(a^n-a, u^n-u)\|_{Z_p(T)}^{P_{\leq K}}\lesssim
\|(a_0^n-a_0, u^n_0-u_0)\|_{\mathbb{X}_{p}}+2^{K}\|u^n-u\|_{L^1_T(L^\infty)}
+\varepsilon,
\end{aligned}
\end{equation*}
from which, by \eqref{L1Linfi} in Step 2, we get
\begin{equation}\label{lowfrediffer}
\begin{aligned}
  \|(a^n-a, u^n-u)\|_{Z_p(T)}^{P_{\leq K}}\lesssim (2^{K}+1)
\|(a_0^n-a_0, u^n_0-u_0)\|_{\mathbb{X}_{p}}
+\varepsilon.
\end{aligned}
\end{equation}
By virtue of the convergence of the initial data,  there exits a positive number $N_1$, if $n>N_1$, then \begin{equation}\label{initialconven}
\begin{aligned}
  \|a_0^n-a_0\|_{\dot{B}_{p,1}^{\frac{d}{p}}}
+\|u_0^n-u_0\|_{\dot{B}_{p,1}^{-1+\frac{d}{p}}}\lesssim \frac{\varepsilon}{2^{K}+1}.
\end{aligned}
\end{equation}
Combining
\eqref{lowfrediffer} and  \eqref{initialconven} yields that there exists $M=M(K,N_1, \varepsilon)>0$ such that if $n>M$, then
\begin{equation}\label{lowerdiffer}
\begin{aligned}
  \|(a^n-a, u^n-u)\|_{Z_p(T)}^{P_{\leq K}}\lesssim\varepsilon.
\end{aligned}
\end{equation}
Therefore taking advantage of  \eqref{highfrediffer} and  \eqref{lowerdiffer}, we end up with
\begin{equation*}
\begin{aligned}
\|(a^n-a, u^n-u)\|_{Z_p(T)}\lesssim \varepsilon, \quad \forall\, n>M.
\end{aligned}
\end{equation*}
This implies  $(a^n,u^n)$ converges to $(a,u)$ in $Z_p(T)$.

\appendix

\section{Bona-Smith method: $1\leq p<d$}\label{proofapelwp}

In the appendix, we prove the continuity of solution map to the compressible Navier-Stokes equations \eqref{eulercauchy} in the critical Besov space $\dot{B}_{p,1}^{\frac{d}{p}}\times \dot{B}_{p,1}^{-1+\frac{d}{p}}$ for $1 \leq p < d$, using Bona-Smith argument \cite{Bona-Smith}. This is to illustrate the main difficulties we encountered and the advantage of using the frequency envelope method.
\begin{theorem} \label{apelwp}
Let $1 \leq p <d,\, d\geq3,\,  (a_{0}, u_{0})\in \mathbb{X}_p$ with $
\|a_{0}\|_{\dot{B}_{p,1}^{\frac{d}{p}}}\leq c$ for a small enough constant $c>0$.
Then there exists a neighborhood $U$ of $(a_{0}, u_{0})$ in $\mathbb{X}_p$ and $T=T(U)>0$, such that for any data $(\tilde{a}_0, \tilde{u}_0)\in U$, the Cauchy problem \eqref{eulercauchy} has a unique solution
  $$(\tilde{a}, \tilde{u}):=S_{T}(\tilde{a}_0,\tilde{u}_0)\in C([0,T];\dot{B}_{p,1}^{\frac{d}{p}})\times(C([0,T];\dot{B}_{p,1}^{-1+\frac{d}{p}})\cap L^{1}([0,T]; \dot{B}_{p,1}^{1+\frac{d}{p}})).$$
Moreover, the solution map $S_T$ is continuous from $U$ to $C([0,T];\X_p)$. Precisely, for any $\varepsilon>0$, there exists $\tilde{\delta}_{0}>0$ such that for any $(\tilde{a}_0, \tilde{u}_0) \in U$ with  $\|a_0 -\tilde{a}_0\|_{{B}_{p,1}^{\frac{d}{p}}} + \|u_{0}-\tilde{u}_0\|_{{B}_{p,1}^{-1+\frac{d}{p}}} \leq\tilde{\delta}_{0}$, then
  $
\|S_{T}(a_{0}, u_0)-S_{T}(\tilde{a}_{0}, \tilde{u}_{0})\|_{Z_p(T)}\leq\varepsilon
$.
\end{theorem}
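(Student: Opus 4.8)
The existence, uniqueness and uniform bounds are already in hand: by Danchin's results and the uniform estimates recorded before \eqref{uniform000}, there is a neighbourhood $U$ of $(a_0,u_0)$ and a common existence time $T$ so that every solution issuing from data in $U$ lies in the bounded set $\mathbb{E}_{uni}\subset Z_p(T)$. The only point to establish is therefore the \emph{continuity} of $S_T$, and the plan is to run the Bona--Smith scheme. I would fix a large integer $N$, insert the frequency-truncated data $(P_{\leq N}a_0,P_{\leq N}u_0)$ and $(P_{\leq N}\tilde a_0,P_{\leq N}\tilde u_0)$, and split
$$
\|S_T(a_0,u_0)-S_T(\tilde a_0,\tilde u_0)\|_{Z_p(T)}\leq A+B+C,
$$
where $A=\|S_T(a_0,u_0)-S_T(P_{\leq N}a_0,P_{\leq N}u_0)\|_{Z_p(T)}$, where $C$ is its analogue for $(\tilde a_0,\tilde u_0)$, and where $B=\|S_T(P_{\leq N}a_0,P_{\leq N}u_0)-S_T(P_{\leq N}\tilde a_0,P_{\leq N}\tilde u_0)\|_{Z_p(T)}$ is the difference of the two smoothed flows.

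For the end terms $A$ and $C$ I would invoke the approximation estimate \eqref{eq:diffest2}, which yields $A\les\|(I-P_{\leq N})(a_0,u_0)\|_{\X_p}$ and $C\les\|(I-P_{\leq N})(\tilde a_0,\tilde u_0)\|_{\X_p}$; since $(a_0,u_0)\in\X_p$ the first tail tends to $0$ as $N\to\infty$, while the triangle inequality $\|(I-P_{\leq N})(\tilde a_0,\tilde u_0)\|_{\X_p}\leq\|(I-P_{\leq N})(a_0,u_0)\|_{\X_p}+\|(\tilde a_0-a_0,\tilde u_0-u_0)\|_{\X_p}$ keeps $C$ controlled uniformly for nearby data. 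For the middle term $B$ both data are band-limited to frequency $\lesssim 2^N$, so a persistence-of-regularity argument (running the a priori estimates of Lemma~\ref{transport} and Corollary~\ref{heatm} at regularity $s=1$ with $\omega\equiv1$) bounds both smoothed solutions in $\X_p^1$ by $C\,2^{N}$, uniformly over $U$. Combining this with the weak Lipschitz estimate \eqref{eq:diffest} of Proposition~\ref{diff-1-local}, which bounds the difference of the smoothed solutions in $C([0,T]:\X_p^{-1})$ by $\|(a_0-\tilde a_0,u_0-\tilde u_0)\|_{\X_p^{-1}}$, and interpolating $\X_p=[\X_p^{-1},\X_p^{1}]_{1/2}$, I obtain
$$
B\les 2^{N/2}\,\|(a_0-\tilde a_0,u_0-\tilde u_0)\|_{\X_p}^{1/2},
$$
so for fixed $N$ the middle term vanishes as $(\tilde a_0,\tilde u_0)\to(a_0,u_0)$. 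An $\varepsilon/3$ bookkeeping then closes the argument: first choose $N$ large to make $A$ and the tail part of $C$ below $\varepsilon/3$, then choose $\tilde\delta_0$ small so that $B$ and the remaining part of $C$ fall below $\varepsilon/3$.

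The main obstacle is the approximation estimate \eqref{eq:diffest2}, a difference estimate at the \emph{full} critical regularity $\X_p$ (equivalently $Z_p(T)$) — exactly the kind of estimate that fails for generic data and forces us away from Lipschitz continuity. What makes it workable is that the data difference $(I-P_{\leq N})(a_0,u_0)$ is purely high-frequency: writing the system \eqref{energy-00} for $(\delta a,\delta u)=(a-a^N,u-u^N)$ and running the transport estimate for $\delta a$ in $\dot B_{p,1}^{d/p}$ together with the Lam\'e estimate for $\delta u$ in $\dot B_{p,1}^{-1+d/p}$, the derivative-losing terms such as $\delta u\cdot\nabla a^N$ can be closed by pairing the smallness of the high-frequency tail against the extra regularity of the smoothed solution $a^N$, via the product laws of Lemma~\ref{paraproduct} and Remark~\ref{remark1}. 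A secondary obstacle, and the reason the whole scheme is confined to $1\leq p<d$, is the weak Lipschitz estimate \eqref{eq:diffest}: for $p\geq d$ the velocity regularity $-2+\frac{d}{p}$ in $\X_p^{-1}$ becomes too negative for the bilinear estimates of Remark~\ref{remark1} to hold, and $B$ can no longer be handled by interpolation. This is precisely the restriction that the frequency-envelope argument of Section~\ref{prooflwp} is designed to remove.
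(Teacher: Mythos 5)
Your decomposition into $A+B+C$, the treatment of the end terms $A,C$ via the approximation estimate \eqref{eq:diffest2} (itself resting on a persistence bound for the smoothed solutions and a lower-regularity difference estimate applied to the \emph{high-frequency} data tail), and the final $\varepsilon$-bookkeeping all match the paper's Steps 1--3. The gap is in your treatment of the middle term $B$. You propose to combine the weak Lipschitz estimate \eqref{eq:diffest} in $\X_p^{-1}$ with the $\X_p^{1}$ persistence bound $\lesssim 2^{N}$ and interpolate. But \eqref{eq:diffest} controls the solution difference by $\|(P_{\leq N}(a_0-\tilde a_0),P_{\leq N}(u_0-\tilde u_0))\|_{\X_p^{-1}}$, and this quantity is \emph{not} controlled by $\|(a_0-\tilde a_0,u_0-\tilde u_0)\|_{\X_p}$: the data difference here is supported in frequencies $\leq 2^{N}$, and lowering the regularity index by one on low frequencies costs an unbounded factor $2^{-j}$ as $j\to-\infty$ (indeed $P_{\leq N}f$ need not even lie in $\dot B^{-1+d/p}_{p,1}$ for $f\in\dot B^{d/p}_{p,1}$, and $f^n\to f$ in $\X_p$ does not imply $P_{\leq N}(f^n-f)\to 0$ in $\X_p^{-1}$). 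So your claimed bound $B\les 2^{N/2}\|(a_0-\tilde a_0,u_0-\tilde u_0)\|_{\X_p}^{1/2}$ does not follow, and $B$ need not tend to $0$ along your argument. Note the asymmetry: the same lowering of regularity is harmless for the end terms precisely because there the data difference $(I-P_{\leq N})(a_0,u_0)$ is purely high-frequency.

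The paper closes $B$ differently, and this is the point worth internalizing: it proves a Lipschitz difference estimate at the \emph{critical} regularity $Z_p(T)$ for two solutions one of which is smooth (Proposition~\ref{diff-1-local}, case (1)), with constant $C\exp\{C\int_0^T\|a_N\|_{\dot B^{1+d/p}_{p,1}}^2\,d\tau\}\leq C\exp\{C2^{2N}\}$ coming from the term $\delta u\cdot\nabla a_N$. For fixed $N$ this constant is finite, so $B\leq Ce^{C2^{2N}}\|(a_0-\tilde a_0,u_0-\tilde u_0)\|_{\X_p}$, and one then chooses $\tilde\delta_0$ depending on $N$. No lower-regularity norm of the smoothed data difference is ever needed. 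Your argument can be repaired by replacing the interpolation step with this case (1) estimate; everything else you wrote, including the identification of $p<d$ as the obstruction inherited from the product laws behind \eqref{eq:diffest} and \eqref{lodiffc}, is consistent with the paper.
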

\begin{rem}\label{globalcon}
We remark that using Bona-Smith argument~\cite{Bona-Smith}, we may obtain the solution map $S_T$ is continuous in some hybrid-Besov spaces for any $T>0$. The reader may refer to \cite{FD2010, CMZ2010, H2011, RH2015} for the global existence and uniqueness. Precisely, let  $k_{0}\in\mathbb{Z},\, s\in\mathbb{R},\,  z^{\ell}=\dot{S}_{k_{0}+1}z$ and $z^{h}=z-z^{\ell}$.
We endow those spaces with the norms:
\begin{equation*}
\begin{aligned}
&\|(a,u)\|_{Y_{p}}:=\|(a,u)^{\ell}\|_{\tilde{L}_{T}^{\infty}(\dot{B}_{2,1}^{-1
+\frac{d}{2}})}+
\|(a,u)^{\ell}\|_{\tilde{L}_{T}^{1}(\dot{B}_{2,1}^{1+\frac{d}{2}})}
\\
& \ \ +
\|a^{h}\|_{\tilde{L}_{T}^{\infty}(\dot{B}_{p,1}^{\frac{d}{p}})}+
\|a^{h}\|_{\tilde{L}_{T}^{1}(\dot{B}_{p,1}^{\frac{d}{p}})}+
\|u^{h}\|_{\tilde{L}_{T}^{\infty}(\dot{B}_{p,1}^{-1+\frac{d}{p}})}+
\|u^{h}\|_{\tilde{L}_{T}^{1}(\dot{B}_{p,1}^{1+\frac{d}{p}})}\\
\end{aligned}
\end{equation*}
and
\begin{equation*}
\begin{aligned}
\|(a_{0},u_{0})\|_{Y_{p}^{0}}:=\|(a_{0},u_{0})^{\ell}\|_{\dot{B}_{2,1}^{-1+\frac{d}{2}}}
+\|a_{0}^{h}\|_{\dot{B}_{p,1}^{\frac{d}{p}}}+\|u_{0}^{h}\|_{\dot{B}_{p,1}^{-1+\frac{d}{p}}}.
\end{aligned}
\end{equation*}
Let $d\geq3,\, 2\leq p\leq\min\{4,\frac{2d}{d-2}\}, p<d, (\tilde{u}_{0}, \tilde{a}_{0})\in\mathbb{X}_p$ with besides $(\tilde{a}_{0}^{\ell},\tilde{u}_{0}^{\ell})\in\dot{B}_{2,1}^{-1+\frac{d}{2}}$ satisfy
$\|(\tilde{a}_{0}, \tilde{u}_{0})\|_{Y_{p}^{0}}\leq c$ for a small constant $c>0$
and  some $k_{0}\in \mathbb{Z}$. There exists a neighbourhood $U$ of $(\tilde{a}_{0}, \tilde{u}_{0})$ in $Y_{p}^{0}$, such that for any data $(a_0, u_0)\in U$, the Cauchy problem \eqref{eulercauchy} has a unique solution
$(a, u):=S_{T}(a_0,u_0)\in Y_{p}$
for any $T>0$. Moreover, the solution map $S_T$ is continuous from $U$ to $Y_{p}$ for any $T>0$.
\end{rem}

We first give some difference estimates to the compressible Navier-Stokes equations \eqref{eulercauchy}.
\begin{prop}\label{diff-1-local}
Let $ d\geq 3$, $1\leq p<d$, and let
$(a_i,u_i)\in C([0,T];\dot{B}_{p,1}^{\frac{d}{p}})\times \big(C([0,T];\dot{B}_{p,1}^{-1+\frac{d}{p}})\cap L^1([0,T];\dot{B}_{p,1}^{1+\frac{d}{p}})\big)$
be two solutions of the system \eqref{eulercauchy} with
the initial data $(a_{0i},u_{0i})\in \dot{B}_{p,1}^{\frac{d}{p}}\times \dot{B}_{p,1}^{-1+\frac{d}{p}}$, $i=1, 2$.
Assume that
$\|a_1\|_{L^\infty_T(\dot{B}_{p,1}^{\frac{d}{p}})}\leq c_{0}$
for some sufficient small $c_{0}$.

$(1)$ If $a_1\in L^2_T(\dot{B}_{p,1}^{1+\frac{d}{p}})$, then for the difference $(\delta a,\delta u)=(a_2-a_1,u_2-u_1)$, we have
  \begin{equation}\label{lodiffb}
\begin{aligned}
    \|\delta a\|_{L^\infty_T(\dot{B}_{p,1}^{\frac{d}{p}})}
   +\|\delta u&\|_{L^\infty_T(\dot{B}_{p,1}^{-1+\frac{d}{p}})}+\|\delta u\|_{ L^1_T(\dot{B}_{p,1}^{1+\frac{d}{p}})}\\
   \leq &C\exp\big\{C\int_0^T\|a_1(\tau)\|_{\dot{B}_{p,1}^
   {1+\frac{d}{p}}}^2\mathrm{d}\tau \big\} (\|\delta a_0\|_{\dot{B}_{p,1}^{\frac{d}{p}}}
   +\|\delta u_0\|_{\dot{B}_{p,1}^{-1+\frac{d}{p}}}).
\end{aligned}
\end{equation}

$(2)$ If $a_1\in L^\infty_T(\dot{B}_{p,1}^{1+\frac{d}{p}})$, and in addition,  $(\delta a_0,\delta u_0)\in \dot{B}_{p,1}^{-1+\frac{d}{p}}\times \dot{B}_{p,1}^{-2+\frac{d}{p}}$, then we obtain
   \begin{equation}\label{lodiffc}
\begin{aligned}
    \|\delta& a\|_{L^\infty_T(\dot{B}_{p,1}^{\frac{d}{p}})}
   +\|\delta u\|_{L^\infty_T(\dot{B}_{p,1}^{-1+\frac{d}{p}})}+
   \|\delta u\|_{L^1_T(\dot{B}_{p,1}^{1+\frac{d}{p}})}
  \\ \leq &C(\|\delta a_0\|_{\dot{B}_{p,1}^{\frac{d}{p}}}
   +\|\delta u_0\|_{\dot{B}_{p,1}^{-1+\frac{d}{p}}}+\|a_1\|_{L^\infty_T(\dot{B}_{p,1}^{1+\frac{d}{p}})}
   (\|\delta a_0\|_{\dot{B}_{p,1}^{-1+\frac{d}{p}}}
   +\|\delta u_0\|_{\dot{B}_{p,1}^{-2+\frac{d}{p}}})\big).
 \end{aligned}
\end{equation}
Here $C$ depends on $T$, $\|u_i(t)\|_{L^\infty_T(\dot{B}_{p,1}^{-1+\frac{d}{p}})
\cap L^1_T(\dot{B}_{p,1}^{1+\frac{d}{p}})}$ and $\|a_i\|_{L^\infty_T(\dot{B}_{p,1}^{\frac{d}{p}})}$ 
 for $i=1, 2$.
  \end{prop}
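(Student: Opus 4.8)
The plan is to work directly with the difference system for $(\delta a,\delta u)=(a_2-a_1,u_2-u_1)$, which has exactly the structure \eqref{energy-00}: a (damped) transport equation $\partial_t\delta a+u_2\cdot\nabla\delta a=I_1+I_2+I_3$ and a Lamé system $\partial_t\delta u-\mathcal{A}\delta u=J_1+\cdots+J_5$, with $I_1=-\delta u\cdot\nabla a_1$, $I_2=-(1+a_2)\mathrm{div}\,\delta u$, $I_3=-\delta a\,\mathrm{div}\,u_1$ and the $J_k$ as listed. I would apply Lemma~\ref{transport} with $s=\frac{d}{p}$ to $\delta a$ and Corollary~\ref{heatm} with $s=-1+\frac{d}{p}$ to $\delta u$ (taking $\omega\equiv1$, $\delta_0=0$, which is legitimate by Remark~\ref{remark1}), and estimate each source term in $L^1_T(\dot{B}_{p,1}^{d/p})$ resp.\ $L^1_T(\dot{B}_{p,1}^{-1+d/p})$ via Lemma~\ref{paraproduct}, Remark~\ref{remark}, Remark~\ref{remark1} and the composition Lemma~\ref{function1}. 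Most terms are routine: $J_4=-I(a_1)\mathcal{A}\delta u$ and the block $\{\dot{T}_{\nabla a_1}\delta u,\dot{R}(\delta u,\nabla a_1),I_2\}$ carry a factor $\|a_1\|_{\dot{B}_{p,1}^{d/p}}\le c_0$ and are absorbed into $\tfrac14\|\delta u\|_{L^1_T(\dot{B}_{p,1}^{1+d/p})}$ on the left, while the convection terms $J_1,J_2$ and the differences $J_3,J_5,I_3$ produce a Gronwall weight bounded by $\alpha(\tau)=1+\sum_i\big(\|u_i\|_{\dot{B}_{p,1}^{1+d/p}}+\|u_i\|_{\dot{B}_{p,1}^{d/p}}^2\big)$, which is integrable on $[0,T]$ and so is absorbed into the constant $C$. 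The single genuinely delicate term is the paraproduct $I_{11}:=-\dot{T}_{\delta u}\nabla a_1$, because $\nabla a_1$ only lives at the borderline regularity $-1+\frac{d}{p}$ (the density solves a transport equation with no parabolic smoothing) and pairs badly with the low-regularity $\delta u$. The two parts diverge precisely in how $I_{11}$ is treated.

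For part $(1)$ I would estimate $\|I_{11}\|_{\dot{B}_{p,1}^{d/p}}\les\|\delta u\|_{L^\infty}\|\nabla a_1\|_{\dot{B}_{p,1}^{d/p}}\les\|\delta u\|_{\dot{B}_{p,1}^{d/p}}\|a_1\|_{\dot{B}_{p,1}^{1+d/p}}$, using the paraproduct bound $\|\dot{T}_fg\|_{\dot{B}_{p,1}^{s}}\les\|f\|_{L^\infty}\|g\|_{\dot{B}_{p,1}^{s}}$ and $\dot{B}_{p,1}^{d/p}\hookrightarrow L^\infty$. Interpolating $\|\delta u\|_{\dot{B}_{p,1}^{d/p}}\le\|\delta u\|_{\dot{B}_{p,1}^{-1+d/p}}^{1/2}\|\delta u\|_{\dot{B}_{p,1}^{1+d/p}}^{1/2}$ and applying Young's inequality splits this into $\varepsilon\|\delta u\|_{\dot{B}_{p,1}^{1+d/p}}$ (absorbed on the left) and $C_\varepsilon\|a_1\|_{\dot{B}_{p,1}^{1+d/p}}^2\|\delta u\|_{\dot{B}_{p,1}^{-1+d/p}}$, the latter being an extra Gronwall weight $\|a_1\|_{\dot{B}_{p,1}^{1+d/p}}^2$. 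Since $a_1\in L^2_T(\dot{B}_{p,1}^{1+d/p})$ by hypothesis, $\int_0^T\|a_1\|_{\dot{B}_{p,1}^{1+d/p}}^2\,d\tau<\infty$, and Gronwall's lemma closes the $\X_p$ estimate with exactly the exponential factor displayed in \eqref{lodiffb} (the $\|u_i\|$ weights being swallowed into $C$).

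For part $(2)$ I would first establish the lower-regularity difference estimate in $\X_p^{-1}=\dot{B}_{p,1}^{-1+d/p}\times\dot{B}_{p,1}^{-2+d/p}$,
\[
\|\delta a\|_{L^\infty_T(\dot{B}_{p,1}^{-1+d/p})}+\|\delta u\|_{L^\infty_T(\dot{B}_{p,1}^{-2+d/p})}+\|\delta u\|_{L^1_T(\dot{B}_{p,1}^{d/p})}\le C\big(\|\delta a_0\|_{\dot{B}_{p,1}^{-1+d/p}}+\|\delta u_0\|_{\dot{B}_{p,1}^{-2+d/p}}\big),
\]
which is \eqref{eq:diffest} and is where the restriction $1\le p<d$, $d\ge3$ enters, since it forces $\dot{B}_{p,1}^{-2+d/p}$ into the admissible range of the product laws of Remark~\ref{remark1}. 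Running the same scheme one derivative lower, the critical term is now harmless: $\|I_{11}\|_{\dot{B}_{p,1}^{-1+d/p}}\les\|\delta u\|_{\dot{B}_{p,1}^{d/p}}\|a_1\|_{\dot{B}_{p,1}^{d/p}}$ with $\|a_1\|_{\dot{B}_{p,1}^{d/p}}\le c_0$ small, so it is absorbed into the left-hand parabolic gain $\|\delta u\|_{L^1_T(\dot{B}_{p,1}^{d/p})}$ and the lower estimate closes by Gronwall with the integrable weight $\alpha$. With this in hand I return to the $\X_p$ estimate and bound $\int_0^T\|I_{11}\|_{\dot{B}_{p,1}^{d/p}}\,d\tau\les\|a_1\|_{L^\infty_T(\dot{B}_{p,1}^{1+d/p})}\|\delta u\|_{L^1_T(\dot{B}_{p,1}^{d/p})}$, pulling $\|a_1\|_{\dot{B}_{p,1}^{1+d/p}}$ out in $L^\infty_t$ and inserting the lower-regularity bound for $\|\delta u\|_{L^1_T(\dot{B}_{p,1}^{d/p})}$. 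This is precisely the term $\|a_1\|_{L^\infty_T(\dot{B}_{p,1}^{1+d/p})}\big(\|\delta a_0\|_{\dot{B}_{p,1}^{-1+d/p}}+\|\delta u_0\|_{\dot{B}_{p,1}^{-2+d/p}}\big)$ in \eqref{lodiffc}; the remaining terms yield the clean $\X_p$-data contribution after Gronwall, giving a constant $C$ depending only on $T$, $\|u_i\|$ and $\|a_i\|_{L^\infty_T(\dot{B}_{p,1}^{d/p})}$.

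The main obstacle is exactly the handling of $I_{11}=-\dot{T}_{\delta u}\nabla a_1$. Because the density enjoys no smoothing, $\nabla a_1$ sits at the borderline regularity $-1+\frac{d}{p}$, and it is this one term that both dictates the extra hypotheses on $a_1$ (the $L^2_T$ resp.\ $L^\infty_T$ control in $\dot{B}_{p,1}^{1+d/p}$) and forces the restriction $p<d$, via the failure of the product law into $\dot{B}_{p,1}^{-2+d/p}$ when $p\ge d$. Everything else reduces to bookkeeping: verifying that each product lands in the correct Besov space through Lemma~\ref{paraproduct} and that all Gronwall weights are integrable in time.
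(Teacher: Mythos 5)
Your proposal is correct and follows essentially the same route as the paper's proof: the same difference system, the same transport/Lam\'e a priori estimates with the small parameter $\sigma$ and the smallness of $\|a_1\|_{L^\infty_T(\dot B^{d/p}_{p,1})}$ for absorption, the same interpolation-plus-Young treatment of $\delta u\cdot\nabla a_1$ yielding the $\|a_1\|_{\dot B^{1+d/p}_{p,1}}^2$ Gronwall weight in part (1), and the same two-tier scheme in part (2) (first the $\X_p^{-1}$ estimate, then re-inserting it to control $I_1$ in $L^1_T(\dot B^{d/p}_{p,1})$). The only cosmetic difference is that you Bony-decompose $I_1$ and isolate $\dot T_{\delta u}\nabla a_1$ as the sole delicate piece, whereas the appendix proof bounds $I_1$ as a whole product; both give the same estimates.
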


\begin{proof}
From the compressible Navier-Stokes equations \eqref{eulercauchy}, the difference $(\delta a,\delta u)$ reads
\begin{align}\label{difftrans}
 \left\{
 \begin{array}{l}
 \partial_t\delta a+u_2\cdot\nabla \delta a=I_1+I_2+I_3,\\
 \partial_t \delta a-\mathcal{A}\delta u=J_1+J_2+J_3+J_4+J_5,\\
 \delta a|_{t=0}=a_{02}-a_{01}, \delta u|_{t=0}=u_{02}-u_{01},
 \end{array}
 \right.
 \end{align}
 with  $I_1=-\delta u\cdot\nabla a_1$, $I_2=-(1+a_2){\rm div}\,\delta u$, $I_3
 =-\delta a{\rm div}\, u_1$, $J_1=-\delta u\cdot\nabla u_2$,
 $J_2=-u_1\cdot \nabla\delta u$,
 $J_3=-(I(a_2)-I(a_1))\mathcal{A} u_2$,
 $J_4=-I(a_1)\mathcal{A}\delta u$ and
 $J_5=-\nabla(G(a_2)-G(a_1))$.

$\bullet$ {\bf  Case (1);} When $a_1\in L^2_T(\dot{B}_{p,1}^{1+\frac{d}{p}})$.
Applying Lemma \ref{transport} and  Remark \ref{remark1} for $\delta a$ and  the maximal regularity of heat equation (see Corollary \ref{cor:heatm} and Remark \ref{remark1}) for $\delta u$, and choosing a positive small number $\sigma$ (which will be specified later) to combine the estimates for $\delta a $ and $\delta u$,  we have, for $0\leq t\leq T$,
\begin{align}\label{energy-0}
  \sigma\|\delta a&\|_{L^\infty_t(\dot{B}_{p,1}^{\frac{d}{p}})}
  +\|\delta u\|_{L^\infty_t(\dot{B}_{p,1}^{-1+\frac{d}{p}})}
+\|\delta u\|_{L^1_t(\dot{B}_{p,1}^{1+\frac{d}{p}})}\lesssim \sigma \|\delta a_0\|_{\dot{B}_{p,1}^{\frac{d}{p}}}
   +\|\delta a_0\|_{\dot{B}_{p,1}^{-1+\frac{d}{p}}}\no\\
   &+\int_0^t (\sigma\|u_2\|_{\dot{B}_{p,1}^{1+\frac{d}{p}}}
   \|\delta a\|_{\dot{B}_{p,1}^{\frac{d}{p}}}
 +\sigma \sum_{k=1}^3\|I_k\|_{\dot{B}_{p,1}^{\frac{d}{p}}}
 +\sum_{k=1}^5\|J_k\|_{\dot{B}_{p,1}^{-1+\frac{d}{p}}})\mathrm{d}\tau.
\end{align}
We control $I_i$ for $i=1,\,2,\,3$ by using the fact the product of two functions in $\dot{B}_{p,1}^{\frac{d}{p}}$ is also in $\dot{B}_{p,1}^{\frac{d}{p}}$, and control $J_i$ for $i=1,\,2,\,3,\,4$ by using the product estimates
$\|fg\|_{\dot{B}_{p,1}^{-1+\frac{d}{p}}}\lesssim \|f\|_{\dot{B}_{p,1}^{-1+\frac{d}{p}}}\|g\|_{\dot{B}_{p,1}^{\frac{d}{p}}}$ ( for more details, see Lemma \ref{paraproduct}, Remark \ref{remark} and Remark \ref{remark1} for $1\leq p<2d$). For $J_5$, we take advantage of Lemma ~\ref{function1}. Therefore, it can be checked that
\begin{align*} \label{I_1}\sigma\|I_1\|_{L^1_t(\dot{B}_{p,1}^{\frac{d}{p}})}&+\|J_1\|_{L^1_t(\dot{B}_{p,1}^{-1+\frac{d}{p}})}
+\|J_2\|_{L^1_t(\dot{B}_{p,1}^{-1+\frac{d}{p}})}\no\\
\lesssim &
 \int_0^t(\|\delta u\|_{\dot{B}_{p,1}^{\frac{d}{p}}}\|\nabla a_1\|_{\dot{B}_{p,1}^{\frac{d}{p}}}
 +\|\delta u\|_{\dot{B}_{p,1}^{\frac{d}{p}}}\|\nabla u_2\|_{\dot{B}_{p,1}^{-1+\frac{d}{p}}}+\|u_1\|_{\dot{B}_{p,1}^{\frac{d}{p}}}\|\nabla \delta u\|_{\dot{B}_{p,1}^{-1+\frac{d}{p}}})\mathrm{d}\tau\no\\
 \leq &
 C\int_0^t(
 \|a_1\|_{\dot{B}_{p,1}^{1+\frac{d}{p}}}^2+\|u_1\|_{\dot{B}_{p,1}^{\frac{d}{p}}}^2+
 \|u_2\|_{\dot{B}_{p,1}^{\frac{d}{p}}}^2)
    \|\delta u\|_{\dot{B}_{p,1}^{-1+\frac{d}{p}}}
    \mathrm{d}\tau+\frac{1}{4} \|u_2-u_1\|_{L^1_t(\dot{B}_{p,1}^{1+\frac{d}{p}})},
\end{align*}
which we have used the interpolation inequality and the H\"{o}lder inequality. Next, recall that $\|a_1\|_{L^\infty_T(\dot{B}_{p,1}^{\frac{d}{p}})}\leq c_{0}$. Hence, if $\sigma$ and $c_{0}$ are sufficiently small, then we have that
\begin{align*}
\sigma\|I_2\|_{L^1_t(\dot{B}_{p,1}^{\frac{d}{p}})}+&\|J_4\|_{L^1_t(\dot{B}_{p,1}^{-1+\frac{d}{p}})}\\
\lesssim &\sigma(1+ \|a_2\|_{L^\infty_t(\dot{B}_{p,1}^{\frac{d}{p}})})\|{\rm div}\,\delta u\|_{L^1_t(\dot{B}_{p,1}^{\frac{d}{p}})}+\|I(a_1)\|_{L^\infty_t(\dot{B}_{p,1}^{\frac{d}{p}})}
\|\mathcal{A} \,\delta u\|_{L^1_t(\dot{B}_{p,1}^{-1+\frac{d}{p}})}\\
\leq &C(\sigma+c_{0})\|\delta u\|_{L^1_t(\dot{B}_{p,1}^{1+\frac{d}{p}})}
\leq \frac{1}{4} \|\delta u\|_{L^1_t(\dot{B}_{p,1}^{1+\frac{d}{p}})}.
\end{align*}
Finally, the direct calculation gives
\begin{equation*}
\begin{aligned}
    &\sigma\|I_3\|_{L^1_t(\dot{B}_{p,1}^{\frac{d}{p}})}+
    \|J_3\|_{L^1_t(\dot{B}_{p,1}^{-1+\frac{d}{p}})}+
 \|J_5\|_{L^1_t(\dot{B}_{p,1}^{-1+\frac{d}{p}})}
 \\
& \  \ \lesssim
\int_{0}^{t}\|\delta a\|_{\dot{B}_{p,1}^{\frac{d}{p}}}(1+\| u_1\|_{\dot{B}_{p,1}^{1+\frac{d}{p}}} +\| u_2\|_{\dot{B}_{p,1}^{1+\frac{d}{p}}})d\tau.
\end{aligned}
\end{equation*}
 Then plugging the above estimates into the inequality \eqref{energy-0},  and applying Gronwall's inequality, we get the desired result \eqref{lodiffb}.

$\bullet$ {\bf Case (2)}: Let $a_1\in L^\infty_T(\dot{B}_{p,1}^{1+\frac{d}{p}})$ and $(\delta a_0,\delta u_0)\in \dot{B}_{p,1}^{-1+\frac{d}{p}}\times \dot{B}_{p,1}^{-2+\frac{d}{p}}$.  First, according to the assumption $(a_i,u_i)\in C([0,T];\dot{B}_{p,1}^{\frac{d}{p}})
\times \big(C([0,T];\dot{B}_{p,1}^{-1+\frac{d}{p}})\cap
L^1([0,T];\dot{B}_{p,1}^{1+\frac{d}{p}})\big)$, it can be checked that
the right-hand side terms of $\eqref{difftrans}_{1}$ and  $\eqref{difftrans}_{2}$ belong to  $L^1([0,T];{\dot{B}_{p,1}^{-1+\frac{d}{p}}})$ and $L^1([0,T];{\dot{B}_{p,1}^{-2+\frac{d}{p}}})$, respectively. Recall that  $(\delta a_0,\delta u_0)\in  \dot{B}_{p,1}^{-1+\frac{d}{p}}\times \dot{B}_{p,1}^{-2+\frac{d}{p}}$. Hence, the transport equation and the heat equation theory (see Lemma \ref{transport}, Corollary \ref{cor:heatm} and
Remark \ref{remark1}) yields
  $(\delta a,\delta u) \in C([0,T];\dot{B}_{p,1}^{-1+\frac{d}{p}})\times( C([0,T];\dot{B}_{p,1}^{-2+\frac{d}{p}})\cap L^1([0,T];\dot{B}_{p,1}^{\frac{d}{p}})).$  In
  the same manner, similar to \eqref{energy-0}, we have
  \begin{equation}\label{energy-1}
\begin{aligned}
  \sigma\|\delta &a\|_{L^\infty_t(\dot{B}_{p,1}^{-1+\frac{d}{p}})}
  +\|\delta u\|_{L^\infty_t(\dot{B}_{p,1}^{-2+\frac{d}{p}})}
+\|\delta u\|_{L^1_t(\dot{B}_{p,1}^{\frac{d}{p}})}\lesssim \sigma \|\delta a_0\|_{\dot{B}_{p,1}^{-1+\frac{d}{p}}}
   +\|\delta a_0\|_{\dot{B}_{p,1}^{-2+\frac{d}{p}}}\\
  & +\int_0^t (\sigma\|u_2\|_{\dot{B}_{p,1}^{1+\frac{d}{p}}}
   \|\delta a\|_{\dot{B}_{p,1}^{-1+\frac{d}{p}}}
 +\sigma \sum_{k=1}^3\|I_k\|_{\dot{B}_{p,1}^{-1+\frac{d}{p}}}
 +\sum_{k=1}^5\|J_k\|_{\dot{B}_{p,1}^{-2+\frac{d}{p}}})\mathrm{d}\tau.
\end{aligned}
\end{equation}
Thanks to Lemma \ref{paraproduct}, Remark \ref{remark} and Remark \ref{remark1}  (using the conditions $d\geq 3$ and $1\leq p<d$), for $I_i$ $(i=1,\, 2,\, 3)$ we use the product estimate $\|fg\|_{\dot{B}_{p,1}^{-1+\frac{d}{p}}}\lesssim \|f\|_{\dot{B}_{p,1}^{-1+\frac{d}{p}}}\|g\|_{\dot{B}_{p,1}^{\frac{d}{p}}}$; Likewise for $J_i (i=1,\, 2,\, 3,\, 4)$ we use
$\|fg\|_{\dot{B}_{p,1}^{-2+\frac{d}{p}}}\lesssim \|f\|_{\dot{B}_{p,1}^{-2+\frac{d}{p}}}\|g\|_{\dot{B}_{p,1}^{\frac{d}{p}}}$
or
$\|fg\|_{\dot{B}_{p,1}^{-2+\frac{d}{p}}}\lesssim \|f\|_{\dot{B}_{p,1}^{-1+\frac{d}{p}}}\|g\|_{\dot{B}_{p,1}^{-1+\frac{d}{p}}}$. From Remark \ref{remark1} and Lemma ~\ref{function1} with $s=-1+\frac{d}{p}>0$ (using the condition $1\leq p<d$), we estimate $J_5$.
These arguments yield, if $\sigma$ and $c_{0}$ are sufficiently small,
\begin{equation*}
\begin{aligned}
\sigma\|I_1\|_{L^1_t(\dot{B}_{p,1}^{-1+\frac{d}{p}})}
+&\sigma\|I_2\|_{L^1_t(\dot{B}_{p,1}^{-1+\frac{d}{p}})}
+\|J_4\|_{L^1_t(\dot{B}_{p,1}^{-2+\frac{d}{p}})}\\
\lesssim &(\sigma(1+ \|a_1\|_{L^\infty_T(\dot{B}_{p,1}^{\frac{d}{p}})}+\|a_2\|_{L^\infty_T(\dot{B}_{p,1}^{\frac{d}{p}})})
+\|a_1\|_{L^\infty_t(\dot{B}_{p,1}^{\frac{d}{p}})})
\|\delta u\|_{L^1_t(\dot{B}_{p,1}^{\frac{d}{p}})}\\
\leq &C(\sigma+\delta_{0})\|\delta u\|_{L^1_t(\dot{B}_{p,1}^{\frac{d}{p}})}
\leq \frac{1}{4} \|\delta u\|_{L^1_t(\dot{B}_{p,1}^{\frac{d}{p}})}
\end{aligned}
\end{equation*}
and
\begin{equation*}
\begin{aligned}
\|J_1\|_{L^1_t(\dot{B}_{p,1}^{-2+\frac{d}{p}})}+
\|J_2\|_{L^1_t(\dot{B}_{p,1}^{-2+\frac{d}{p}})}
&  \lesssim \int_0^t(\| u_2\|_{\dot{B}_{p,1}^{\frac{d}{p}}}+\|u_1\|_{\dot{B}_{p,1}^{\frac{d}{p}}})\| \delta u\|_{\dot{B}_{p,1}^{-1+\frac{d}{p}}}\mathrm{d}\tau\\
 &  \leq C\int_0^t
(\|u_1\|_{\dot{B}_{p,1}^{\frac{d}{p}}}^2+\|u_2\|_{\dot{B}_{p,1}^{\frac{d}{p}}}^2)
     \|\delta u\|_{\dot{B}_{p,1}^{\frac{d}{p}-2}}\mathrm{d}\tau
    +\frac{1}{4} \|\delta u\|_{L^1_t(\dot{B}_{p,1}^{\frac{d}{p}})}
\end{aligned}
\end{equation*}
as well as
\begin{equation*}
\begin{aligned}
\sigma\|I_3\|_{L^1_t(\dot{B}_{p,1}^{-1+\frac{d}{p}})}+&
 \|J_3\|_{L^1_t(\dot{B}_{p,1}^{-2+\frac{d}{p}})}+
 \|J_5\|_{L^1_t(\dot{B}_{p,1}^{-2+\frac{d}{p}})}\\&\lesssim
\int_{0}^{t}\|\delta a\|_{\dot{B}_{p,1}^{-1+\frac{d}{p}}}(1+\| u_1\|_{\dot{B}_{p,1}^{1+\frac{d}{p}}} +\| u_2\|_{\dot{B}_{p,1}^{1+\frac{d}{p}}})d\tau.
\end{aligned}
\end{equation*}
Substituting above estimates into \eqref{energy-1}, then applying Gronwall's inequality, we conclude that
\begin{align}\label{energy-1-result}
    \|\delta a&\|_{L^\infty_T(\dot{B}_{p,1}^{-1+\frac{d}{p}})}
   +\|\delta u\|_{L^\infty_T(\dot{B}_{p,1}^{-2+\frac{d}{p}})}+
    \|\delta u\|_{L^1_T(\dot{B}_{p,1}^{\frac{d}{p}})}
   \leq C(\|\delta a_0\|_{\dot{B}_{p,1}^{-1+\frac{d}{p}}}
   +\|\delta u_0\|_{\dot{B}_{p,1}^{-2+\frac{d}{p}}}).
\end{align}
Now go back to \eqref{energy-0} again. We only retreat $I_1$, since the other terms can be  controlled in the same way as that have been down in Case 1. Using \eqref{energy-1-result}, we see
\begin{align*}
\|I_1\|_{L^1_T(\dot{B}_{p,1}^{\frac{d}{p}})}\lesssim &\|a_1\|_{L^\infty_T(\dot{B}_{p,1}^{1+\frac{d}{p}})}\|\delta u\|_{L^1_T(\dot{B}_{p,1}^{\frac{d}{p}})}\lesssim \|a_1\|_{L^\infty_T(\dot{B}_{p,1}^{1+\frac{d}{p}})}(\|\delta a_0\|_{\dot{B}_{p,1}^{-1+\frac{d}{p}}}
   +\|\delta u_0\|_{\dot{B}_{p,1}^{-2+\frac{d}{p}}}),
\end{align*}
from which and  \eqref{energy-0}, we obtain
\begin{equation*}\label{energy-3}
\begin{aligned}
\|\delta a&\|_{L^\infty_T(\dot{B}_{p,1}^{\frac{d}{p}})}
  +\|\delta u\|_{L^\infty_T(\dot{B}_{p,1}^{-1+\frac{d}{p}})}
+\|\delta u\|_{L^1_T(\dot{B}_{p,1}^{1+\frac{d}{p}})}\lesssim  \|\delta a_0\|_{\dot{B}_{p,1}^{\frac{d}{p}}}
   +\|\delta a_0\|_{\dot{B}_{p,1}^{-1+\frac{d}{p}}}\no\\
   &+ \|a_1\|_{L^\infty_T(\dot{B}_{p,1}^{1+\frac{d}{p}})}(\|\delta a_0\|_{\dot{B}_{p,1}^{-1+\frac{d}{p}}}
   +\|\delta u_0\|_{\dot{B}_{p,1}^{-2+\frac{d}{p}}})\no\\
   &+\int_{0}^{T}\|(\delta a,\delta u)\|_{\dot{B}_{p,1}^{\frac{d}{p}}\times\dot{B}_{p,1}^{-1+\frac{d}{p}}}(1+\| u_1\|_{\dot{B}_{p,1}^{1+\frac{d}{p}}} +\| u_2\|_{\dot{B}_{p,1}^{1+\frac{d}{p}}}+\|u_1\|_{\dot{B}_{p,1}^{\frac{d}{p}}}^2+
\|u_2\|_{\dot{B}_{p,1}^{\frac{d}{p}}}^2)d\tau.
 \end{aligned}
\end{equation*}
Applying the Gronwall inequality, we eventually obtain the desired result \eqref{lodiffc}. This completes the proof of Proposition~\ref{diff-1-local}.
\end{proof}

\noindent {\bf Proof of Theorem~\ref{apelwp}.}
Let $(a_0,u_0) \in \dot{B}_{p,1}^{\frac{d}{p}}\times\dot{B}_{p,1}^{-1+\frac{d}{p}}$ and  $\|a_{0}\|_{\dot{B}_{p,1}^{\frac{d}{p}}}\leq c$ with  $c$ small enough.
According to the local existence theory (see ~\cite{D2014,RH2015}), the Cauchy problem \eqref{eulercauchy} has unique solution $(a,u):=
S_{T}(a_0,u_0)\in\mathbb{E}_{uni}$. Following from the same arguments with Step 1 of proving Theorem~\ref{lwp}.
Then there exists a neighbourhood $U$ of $(a_{0}, u_{0})$ in $\mathbb{X}_p$ and
a constant $\eta>0$ and a uniform time $T: =T(U)>0$ such that for any $(\tilde{a}_0,\tilde{u}_0)\in U$ with  
\begin{equation}\label{Diffau}
\begin{aligned}
\|(\tilde{a_0},\tilde{u_0})-(a_0,u_0)\|_{\dot{B}_{p,1}^{\frac{d}{p}}\times \dot{B}_{p,1}^{-1+\frac{d}{p}}}\leq \eta,
\end{aligned}
\end{equation} 
the solution $(\tilde{a},\tilde{u}):=
S_{T}(\tilde{a}_0,\tilde{u}_0)\in\mathbb{E}_{uni}$ also exists and is unique. In the following, we shall show the continuous dependence on the data to the compressible Navier-Stokes equations for $1 \leq p < d$ using the Bona-Smith approximation.
The proof is divided  into three steps: \\

{\bf  Step 1. Persistence estimate. }
Let  $$(a_{N}, u_{N}):=S_{T}(P_{\leq N}a_0, P_{\leq N}u_0),  (\tilde{a}_{N}, \tilde{u}_{N}):=S_{T}(P_{\leq N}\tilde{a}_0, P_{\leq N}\tilde{u}_0)$$ be solutions of compressible Navier-Stokes equation \eqref{eulercauchy} with the initial data $(P_{\leq N}a_0, P_{\leq N}u_0)$ and $(P_{\leq N}\tilde{a}_0, P_{\leq N}\tilde{u}_0)$, respectively. Then the arguments above imply that $(a_{N}, u_{N})\in\mathbb{E}_{uni}$ and  $(\tilde{a}_{N}, \tilde{u}_{N})\in\mathbb{E}_{uni}$.

Thanks to Lemma \ref{paraproduct}, Remark \ref{remark}, Lemma \ref{transport} and Remark \ref{remark1}, we get
\begin{equation}\label{AA2.6}
\begin{aligned}
\|a_{N}\|_{L_{T}^{\infty}(\dot{B}_{p,1}^{1+\frac{d}{p}})}&\lesssim
\|P_{\leq N}a_0\|_{\dot{B}_{p,1}^{1+\frac{d}{p}}}+\|(1+a_{N}){\rm div}\, u_{N}\|_{L_{T}^{1}(\dot{B}_{p,1}^{1+\frac{d}{p}})}\\
&\   \   \  +\int_{0}^{T}\|\nabla u_{N}\|_{ \dot{B}_{p,1}
^{\frac{d}{p}}}\|a_{N}\|_{\dot{B}_{p,1}^{1+\frac{d}{p}}}dt\\
&\lesssim
\|P_{\leq N}a_0\|_{\dot{B}_{p,1}^{1+\frac{d}{p}}}+(1+\|a_{N}\|_{L_{T}^{\infty}\dot{B}_{p,1}
^{\frac{d}{p}}})
\int_{0}^{T}\|u_{N}\|_{\dot{B}_{p,1}^{2+\frac{d}{p}}}dt
\\
&\   \   \  +\int_{0}^{T}\|\nabla u_{N}\|_{ \dot{B}_{p,1}
^{\frac{d}{p}}}\|a_{N}(\tau)\|_{\dot{B}_{p,1}^{1+\frac{d}{p}}}dt.
\end{aligned}
\end{equation}
Using the maximal regularity estimates for the heat equation (see Corollary \ref{cor:heatm}), Lemma \ref{function1}, Remark \ref{remark} and Remark \ref{remark1}, we end up with
\begin{equation}\label{A2.11}
\begin{aligned}
\|u_{N}\|_{L_{T}^{\infty}(\dot{B}_{p,1}^{\frac{d}{p}})}&+\int_{0}^{T}
\|u_{N}\|_{\dot{B}_{p,1}^{2+\frac{d}{p}}}dt
  \leq C\|P_{\leq N}u_0\|_{\dot{B}_{p,1}^{\frac{d}{p}}}+C\int_{0}^{T}\|
u_{N}\|_{\dot{B}_{p,1}^{\frac{d}{p}}}\|\nabla u_{N}\|_{\dot{B}_{p,1}^{\frac{d}{p}}}dt\\
& +C\int_{0}^{T}\|a_{N}\|_{
\dot{B}_{p,1}^{1+\frac{d}{p}}}dt +
C\|a_{N}\|_{L_{T}^{\infty}\dot{B}_{p,1}^{\frac{d}{p}}}
\int_{0}^{T}\|u_{N}\|_{\dot{B}_{p,1}^{2+\frac{d}{p}}}dt.\\
\end{aligned}
\end{equation}
We  multiply \eqref{AA2.6} by $\eta_{0}$ and add it to \eqref{A2.11},
 \begin{equation*}
\begin{aligned}
&\eta_{0}\|a_{N}\|_{L_{T}^{\infty}(\dot{B}_{p,1}^{1+\frac{d}{p}})}+
\|u_{N}\|_{L_{T}^{\infty}(\dot{B}_{p,1}^{\frac{d}{p}})}+\int_{0}^{T}
\|u_{N}\|_{\dot{B}_{p,1}^{2+\frac{d}{p}}}dt
\\
&
\leq C\|P_{\leq N}u_0\|_{\dot{B}_{p,1}^{\frac{d}{p}}}+\int_{0}^{T}(( C\eta_{0}+1)\|\nabla u_{N}\|_{ \dot{B}_{p,1}
^{\frac{d}{p}}}+1)(\|a_{N}\|_{
\dot{B}_{p,1}^{1+\frac{d}{p}}}+\|u_{N}\|_{\dot{B}_{p,1}^{\frac{d}{p}}})dt\\
&\   \   \   \  +\eta_{0}\|P_{\leq N}a_0\|_{\dot{B}_{p,1}^{1+\frac{d}{p}}}+C(\eta_{0}+(\eta_{0}+1)\|a_{N}\|_{L_{T}^{\infty}\dot{B}_{p,1}
^{\frac{d}{p}}})
\int_{0}^{T}\|u_{N}\|_{\dot{B}_{p,1}^{2+\frac{d}{p}}}dt,\\
\end{aligned}
\end{equation*}
here we can choose $\eta_{0}$ small enough. We thus deduce from $(a_{N}, u_{N})\in\mathbb{E}_{uni}$ that $C\eta_{0}+C(\eta_{0}+1)\|a_{N}\|_{L_{T}^{\infty}\dot{B}_{p,1}^{\frac{d}{p}}}\leq\frac{1}{2}$. Then we have
 \begin{equation*}
\begin{aligned}
&\eta_{0}\|a_{N}\|_{L_{T}^{\infty}(\dot{B}_{p,1}^{1+\frac{d}{p}})}
+\|u_{N}\|_{L_{T}^{\infty}(\dot{B}_{p,1}^{\frac{d}{p}})}+\frac{1}{2}\int_{0}^{T}
\|u_{N}\|_{\dot{B}_{p,1}^{2+\frac{d}{p}}}dt
\\
&
\leq C\|P_{\leq N}u_{0}\|_{\dot{B}_{p,1}^{\frac{d}{p}}}+\eta_{0}\|\|P_{\leq N}a_{0}\|_{\dot{B}_{p,1}^{1+\frac{d}{p}}}
\\
&
 \qquad +\int_{0}^{T}((C\eta_{0} +1)\|\nabla u_{N}\|_{ \dot{B}_{p,1}
^{\frac{d}{p}}}+1)(\|a_{N}\|_{
\dot{B}_{p,1}^{1+\frac{d}{p}}}+\|u_{N}\|_{\dot{B}_{p,1}^{\frac{d}{p}}})dt.\\
\end{aligned}
\end{equation*}
Applying the Gronwall inequality and the uniform boundedness of $(a_{N}, u_{N})$ implies that
 \begin{equation}\label{unau}
\begin{aligned}
&\|a_{N}\|_{L_{T}^{\infty}(\dot{B}_{p,1}^{1+\frac{d}{p}})}+\|u_{N}\|_{L_{T}^{\infty}(\dot{B}_{p,1}^{\frac{d}{p}})}+\int_{0}^{T}
\|u_{N}\|_{\dot{B}_{p,1}^{2+\frac{d}{p}}}dt
\\
& \ \ \
\lesssim(\|P_{\leq N}u_{0}\|_{\dot{B}_{p,1}^{\frac{d}{p}}}+\|P_{\leq N}a_{0}\|_{\dot{B}_{p,1}^{1+\frac{d}{p}}})\exp\{\int_{0}^{T}(\|\nabla u_{N}\|_{ \dot{B}_{p,1}
^{\frac{d}{p}}}+1)dt\}
\\
& \ \ \
\leq C(\|P_{\leq N}u_{0}\|_{\dot{B}_{p,1}^{\frac{d}{p}}}+\|P_{\leq N}a_{0}\|_{\dot{B}_{p,1}^{1+\frac{d}{p}}})\leq C2^{N}(\|u_{0}\|_{\dot{B}_{p,1}^{-1+\frac{d}{p}}}+\|a_{0}\|_{\dot{B}_{p,1}^{\frac{d}{p}}}).
\end{aligned}
\end{equation}

 {\bf  Step 2. }
 Using  uniform estimate $(a_{N}, u_{N})\in\mathbb{E}_{uni}$, we infer
 \begin{equation}\label{anu2}
\begin{aligned}
\int_{0}^{T}\|a_{N}\|_{\dot{B}_{p,1}^
   {1+\frac{d}{p}}}^2\mathrm{d}t\lesssim 2^{2N},
  \end{aligned}
 \end{equation}
 from which and  Proposition~\ref{diff-1-local} (case 1) and \eqref{anu2}, we have the following uniform difference estimate
 \begin{equation*}
\begin{aligned}
&\|S_T(P_{\leq N}a_{0},\,P_{\leq N}u_{0})-S_T(P_{\leq N}\tilde{a}_{0},P_{\leq N}\tilde{u}_{0})\|_{Z_p(T)}\\
& \ \ \ \lesssim\exp\big\{\int_0^T\|a_N(\tau)\|_{\dot{B}_{p,1}^
   {1+\frac{d}{p}}}^2\mathrm{d}t\big\}(\|P_{\leq N}\tilde{a}_{0}-P_{\leq N}a_{0}\|_{\dot{B}_{p,1}^{\frac{d}{p}}}
+\|P_{\leq N}\tilde{u}_{0}-P_{\leq N}u_{0}\|_{\dot{B}_{p,1}^{-1+\frac{d}{p}}})
   \\
   & \ \ \ \leq C\exp\big\{C2^{2N}\big\}(\|\tilde{a}_{0}-a_{0}\|_{\dot{B}_{p,1}^{\frac{d}{p}}}
   +\|\tilde{u}_{0}-u_{0}\|_{\dot{B}_{p,1}^{-1+\frac{d}{p}}}).
   \end{aligned}
 \end{equation*}
 Meanwhile, according to Proposition ~\ref{diff-1-local} (case 2) and the uniform estimate \eqref{unau}, we also can get
\begin{align*}
    &\|S_T(a_0, u_{0})-S_T(P_{\leq N}a_0, P_{\leq N}u_0)\|_{Z_p(T)}\\
   & \  \ \ \lesssim \|a_0-P_{\leq N}a_0\|_{\dot{B}_{p,1}^{\frac{d}{p}}}
   +\|u_0-P_{\leq N}a_0\|_{\dot{B}_{p,1}^{-1+\frac{d}{p}}}\\& \qquad +\|a_{N}\|_{L^\infty_T(\dot{B}_{p,1}^{1+\frac{d}{p}})}(\|a_0-P_{\leq N}a_0\|_{\dot{B}_{p,1}^{-1+\frac{d}{p}}}
   +\|u_0-P_{\leq N}u_0\|_{\dot{B}_{p,1}^{-2+\frac{d}{p}}})
   \\
   & \  \ \ \lesssim \|a_0-P_{\leq N}a_0\|_{\dot{B}_{p,1}^{\frac{d}{p}}}
   +\|u_0-P_{\leq N}u_0\|_{\dot{B}_{p,1}^{-1+\frac{d}{p}}}\\& \qquad +2^{N}\|a_0-P_{\leq N}a_0\|_{\dot{B}_{p,1}^{-1+\frac{d}{p}}}
   +2^{N}\|u_0-P_{\leq N}u_0\|_{\dot{B}_{p,1}^{-2+\frac{d}{p}}}\\
   & \  \ \ \leq C\big(\|a_0-P_{\leq N}a_0\|_{\dot{B}_{p,1}^{\frac{d}{p}}}
   +\|u_0-P_{\leq N}u_0\|_{\dot{B}_{p,1}^{-1+\frac{d}{p}}}\big),
  \end{align*}
and similarly, we have
   \begin{align*}
\|S_T(\tilde{a}_0, \tilde{u}_{0})-S_T(P_{\leq N}\tilde{a}_0, P_{\leq N}\tilde{u}_0)\|_{Z_p(T)}\leq C\big(\|\tilde{a}_0-P_{\leq N}\tilde{a}_0\|_{\dot{B}_{p,1}^{\frac{d}{p}}}
   +\|\tilde{u}_0-P_{\leq N}\tilde{u}_0\|_{\dot{B}_{p,1}^{-1+\frac{d}{p}}}\big).
  \end{align*}

 {\bf  Step 3.} Based on the above estimates, we shall demonstrate the continuity of the solution map. We find
\begin{equation}\label{au00}
\begin{split}
& \|S_{T}(a_0,u_0) -S_{T}(\tilde{a}_0, \tilde{u}_0 )\|_{Z_p(T)}\\
& \ \ \  \leq \|S_{T}(a_0,u_0) -S_{T}(P_{\leq N}a_0,P_{\leq N}u_0)\|_{Z_p(T)}
 \\
& \ \ \   \ \ \ +
\|S_{T}(\tilde{a}_0, \tilde{u}_{0}) -S_{T}(P_{\leq N}\tilde{a}_0, P_{\leq N}\tilde{u}_0)\|_{Z_p(T)}\\
& \qquad+ \|S_{T}(P_{\leq N}a_0, P_{\leq N}u_0) -S_{T}(P_{\leq N}\tilde{a}_0, P_{\leq N}\tilde{u}_0)\|_{Z_p(T)} \\
 & \ \ \  \leq C\big( \|(u_0 -P_{\leq N}u_0,\tilde{u}_0 -P_{\leq N}\tilde{u}_0)\|_{{\dot{B}}_{p,1}^{-1+\frac{d}{p}}} + \|(a_0-P_{\leq N}a_0,\tilde{a}_0-P_{\leq N}\tilde{a}_0)\|_{{\dot{B}}_{p,1}^{\frac{d}{p}}} \big)  \\
& \ \ \   \ \ \ +C\exp\big\{C2^{2N}\big\}(\|a_{0}-\tilde{a}_{0}\|_{
\dot{B}_{p,1}^{\frac{d}{p}}}
   +\|u_{0}-\tilde{u}_{0}\|_{\dot{B}_{p,1}^{-1+\frac{d}{p}}})\\
   & \ \ \  := C\|(a_0 -P_{\leq N}a_0, u_0 -P_{\leq N}u_0)\|_{\mathbb{X}_p} 
   +C\|(\tilde{a}_0 -P_{\leq N}\tilde{a}_0, \tilde{u}_0 -P_{\leq N}\tilde{u}_0)\|_{\mathbb{X}_p}\\
& \ \ \   \ \ \ +C\exp\big\{C2^{2N}\big\}(\|(a_{0}-\tilde{a}_{0}, u_{0}-\tilde{u}_{0} )\|_{\mathbb{X}_{p}},
\end{split}
\end{equation}
in the above inequality, we used the results of Step 2.  Given that
$1 \leq p <d$, for an arbitrary
$\varepsilon>0$, we can select
$N$ to be sufficiently large such that
\begin{equation}\label{au0}
\begin{split}
 C \|(a_0 -P_{\leq N}a_0, u_0 -P_{\leq N}u_0)\|_{\mathbb{X}_p} \leq \frac{\varepsilon}{8}.  \\
\end{split}
\end{equation}
Fixing $N$, we can choose  $\tilde{\delta}_{0}\leq\min\{\eta,\frac{\varepsilon}{4}\}$ (here $\eta$ comes from \eqref{Diffau}) to be sufficiently small such that
$ C\|(a_0 -\tilde{a}_{0}, u_{0}-\tilde{u}_{0})\|_{\mathbb{X}_{p}}\leq\tilde{\delta}_{0}$ and 
\begin{equation}\label{au1}
\begin{aligned}
C\|(a_0 -\tilde{a}_{0}, u_{0}-\tilde{u}_{0})\|_{\mathbb{X}_{p}}\exp\big\{C2^{2N}\big\}\leq\frac{\varepsilon}{2}.
\end{aligned}
\end{equation}
Next, fixing $N$ and $\tilde{\delta}_{0}$, we obtain 
\begin{equation}\label{au2}
\begin{aligned}
&C \|(\tilde{a_0 }-P_{\leq N}\tilde{a}_0,\tilde{u}_0 -P_{\leq N}\tilde{u}_0)\|_{\mathbb{X}_p}\\
& \  \  \leq C \|(\tilde{a}_0 -a_0,\tilde{u}_0 -u_0)\|_{\mathbb{X}_p}+C \|(a_0 -P_{\leq N}a_0,  u_0 -P_{\leq N}u_0)\|_{\mathbb{X}_p}\\
& \  \  \  \ +C\|(P_{\leq N}a_0 -P_{\leq N}\tilde{a}_0, P_{\leq N}u_0 -P_{\leq N}\tilde{u}_0)\|_{\mathbb{X}_p}\\
& \  \  \leq C \|(\tilde{a}_0 -a_0,\tilde{u}_0 -u_0)\|_{\mathbb{X}_p}+C \|(a_0 -P_{\leq N}a_0,  u_0 -P_{\leq N}u_0)\|_{\mathbb{X}_p} \leq \frac{3}{8}\varepsilon.\\
\end{aligned}
\end{equation}
Consequently, plugging estimates \eqref{au0}, \eqref{au1} and \eqref{au2} into \eqref{au00}, we have
\[
 \|S_{T}(a_0,u_{0}) -S_{T}(\tilde{a}_0, \tilde{u}_0 )\|_{Z_p(T)}\leq\varepsilon,
\]
which concludes the desired results as stated in Theorem~\ref{apelwp}.


\subsection*{Acknowledgements}
Z. Guo is the recipient of an Australian Research Council Future Fellowship (project number FT230100588) funded by the Australian Government. Yang is supported by the National Natural Science Foundation of China, 12161041. Zhang is supported by the National Natural Science Foundation of China, 11801425.

\footnotesize

\end{document}